\newtheorem{theorem}{Theorem}[section]
\newtheorem{proposition}[theorem]{Proposition}
\newtheorem{lemma}[theorem]{Lemma}
\newtheorem{corollary}[theorem]{Corollary}
\newtheorem{remark}[theorem]{Remark}
\def\R{{\mathbb R}}
\def\Z{{\mathbb Z}}
\def\E{{\mathbb E}}
\def\P{{\mathbb P}}
\def\eps{\varepsilon}
\numberwithin{equation}{section}
\begin{document}

\title[Local times of anisotropic Gaussian random fields]
{Local times of anisotropic Gaussian random fields and stochastic heat equation}
\author{Cheuk Yin Lee \and Yimin Xiao}

\keywords{Local times, Gaussian random fields, anisotropy, stochastic heat equation, strong local nondeterminism, Hausdorff measure, 
Voronoi partition, Besicovitch's covering theorem}

\subjclass[2010]{60G15, 60G60, 60G17}
% Gaussian processes, random fields, sample path properties

\begin{abstract}
We study the local times of a large class of Gaussian random fields satisfying strong local nondeterminism with respect to an anisotropic metric.
We establish moment estimates and H\"{o}lder conditions for the local times of the Gaussian random fields. Our key estimates rely on geometric 
properties of Voronoi partitions with respect to an anisotropic metric and the use of Besicovitch's covering theorem. As a consequence, we 
deduce sample path properties of the Gaussian random fields that are related to Chung's law of the iterated logarithm and modulus of non-differentiability.
Moreover, we apply our results to systems of stochastic heat equations with additive Gaussian noise and determine the exact Hausdorff 
measure function with respect to the parabolic metric for the level sets of the solutions.
\end{abstract}

\maketitle

\section{Introduction}

The objective of this paper is to study the regularities of local times and related sample path properties of a large class of
Gaussian random fields that satisfy a property of strong local nondeterminism with respect to an anisotropic metric. 
The solution of systems 
of stochastic heat equations with additive Gaussian noise is an important example of these Gaussian random fields.

We consider a mean zero, continuous Gaussian random field $X = \{X(t), t \in \R^N\}$ with values in $\R^d$ defined by 
\begin{equation}\label{def:X}
X(t) = (X_1(t), \dots, X_d(t)),
\end{equation}
where $X_1, \dots, X_d$ are i.i.d.~copies of a real-valued Gaussian field $Y = \{ Y(t), t \in \R^N \}$.

Throughout this paper, let $T$ denote a closed (or open) bounded interval (rectangle) in $\R^N$.
We say that $X$ satisfies condition (A) on $T$ if there exists a constant vector $H = (H_1, \dots, H_N) \in (0, 1)^N$ 
such that the following conditions (A1) and (A2) hold.
\begin{itemize}
\item[(A1)]
There exist constants $0 < C_1 \le C_2 < \infty$ such that
\begin{equation*}\label{Eq:metric_rho}
C_1 \rho^2(t, s) \le \E[(Y(t) - Y(s))^2] \le C_2 \rho^2(t, s)
\end{equation*}
for all $t, s \in T$, where $\rho$ is the anisotropic metric defined by
\begin{align}\label{D:rho}
\rho(t, s) = \sum_{j=1}^N |t_j - s_j|^{H_j}.
\end{align}
\item[(A2)]
There exists a constant $C_3 > 0$ such that 
for all integers $n \ge 1$ and all $t, t^1, \dots, t^n \in T$,
\begin{equation*}\label{Eq:SLND_rho}
\mathrm{Var}(Y(t)|Y(t^1), \dots, Y(t^n)) \ge C_3 \min_{0 \le k \le n} \rho^2(t, t^k),
\end{equation*}
where $t^0 = 0$.
\end{itemize}

Condition (A2) is called the strong local nondeterminism in metric $\rho$.
The concept of local nondeterminism (LND) was first introduced by Berman \cite{B73} to study the existence 
and joint continuity of the local times of Gaussian processes. Berman's LND property was extended by Pitt \cite{P78} 
to  Gaussian random fields and  by Cuzick \cite{Cuzick78} to local
$\phi$-nondeterminism for an arbitrary positive function $\phi$. Moreover, strong LND and other forms of LND have 
been developed and they are useful for studying various sample path properties of Gaussian random fields (see, e.g., 
\cite{X06,X08} and references therein for more information).

It was proved by Pitt \cite{P78} that the multiparameter fractional Brownian motion with Hurst index $H \in (0, 1)$ 
satisfies strong LND with $\rho(t, s) = |t - s|^H$. The H\"{o}lder conditions for the local times and the Hausdorff 
measure of the level sets of Gaussian fields with stationary increments and strong LND were studied by Xiao \cite{X97}.
Similar results were obtained by Ayache et al.~\cite{AWX08} for fractional Brownian sheets, and by Wu and Xiao 
\cite{WX11} and Lee \cite{Lee22a, Lee22b} for other anisotropic Gaussian fields satisfying the property of
sectorial local nondeterminism. This latter property is weaker than the above property (A2) and was first
proved by Khoshnevisan and Xiao \cite{KX07} for the Brownian sheet, by Wu and Xiao \cite{WX07}
for fractional Brownian sheets, and Lee and Xiao \cite{LeeX19} for the solution of stochastic wave equations with 
Gaussian noise which is white in time and colored in space.

Conditions (A1) and (A2) above were considered in Xiao \cite{X09} to study various properties of sample paths 
and local times. In \cite{X09}, moment estimates for the local times on Euclidean balls were proved, but those 
estimates were not sharp enough to obtain exact regularity results and sharp sample path properties. 
In the current paper, we consider the local times of $X$ on anisotropic balls and strengthen the results by proving 
sharp moment estimates as well as  local and global H\"older conditions for the local times. In the case of 
anisotropic balls, the powers appearing in our bounds are different from those in \cite{X09, WX11,Lee22a, Lee22b}. 

In \cite{X09, LX12}, sufficient conditions for Gaussian random fields with stationary increments to satisfy condition (A) 
were obtained in terms of their spectral densities or spectral measures. One of our motivations is to provide a more
general framework for studying Gaussian random fields without stationary increments. We remark that the solution
$u= \{u(t, x), t\ge 0, x \in \R^N\}$ of a system of stochastic heat equations with additive Gaussian noise does not 
have stationary increments in the time-space variable $(t, x)$. For another example of Gaussian random fields 
without stationary increments that satisfies (A), see Lee and Xiao \cite{LeeX21}. In Section 5, we show that the solution
of a system of stochastic heat equations with additive Gaussian noise satisfies condition (A) as well as the 
conditions in Dalang et al. \cite{DMX17}.
Hence our results are  applicable to $u= \{u(t, x), t\ge 0, x \in \R^N\}$. Furthermore, we obtain a precise result 
regrading the Hausdorff measure function (in the metric $\rho$) for the level sets of  $u= \{u(t, x), t\ge 0, x \in \R^N\}$. 

The novelty of the current paper is that we employ two geometric tools to obtain our key estimates.
First, for any given points $t^1, \dots, t^n \in S \subset T$,  in order to estimate integrals of the form
\begin{align}\label{E:Intro_int}
\int_S \left[ \min_{1\le l \le n} \rho(t, t^l) \right]^{-\beta} dt,
\end{align}
 we consider the Voronoi partition
\begin{align*}
\Gamma_l = \left\{ t \in S : \tilde\rho(t, t^l) \le \tilde\rho(t, t^j) \text{ for all } j \in \{1, \dots, n\} \right\}, \quad l = 1, \dots, n,
\end{align*}
of $S$ generated by these fixed points, where $\tilde\rho(t, s) = \max_{1 \le j \le N}|t_j-s_j|^{H_j}$ is an anisotropic 
metric which is equivalent to $\rho$. We study the geometric properties of the Voronoi partition and prove that each 
$\Gamma_l$ (which contains $t^l$) satisfies an anisotropic star shape property at the point $t^l$ (see Lemma \ref{lem:Gamma}).
Our integral estimate for \eqref{E:Intro_int} hinges on this crucial geometric property of the Voronoi partition.
We refer to, e.g., \cite{K89, AKL13} for general theory, properties, and applications of Voronoi partitions and diagrams.

Secondly, as in \cite{X97}, finding moment estimates for the increments of the local times relies on estimates for the integrals
\begin{align*}
\int_{S^n} \prod_{j=1}^n \left[ \min_{0 \le l \le j-1} \rho(t^j, t^l)\right]^{-d} 
\prod_{j=1}^n \left[ \min_{0 \le l \le n, l \ne j} \rho(t^j, t^l)\right]^{-\gamma}  dt^1 \cdots dt^n.
\end{align*}
We point out that there was a gap in the proof of Lemma 2.5 in \cite{X97}, namely, the second inequality in (2.20) on page 140 
in \cite{X97} may not hold in general if $N \ge 2$. The missing part that is needed in order to complete the proof is the following: 
There exists a constant $K = K_N$ depending only on $N$ such that for arbitrary $n$ given points $ t^1, \dots, t^n$ and 
$0 \le i \le n$, we have
%we need a universal bound (but not on $n$, $i$ or the points) for the quantity
\begin{align*}
\# \big(\left\{ j \in \{1, \dots, n\} : \tilde\rho(t^j, t^i) \le \tilde\rho(t^j, t^l) \text{ for all } l \in \{0, 1, \dots, n\} \setminus \{j\} \right\}
\big) \le K_N,
\end{align*}
where $\# (J)$ denotes the cardinality of the set $J$ and, again, $t^0 = 0$. This universal bound is established in Lemma \ref{Lem:K}. 
Our idea is to relate this problem to covering points with 
balls in metric $\tilde\rho$ and then apply a version of  Besicovitch's covering theorem adapted to comparable intervals 
(see Lemma \ref{Lem:cov_thm}). This allows us to fill the gap in the proof of \cite{X97} and give a complete proof for 
the moment estimates for the increments of local times.

The rest of the paper is organized as follows. In Section \ref{sect:cont},  first we establish some auxiliary lemmas including 
Lemmas 2.2 and 2.6 about the anisotropic star shape property for the Voronoi partitions with respect to an anisotropic metric 
and a universal bound involving Besicovitch's covering theorem, respectively.
%which may be of independent interest. Specifically, Lemma 2.2 proves an anisotropic star shape property 
%for the Voronoi partitions with respect to an anisotropic metric and Lemma 2.6 provides a universal bound involving 
%Besicovitch's covering theorem. 
Then, we use these tools to derive sharp moment estimates for the local times 
in Proposition 2.4 and moment estimates for the increments of the local times in Proposition 2.8.
In Section \ref{sect:Holder_cond}, we use the moment estimates in Section \ref{sect:cont} to prove Theorem \ref{Th:Holder} which concerns local and global H\"{o}lder conditions for the local times, and deduce Theorem \ref{T:sample} which provides lower bounds for Chung's law of the iterated logarithm and the modulus of non-differentiability for the Gaussian random field.
Then in Section \ref{sect:level_set}, we discuss the Hausdorff dimension and Hausdorff measure of the level sets.
Finally in Section \ref{sect:SHE}, we apply the results to the solution of a system of stochastic heat equations and 
determine the exact Hausdorff measure function of the level sets with respect to the parabolic metric.

Throughout this paper, we let $C$ denote a constant whose value may be different in each appearance, and 
$C_1, C_2, K_1, K_2, \dots$ denote specific constants. We let $\lambda_N$ denote the Lebesgue measure on 
$\R^N$, and for any Borel set $S \subset \R^N$, let $\mathscr{B}(S)$ denote the $\sigma$-algebra of Borel subsets
of $S$. For $a \in \R^N$ and $r > 0$, let $B_\rho(a, r) = \{ t \in \R^N : \rho(t, a) \le r \}$ be the closed ball centered 
at $a$ with radius $r$ in the metric $\rho$. We denote a finite sequence of points in $\R^N$ by $t^1, t^2, \dots, t^n$, 
and for a given $k \in \{1, \dots, n\}$, the coordinates of the point $t^k$ are written as $t^k = (t^k_1, t^k_2, \dots, t^k_N)$.

\smallskip

\section{Moment estimates for the local times}
\label{sect:cont}

In this section, we study the joint continuity of local times for Gaussian random fields satisfying condition (A).
Let us recall the definition and properties of local times.
Let $X(t)$ be a vector field on $\R^N$ with values in $\R^d$ and $S \in \mathscr{B}(\R^N)$.
The occupation measure of $X$ on $S$ is the Borel measure on $\R^d$ defined by
\[\mu_S(B) = \lambda_N\{ t \in S : X(t) \in B \}, \quad B \in \mathscr{B}(\R^d).\]
We say that $X$ has a \emph{local time} on $S$ if the occupation measure $\mu_S$ is absolutely continuous 
with respect to the Lebesgue measure $\lambda_d$. In this case, the local time of $X$ is defined as the 
Radon--Nikodym derivative:
\[ L(x, S) = \frac{d\mu_S}{d\lambda_d}(x). \]
Note that if $X$ has a local time on $S$, then it also has a local time on any Borel set $A \subset S$.

By Theorem 6.3 of Geman and Horowitz \cite{GH80}, when the local time exists on $S$, one can choose a 
version of the local time, still denoted by $L(x, S)$, which is a kernel in the sense that
\begin{enumerate}
\item[(i)] $L(\cdot, A)$ is $\mathscr{B}(\R^d)$-measurable for each fixed $A \in \mathscr{B}(S)$; and
\item[(ii)] $L(x, \cdot)$ is a Borel measure on $\mathscr{B}(S)$ for each fixed $x \in \R^d$.
\end{enumerate}
Moreover, by Theorem 6.4 of \cite{GH80}, $L(x, S)$ satisfies the following \emph{occupation density formula}: 
for any nonnegative Borel function $f$ on $\R^d$,
\begin{equation}\label{Eq:ODF}
\int_S f(X(t)) \,dt = \int_{\R^d} f(x) L(x, S) \, dx.
\end{equation}

Let $T = \prod_{j=1}^N[\tau_j, \tau_j+h_j]$ be a compact interval, where $h_j > 0$ for $j = 1, \dots, N$.
We say that the local time is \emph{jointly continuous} on $T$ if we can find a version of the local time such 
that a.s.~$L\big(x, \prod_{j=1}^N [\tau_j, \tau_j + s_j] \big)$ is jointly continuous in all variables $(x, s)$ in $\R^d 
\times \prod_{j=1}^N[0, h_j]$.
Throughout this paper, we will always use a jointly continuous version of the local time whenever it exists.
When a local time is jointly continuous, it can be uniquely extended to a kernel and if, in addition, $X$ is 
continuous, $L(x, \cdot)$ defines a Borel measure supported on the level set $X^{-1}(x) \cap T = \{t \in T : 
X(t) = x \}$; see \cite[p.12, Remark (c)]{GH80} or \cite[Theorem 8.6.1]{Adler}.

Let $X$ be a Gaussian random field defined by \eqref{def:X}.
If condition (A1) is satisfied on $T$, then by Theorem 8.1 of Xiao \cite{X09}, $X$ has a local time on $T$ with 
$L(\cdot, T) \in L^2(\lambda_d \times \P)$ if any only if $d < \sum_{j=1}^N (1/H_j)$. Moreover, when the 
latter condition holds, the local time has the following representation in $L^2(\lambda_d \times \P)$: for 
any $S \in \mathscr{B}(T)$,
\begin{equation}\label{Eq:local_time_rep}
L(x, S) = (2\pi)^{-d} \int_{\R^d} du\, e^{-i\langle u, x \rangle} \int_S dt\, e^{i\langle u, X(t) \rangle}.
\end{equation}
It follows that  for any integer $n \ge 1$ and any $x \in \R^d$,
\begin{equation}\label{Eq:local_time_mom}
\E[L(x, S)^n] = (2\pi)^{-nd} \int_{\R^{nd}} d\bar{u} \int_{S^n} d\bar{t} \, e^{-i\sum_{j=1}^n \langle u^j, x\rangle}\, 
\E\left[e^{i\sum_{j=1}^n\langle u^j, X(t^j) \rangle}\right]
\end{equation}
and for any even integer $n \ge 2$ and $x, y \in \R^d$,
\begin{align}
\begin{aligned}\label{Eq:local_time_incre_mom}
\E[(L(x, S) - L(y, S))^n]
= (2\pi)^{-nd}  \int_{\R^{nd}} d\bar{u} \int_{S^n} d\bar{t}
\prod_{j=1}^n \left[e^{-i\langle u^j, x\rangle} - e^{-i\langle u^j, y\rangle}\right] \E\left[e^{i\sum_{l=1}^n\langle u^l, X(t^l)\rangle}\right],
\end{aligned}
\end{align}
where $\bar{u} = (u^1, \dots, u^n)$ and $\bar{t} = (t^1, \dots, t^n)$. See, e.g., \cite[\S 25]{GH80}.

In fact, under condition (A), the condition $d < \sum_{j=1}^N (1/H_j)$  implies not only the existence of local times but also  
their joint continuity. This has been proved in \cite[Theorem 8.2]{X09}:

\begin{theorem}\label{Th:joint_cont_LT}
Suppose $X$ satisfies condition \textup{(A)} on $T$ and $d < \sum_{j=1}^N (1/H_j)$. Then $X$ has a jointly 
continuous local time on $T$.
\end{theorem}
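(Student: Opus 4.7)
The plan is to deduce joint continuity from two sharp $L^n$-moment estimates --- one for $L(x, I)$ on small anisotropic sub-rectangles $I \subset T$, and one for the $x$-increments $L(x, I) - L(y, I)$ --- and then combine them via a multiparameter Kolmogorov continuity theorem applied to the random field $(x, s) \mapsto L\bigl(x, \prod_{j=1}^N [\tau_j, \tau_j + s_j]\bigr)$ on $\R^d \times \prod_{j=1}^N [0, h_j]$.

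For the first bound, I would start from the Fourier representation \eqref{Eq:local_time_mom}. Since $X_1, \ldots, X_d$ are i.i.d.~copies of $Y$, the Gaussian characteristic function factorizes as $\exp\bigl(-\tfrac{1}{2}\sum_{k=1}^d \mathrm{Var}(\sum_j u_k^j Y(t^j))\bigr)$. After relabelling the $t^j$ so that condition (A2) can be applied sequentially, SLND yields a lower bound of the form $\mathrm{Var}\bigl(\sum_j u_k^j Y(t^j)\bigr) \ge C_3 \sum_j (u_k^j)^2 \min_{0 \le l < j} \rho^2(t^j, t^l)$. Performing the resulting Gaussian integral in $\bar u$ reduces the problem to estimating
\[
\int_{I^n} \prod_{j=1}^n \Bigl[\min_{0 \le l \le j-1} \rho(t^j, t^l)\Bigr]^{-d}\, d\bar t,
\]
which is handled by the Voronoi partition estimate from Lemma 2.2 and produces the sharp scaling announced in Proposition 2.4.

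For the $x$-increments I would apply \eqref{Eq:local_time_incre_mom} together with the elementary inequality $|e^{-i\langle u^j, x\rangle} - e^{-i\langle u^j, y\rangle}| \le 2^{1-\gamma}|u^j|^\gamma |x - y|^\gamma$ for some $\gamma \in (0, 1)$. Choosing $\gamma$ small enough that the extra $|u^j|^\gamma$ factors remain integrable against the Gaussian lower bound, the same Voronoi argument, now also invoking the universal Besicovitch bound of Lemma 2.6 to control the extra minima appearing over all $l \ne j$, yields a bound of the form $\E\bigl[|L(x, I) - L(y, I)|^n\bigr] \le C^n (n!)^a |x - y|^{n\gamma} \prod_{j=1}^N h_j^{n\beta_j}$ for appropriate constants, which is the content of Proposition 2.8.

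Finally, the standard Geman--Horowitz trick writes the difference of $L(x, \prod_{j=1}^N[\tau_j, \tau_j + s_j])$ between two corner points $s, s'$ as a signed sum of $2^N$ local times on sub-rectangles, so the moment bound on $L(x, I)$ controls continuity in the $s$-variable while the increment bound controls continuity in $x$. A multiparameter Kolmogorov continuity theorem on $\R^d \times \prod_{j=1}^N [0, h_j]$ then produces the jointly continuous modification, provided $n$ is chosen large enough that the moment exponents strictly exceed the effective dimension in each of the $N + d$ directions. The main obstacle is simultaneously tuning $\gamma$ and $n$ so that the Kolmogorov criterion holds in every direction; this succeeds precisely because the hypothesis $d < \sum_{j=1}^N 1/H_j$, combined with the sharp exponents $n\beta_j$ produced by the Voronoi/Besicovitch analysis, leaves enough slack.
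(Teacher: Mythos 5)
Your overall strategy is the right one and matches how this theorem is actually established: the paper itself does not reprove Theorem \ref{Th:joint_cont_LT} but cites \cite[Theorem 8.2]{X09}, whose proof is exactly ``moment estimates for $L(x,I)$ and its $x$-increments on small rectangles, then a multiparameter Kolmogorov continuity theorem''; your version simply substitutes the sharper anisotropic estimates of Propositions \ref{Prop:mom_LT} and \ref{Prop:mom_LT_incre}, which is legitimate and is in fact what the authors implicitly do in the proof of Theorem \ref{Thm:Holder_LT}. The chaining of the two bounds through the Geman--Horowitz decomposition into $2^N$ sub-rectangles, and the observation that $d<Q$ gives the slack $1-d/Q>0$ needed to beat the parameter dimension by taking $n$ large, are all correct.

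One intermediate step, however, is false as stated: condition (A2) does \emph{not} yield
$\mathrm{Var}\bigl(\sum_j u_k^j Y(t^j)\bigr) \ge C_3 \sum_j (u_k^j)^2 \min_{0\le l<j}\rho^2(t^j,t^l)$
with a constant independent of $n$. (Already for Brownian motion on $[1,2]$, the second difference $B(t^1)-2B(t^2)+B(t^3)$ with $t^{i+1}-t^i=\eps$ has variance $2\eps$, while the right-hand side stays bounded below because of the $j=1$ term.) The only coordinatewise lower bound that conditioning gives you is for the \emph{last} innovation, $\mathrm{Var}(\sum_j u_j Z_j)\ge u_n^2\,\mathrm{Var}(Z_n\mid Z_1,\dots,Z_{n-1})$, which is not enough to kill the Gaussian integral in all $n$ variables. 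The correct route --- and the one the paper takes in Proposition \ref{Prop:mom_LT} --- is to perform the $\bar u$-integral exactly, obtaining $(2\pi)^{n/2}\bigl[\det\mathrm{Cov}(Y(t^1),\dots,Y(t^n))\bigr]^{-1/2}$ per coordinate, and only then apply the factorization \eqref{Eq:Cov_formula} of the determinant into successive conditional variances, each of which is bounded below via (A2). This lands you on the same integral $\int_{I^n}\prod_j\bigl[\min_{0\le l\le j-1}\rho(t^j,t^l)\bigr]^{-d}d\bar t$ that you wrote, so the error is local and repairable, but the inequality you used to get there should be removed. The analogous point applies to the increment estimate, where the extra factors $|u^j_{k_j}|^\gamma$ are handled by the generalized H\"older inequality and Lemma \ref{Lem:CD82} rather than by a diagonal lower bound on the variance.
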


The proof of Theorem \ref{Th:joint_cont_LT} in \cite{X09} is based on moment estimates of the local times on the 
Euclidean balls and a multiparameter version of Kolmogorov's continuity theorem. However, those moment estimates 
are not sharp enough to deduce optimal H\"older conditions for the local times and the exact Hausdorff measure of the 
level sets.

The goal of this section is to prove sharp moment estimates for the local times on anisotropic balls. 
These estimates are provided in Propositions \ref{Prop:mom_LT} and \ref{Prop:mom_LT_incre} below.
We extend the method of Xiao \cite[Lemma 2.5]{X97}. 

From now on, we denote $Q :=  \sum_{j=1}^N (1/H_j)$. This quantity arises naturally in characterizing some fractal 
properties of the Gaussian random field $X$ that satisfies condition \textup{(A)} (see, e.g., \cite{X09}). It will appear in 
most of the theorems and lemmas in the present paper. 

To facilitate some of our arguments, let us define the metric
\begin{align}\label{D:rho_tilde}
\tilde{\rho}(t, s) = \max_{1 \le j \le N} |t_j - s_j|^{H_j}, \quad t, s \in \R^N,
\end{align}
which is equivalent to the metric $\rho$ defined in \eqref{D:rho}.
Indeed, we have
\begin{equation}\label{Eq:equiv_metric}
\tilde{\rho}(t,s) \le \rho(t, s) \le N \tilde{\rho}(t, s) \quad \text{for all } t, s \in \R^N.
\end{equation}
Let us begin with the following lemma, which is concerned with the geometric properties of the Voronoi partition 
generated by $m$ given points with respect to the anisotropic metric $\tilde\rho$.

\begin{lemma}\label{lem:Gamma}
Fix $m$ distinct points $t^1, \dots, t^m \in \R^N$.
For $l = 1, \dots, m$, define
\begin{align}\label{D:Gamma}
\Gamma_l = \left\{ t \in \R^N : \tilde\rho(t, t^l) = \min_{1 \le k \le m} \tilde\rho(t, t^k) \right\}.
\end{align}
Then the following properties hold:
\begin{enumerate}
\item[(i)] $\R^N = \bigcup_{l=1}^m \Gamma_l$ and $\lambda_N(\Gamma_l \cap \Gamma_{l'}) = 0$ whenever $l \ne l'$.
\item[(ii)] $\Gamma_l$ satisfies the following anisotropic star shape property at the point $t^l$:
\begin{align}\label{E:star}
t \in \Gamma_l \quad \text{implies} \quad t^l + \varepsilon^E (t-t^l) \in \Gamma_l\, \text{ for all }\varepsilon \in (0, 1),
\end{align}
where $E$ is the diagonal matrix $\operatorname{diag}(1/H_1, \dots, 1/H_N)$.
\end{enumerate}
\end{lemma}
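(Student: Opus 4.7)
The plan is to handle parts (i) and (ii) independently, with the technical core in (ii).

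For (i), the covering $\R^N=\bigcup_{l=1}^m\Gamma_l$ is immediate since $\min_{1\le k\le m}\tilde\rho(t,t^k)$ over a finite set is always attained. For the null overlap, the bisector $\{t:\tilde\rho(t,t^l)=\tilde\rho(t,t^{l'})\}$ can be stratified according to which coordinates $j,k$ realize the respective maxima; on each stratum the defining relation reduces to $|t_j-t^l_j|^{H_j}=|t_k-t^{l'}_k|^{H_k}$, which (after projecting onto a coordinate in which $t^l$ and $t^{l'}$ differ, so as to discard the trivial stratum) defines a codimension-one set, hence is $\lambda_N$-null.

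For (ii), set $M=\tilde\rho(t,t^l)$ and $s=t^l+\varepsilon^E(t-t^l)$, so that $s_j-t^l_j=\varepsilon^{1/H_j}(t_j-t^l_j)$. The crucial observation is that $\varepsilon^E$ is precisely the homothety adapted to $\tilde\rho$: raising to the $H_j$-th power cancels $\varepsilon^{1/H_j}$, giving $\tilde\rho(s,t^l)=\varepsilon M$. It therefore suffices to show $\tilde\rho(s,t^k)\ge\varepsilon M$ for every $k$. Fix such a $k$ and pick a coordinate $j$ realizing $|t_j-t^k_j|^{H_j}=\tilde\rho(t,t^k)$; since $t\in\Gamma_l$ one has $|t_j-t^k_j|^{H_j}\ge M$, while by definition of $M$, $|t_j-t^l_j|^{H_j}\le M$. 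Writing $a=t^l_j-t^k_j$, $b=t_j-t^l_j$ and $\alpha=\varepsilon^{1/H_j}\in(0,1)$, so that $s_j-t^k_j=a+\alpha b$ and $t_j-t^k_j=a+b$, the above inequalities translate to $|b|\le M^{1/H_j}\le|a+b|$.

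The proof then rests on the elementary bound
\[ |a+\alpha b|\ge\alpha\,|a+b|\quad\text{whenever }|b|\le|a+b|\text{ and }\alpha\in[0,1], \]
which follows from the decomposition $a+\alpha b=(a+b)-(1-\alpha)b$ and the triangle inequality, since $(1-\alpha)|b|\le(1-\alpha)|a+b|$. Applying this and raising to the $H_j$-th power gives $|s_j-t^k_j|^{H_j}\ge\varepsilon M$, hence $\tilde\rho(s,t^k)\ge\varepsilon M$ and $s\in\Gamma_l$. The principal subtlety is precisely this sign-case analysis: since $b$ and $a+b$ may have opposite signs, a naive bound fails, and the correct argument hinges on the comparison $|b|\le|a+b|$ delivered by the $\Gamma_l$-hypothesis.
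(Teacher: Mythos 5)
Your argument for part (ii) is correct, and it is essentially the paper's proof in a tidier package: the paper reduces to $m=2$, normalizes $t^2$ to the positive orthant relative to $t$, and then splits into the two sign cases $t^1_1\ge t_1$ and $t^1_1<t_1$, whereas your single inequality $|a+\alpha b|\ge \alpha|a+b|$ under $|b|\le|a+b|$, obtained from $a+\alpha b=(a+b)-(1-\alpha)b$, absorbs both cases at once. The reductions ($\tilde\rho(s,t^l)=\varepsilon M$ by the scaling $\varepsilon^E$, then checking one well-chosen coordinate $j$ for each competitor $t^k$) match the paper's, so this part is a correct and slightly cleaner rendition of the same idea.

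Part (i) contains a genuine gap, and the parenthetical fix does not repair it. Your stratification of the bisector $\{t:\tilde\rho(t,t^l)=\tilde\rho(t,t^{l'})\}$ by the maximizing coordinates $(j,k)$ works when $j\ne k$, and when $j=k$ with $t^l_j\ne t^{l'}_j$ (a hyperplane); but on the stratum where $j=k$ and $t^l_j=t^{l'}_j$ the ``defining relation'' is vacuous, and ``projecting onto a coordinate in which $t^l$ and $t^{l'}$ differ'' produces no constraint at all, since equality of the two maxima says nothing about the other coordinates there. That stratum can have positive measure: take $N=2$, $t^l=(0,0)$, $t^{l'}=(0,c)$ with $c\ne 0$; on the open set $\{t:|t_1|^{H_1}>\max(|t_2|^{H_2},|t_2-c|^{H_2})\}$ both distances equal $|t_1|^{H_1}$, so for $m=2$ the set $\Gamma_l\cap\Gamma_{l'}$ has positive Lebesgue measure and claim (i) actually fails. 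You are in good company: the paper's own proof has the identical defect, since its displayed sum contains the term with $i=j$ and $t^l_i=t^{l'}_i$, which equals $\lambda_N(\R^N)$ rather than $0$. The statement, and your argument for it, become correct under the genericity hypothesis that no two of the generating points agree in any coordinate; this holds for Lebesgue-a.e.\ tuple of integration variables, which is all that the downstream integral estimate (Lemma \ref{Lem:Bd_int}, where the identity $\sum_l\lambda_N(\Gamma_l)=\lambda_N(S)$ is used) requires. If you want the lemma for arbitrary distinct points, you must either add that hypothesis explicitly or weaken (i) to the covering statement and rework the later step.
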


\begin{proof}
(i). It is clear that the union of all $\Gamma_l$'s is $\R^N$. For $l \ne l'$, we have
\begin{align*}
\lambda_N(\Gamma_l \cap \Gamma_{l'}) &\le \lambda_N\left\{ t \in \R^N : \tilde\rho(t, t^l) = \tilde\rho(t, t^{l'})\right\}\\
& \le \sum_{i=1}^N \sum_{j=1}^N \lambda_N\left\{ t \in \R^N : |t_i-t^l_i|^{H_i} = |t_j - t^{l'}_j|^{H_j} \right\} = 0.
\end{align*}
(ii). It suffices to prove the property \eqref{E:star} for $l = 1$. Also, since
\begin{align*}
\Gamma_1 = \bigcap_{k=2}^m \left\{ t \in \R^N : \tilde\rho(t, t^1) \le \tilde\rho(t, t^k) \right\},
\end{align*}
we can further reduce the proof to the case $l = 1$ and $m = 2$.
With this reduction in mind, we assume $t \in \Gamma_1$, i.e.,
\begin{align}\label{E:t_in_Gamma}
\tilde\rho(t, t^1) \le \tilde\rho(t, t^2),
\end{align}
and aim to show that for all $\varepsilon \in (0, 1)$, the point $s = s(\varepsilon) := t^1 + \varepsilon^E (t-t^1)$ is in $\Gamma_1$, 
i.e.,
\begin{align}\label{E:s_in_Gamma}
\tilde\rho(s, t^1) \le \tilde\rho(s, t^2).
\end{align}
The points $t$, $t^1$, and $t^2$ may have different configurations.
Since only their relative positions are relevant to us, we may assume without loss of generality that $t^2$ is in the 
positive orthant relative to $t$, namely, $t^2_j \ge t_j$ for all $j \in \{ 1, \dots, N\}$; see Figure \ref{fig1}.
By the definition of the metric $\tilde\rho$ in \eqref{D:rho_tilde}, we have $\tilde\rho(t, t^2) = |t_j-t^2_j|^{H_j}$ for some 
$j \in \{1, \dots, N\}$. For simplicity, we assume that $j = 1$, i.e.,
\begin{align}\label{E:rho_t_t2}
\tilde\rho(t, t^2) = |t_1 - t^2_1|^{H_1} = (t^2_1 - t_1)^{H_1}
\end{align}
since the proof below also works for the other cases $j \ne 1$ in exactly the same way.
Note that
\begin{align}\label{E:rho_s_t1}
\tilde\rho(s, t^1) = \max_{1 \le j \le N} |t^1_j + \varepsilon^{1/H_j}(t_j-t^1_j) - t^1_j|^{H_j} = \varepsilon \tilde\rho(t, t^1)
\end{align}
and
\begin{align}\label{E:rho_s_t2}\begin{split}
\tilde\rho(s, t^2) \ge |s_1 + t^2_1|^{H_1}
& = |t^1_1+\varepsilon^{1/H_1}(t_1-t^1_1) - t^2_1|^{H_1}\\
& = |t_1-t^2_1 + (1-\varepsilon^{1/H_1}) (t^1_1 - t_1)|^{H_1}\\
& = |t^2_1 - t_1|^{H_1} \left| 1 - (1-\varepsilon^{1/H_1}) \frac{t^1_1-t_1}{t^2_1-t_1} \right|^{H_1}.
\end{split}\end{align}
In order to show \eqref{E:s_in_Gamma}, we consider two cases: (1) $t_1^1 \ge t_1$, and (2) $t_1^1 < t_1$; see Figure \ref{fig1}.

\begin{figure}
\begin{tikzpicture}
\draw [dashed] (0,-1) -- (0,3);
\draw [dashed] (-1,0) -- (4,0);
\draw [fill] (0, 0) circle [radius=.05] node [below left]{$t$};
\draw [fill] (3.3, 1.4) circle [radius=.05] node [above right]{$t^2$};
\draw [dashed] (3.3,1.4) -- (3.3,0);
\draw [dashed] (3.3,1.4) -- (0,1.4);
\draw [fill] (1.5, 2) circle [radius=.05] node [above]{$t^1$};
\draw [dashed] (1.5,2) -- (1.5,0);
\draw [dashed] (1.5,2) -- (0,2);
\end{tikzpicture}\qquad
\begin{tikzpicture}
\draw [dashed] (0,-1) -- (0,3);
\draw [dashed] (-2.5,0) -- (4,0);
\draw [fill] (0, 0) circle [radius=.05] node [below left]{$t$};
\draw [fill] (3.3, 1.4) circle [radius=.05] node [above right]{$t^2$};
\draw [dashed] (3.3,1.4) -- (3.3,0);
\draw [dashed] (3.3,1.4) -- (0,1.4);
\draw [fill] (-1.5, 2) circle [radius=.05] node [above]{$t^1$};
\draw [dashed] (-1.5,2) -- (-1.5,0);
\draw [dashed] (-1.5,2) -- (0,2);
\end{tikzpicture}
\caption{Two cases of configuration: (1) $t_1^1 \ge t_1$ (left), and (2) $t^1_1 < t_1$ (right)}\label{fig1}
\end{figure}
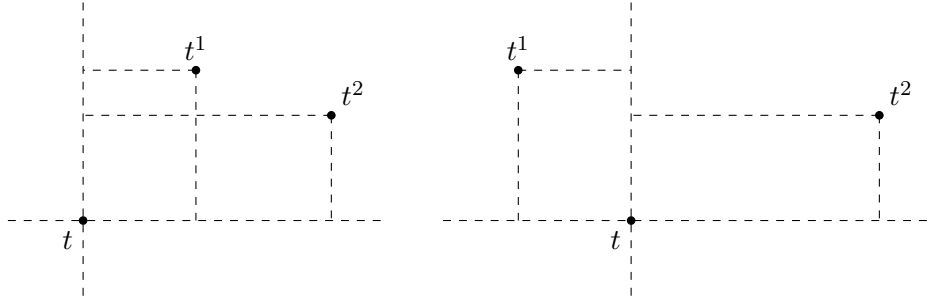

{\bf Case (1):} $t_1^1 \ge t_1$. 
By \eqref{E:t_in_Gamma} and \eqref{E:rho_t_t2}, $(t^1_1-t_1)^{H_1} \le \tilde\rho(t, t^1) \le \tilde\rho(t, t^2) = (t^2_1-t_1)^{H_1}$.
It follows that
\begin{align*}
0 \le \frac{t^1_1-t_1}{t^2_1-t_1} \le 1 \quad \text{and hence} \quad
\varepsilon^{1/H_1} \le 1 - (1-\varepsilon^{1/H_1}) \frac{t^1_1-t_1}{t^2_1-t_1} \le 1.
\end{align*}
This together with \eqref{E:rho_s_t2}, \eqref{E:rho_t_t2}, \eqref{E:t_in_Gamma} and \eqref{E:rho_s_t1} implies that
\begin{align*}
\tilde\rho(s, t^2) \ge \varepsilon|t^2_1-t_1|^{H_1} = \varepsilon\tilde\rho(t, t^2) 
\ge \varepsilon\tilde\rho(t, t^1) = \tilde\rho(s, t^1).
\end{align*}

{\bf Case (2):} $t^1_1 < t_1$. In this case, we have $t^2_1 - t_1 \ge 0$. It follows that
\begin{align*}
\frac{t^1_1-t_1}{t^2_1-t_1} < 0 \quad \text{and hence}  \quad
1 - (1-\varepsilon^{1/H_1}) \frac{t^1_1-t_1}{t^2_1-t_1} > 1 \ge \varepsilon^{1/H_1}.
\end{align*}
Again, this together with \eqref{E:rho_s_t2}, \eqref{E:rho_t_t2}, \eqref{E:t_in_Gamma} and \eqref{E:rho_s_t1} 
implies that $\tilde\rho(s, t^2) \ge \tilde\rho(s, t^1)$. This proves \eqref{E:s_in_Gamma} in both cases and finishes 
the proof of \eqref{E:star}.
\end{proof}

The next lemma is an integral estimate which will be useful later in the moment estimates of the local times.

\begin{lemma}\label{Lem:Bd_int}
Let $T \subset \R^N$ be any compact interval. Suppose that $0 < d \le \beta_0 < Q :=  \sum_{j=1}^N (1/H_j)$.
Then there is a finite constant $C = C(N, H, Q, \beta_0)$ such that for all intervals $S$ in $T$,  $\beta 
\in [d, \beta_0]$, integers $m \ge 2$, and distinct points $t^1, \dots, t^{m-1} \in S$, we have
\begin{equation}\label{Eq:Bd_int_Leb}
 \int_S \left[ \min_{0\le k \le m-1} \rho(t, t^k) \right]^{-\beta} dt \le C m^{\beta/Q} \lambda_N(S)^{1-\beta/Q}.
\end{equation}
In particular, for all $a \in \R^N$, $0 < r < 1$ and distinct $t^1, \dots, t^{m-1} \in B_\rho(a, r) \subset T$, we have
\begin{equation}\label{Eq:Bd_int}
 \int_{B_\rho(a, r)} \left[ \min_{0\le k \le m-1} \rho(t, t^k) \right]^{-\beta} dt \le C m^{\beta/Q} r^{Q-\beta}.
\end{equation}
\end{lemma}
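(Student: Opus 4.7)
The plan is to reduce to an integral with respect to the equivalent metric $\tilde\rho$, decompose the domain via the Voronoi partition of Lemma \ref{lem:Gamma}, prove a uniform per-cell bound, and combine by concavity. Since $\tilde\rho(t, t^k) \le \rho(t, t^k)$, it suffices to prove \eqref{Eq:Bd_int_Leb} with $\rho$ replaced by $\tilde\rho$. Relabel the $m$ points $\{0, t^1, \dots, t^{m-1}\}$ as $s^1, \dots, s^m$ (passing to the distinct points if necessary, which only decreases the exponent factor) and apply Lemma \ref{lem:Gamma} to obtain the cells $\Gamma_l$, trimmed to a genuine Borel partition via part (i). On $\Gamma_l$ one has $\min_{1 \le k \le m} \tilde\rho(t, s^k) = \tilde\rho(t, s^l)$, so
$$\int_S \bigl[\min_{1 \le k \le m} \tilde\rho(t, s^k)\bigr]^{-\beta} dt = \sum_{l=1}^m \int_{\Gamma_l \cap S} \tilde\rho(t, s^l)^{-\beta} dt.$$

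The heart of the argument is the per-cell estimate
$$\int_{\Gamma_l \cap S} \tilde\rho(t, s^l)^{-\beta} dt \le C\, V_l^{1 - \beta/Q}, \qquad V_l := \lambda_N(\Gamma_l \cap S),$$
with $C = C(N, Q, \beta_0)$. I would obtain it from the layer-cake identity
$$\int_{\Gamma_l \cap S} \tilde\rho(t, s^l)^{-\beta} dt = \beta \int_0^\infty r^{-\beta-1} \lambda_N\bigl((\Gamma_l \cap S) \cap B_{\tilde\rho}(s^l, r)\bigr) dr$$
combined with the two elementary bounds $\lambda_N\bigl((\Gamma_l \cap S) \cap B_{\tilde\rho}(s^l, r)\bigr) \le V_l$ and $\le \lambda_N(B_{\tilde\rho}(s^l, r)) = 2^N r^Q$, the latter since the $\tilde\rho$-ball is the box $\prod_j [s^l_j - r^{1/H_j}, s^l_j + r^{1/H_j}]$. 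Splitting the $r$-integral at the threshold $r^\ast := (V_l/2^N)^{1/Q}$ where the two bounds balance, and using $0 < \beta \le \beta_0 < Q$ to control convergence at both ends, a direct calculation yields the claimed $C\, V_l^{1-\beta/Q}$. A more geometric variant, closer in spirit to the paper's emphasis, uses the star-shape property of Lemma \ref{lem:Gamma}(ii) to parametrize $\Gamma_l$ in anisotropic polar coordinates around $s^l$ as $\{r \le R_l(\theta)\}$ and applies H\"older's inequality between the radial $r^{Q-\beta}$ and $r^Q$ integrals to recover the same per-cell bound.

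Summing over $l$ and invoking the concavity of $x \mapsto x^{1-\beta/Q}$ (valid since $0 < 1 - \beta/Q < 1$) gives
$$\sum_{l=1}^m V_l^{1-\beta/Q} \le m^{\beta/Q}\Bigl(\sum_{l=1}^m V_l\Bigr)^{1-\beta/Q} \le m^{\beta/Q}\, \lambda_N(S)^{1-\beta/Q},$$
which is \eqref{Eq:Bd_int_Leb}. For the specialization \eqref{Eq:Bd_int}, taking $S = B_\rho(a, r)$ and using $B_\rho(a, r) \subset B_{\tilde\rho}(a, r)$, so $\lambda_N(B_\rho(a, r)) \le 2^N r^Q$, turns the right-hand side into $C\, m^{\beta/Q}\, r^{Q-\beta}$. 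I expect the per-cell estimate to be the main technical obstacle; the hypothesis $\beta < Q$ is used exactly once, to force convergence of the radial integral at the origin, and is precisely the source of the exponent $1 - \beta/Q$.
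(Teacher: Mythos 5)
Your proof is correct, and your primary route differs from the paper's in an interesting way. The paper also reduces to $\tilde\rho$ and decomposes $S$ by the Voronoi cells, but it obtains the per-cell bound by passing to anisotropic spherical coordinates $t = t^l + h^E\Psi(\theta)$: the star-shape property of Lemma \ref{lem:Gamma}(ii) (together with the fact that $S$ is a closed interval) guarantees that each radial section of $\Gamma_l$ is an interval $(0, h_l(\theta)]$, after which the identity $\lambda_N(\Gamma_l) = \frac{c_{N,H}}{Q}\int_A [h_l(\theta)]^Q\,\sigma(d\theta)$ and Jensen's inequality in the angular variable yield $\int_{\Gamma_l}\tilde\rho(t,t^l)^{-\beta}dt \le C\,\lambda_N(\Gamma_l)^{1-\beta/Q}$. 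Your layer-cake argument reaches the same per-cell bound using only $\lambda_N\bigl((\Gamma_l\cap S)\cap B_{\tilde\rho}(s^l,r)\bigr) \le \min\{V_l,\,2^N r^Q\}$ and the split at $r^\ast = (V_l/2^N)^{1/Q}$; a short computation gives the constant $\frac{Q\,2^{N\beta/Q}}{Q-\beta} \le \frac{2^N Q}{Q-\beta_0}$, uniform in $\beta\in[d,\beta_0]$ as required. This is more elementary: it makes Lemma \ref{lem:Gamma}(ii) unnecessary for this lemma (only part (i) is needed, to get $\sum_l V_l = \lambda_N(S)$), and it works verbatim for an arbitrary Borel set $S$, whereas the paper's radial-section argument genuinely uses that $S$ is a closed interval so that $\Gamma_l\cap S$ stays anisotropically star-shaped about $t^l$. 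The final Jensen step over $l$ and the specialization to the ball are the same in both arguments; for the latter, note only that one should integrate over the interval $B_{\tilde\rho}(a,r)\supset B_\rho(a,r)$ rather than literally take $S=B_\rho(a,r)$ (which is not an interval), exactly as your inclusion suggests.
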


\begin{proof}
Let $I$ denote the integral on the left-hand side of \eqref{Eq:Bd_int_Leb}.
Since $\lambda_N(S) = \lambda_N(\overline{S})$ for all intervals $S \subset T$, we may assume that $S$ is a closed interval.
We consider the Voronoi partition $\{\Gamma_l\}_{l=0}^{m-1}$ of $S$ generated by the points $t^0, t^1, \dots, t^{m-1}$ 
with respect to the metric $\tilde\rho$ defined in \eqref{D:rho_tilde}, i.e.,
\[\Gamma_l = \left\{ t \in S : \tilde\rho(t, t^l) = \min_{0 \le k \le m-1} \tilde\rho(t, t^k) \right\}, \quad l = 0, 1, \dots, m-1.\]
By \eqref{Eq:equiv_metric} and $S = \bigcup_{l=0}^{m-1} \Gamma_l$ (see Lemma \ref{lem:Gamma}), we have
\begin{align*}
I & \le  \int_S \left[ \min_{0\le k \le m-1} \tilde\rho(t, t^k) \right]^{-\beta} dt\\
&\le \sum_{l = 0}^{m-1} \int_{\Gamma_l} [\tilde\rho(t, t^l)]^{-\beta} dt
\le N^\beta \sum_{l = 0}^{m-1} \int_{\R^N} \Bigg(\sum_{j=1}^N|t_j - t^l_j|^{H_j}\Bigg)^{-\beta} {\bf 1}_{\Gamma_l}(t) \, dt.
\end{align*}
Fix $l \in \{0, 1, \dots, m-1\}$.
Let us consider the following anisotropic spherical coordinates $\Gamma_l$, namely, we let $t = t^l + h^E\Psi(\theta)$, 
where $E$ is the diagonal matrix $\operatorname{diag}(1/H_1, \dots, 1/H_N)$ and 
\begin{align*}
\Psi(\theta) = (\Psi_1(\theta), \dots, \Psi_N(\theta))^T: A \to \R^N, \quad \theta \in A := [0, 2\pi] \times [0, \pi]^{N-2},
\end{align*}
is defined by
\begin{align*}
\begin{cases}
\Psi_1(\theta) = [\cos(\theta_1)]^{2/H_1},\\
\Psi_2(\theta) = [\sin(\theta_1) \cos(\theta_2)]^{2/H_2},\\
\quad \vdots\\
\Psi_{N-1}(\theta) = [\sin(\theta_1) \dots \sin(\theta_{N-2}) \cos(\theta_{N-1})]^{2/H_{N-1}},\\
\Psi_N(\theta) = [\sin(\theta_1) \dots \sin(\theta_{N-2}) \sin(\theta_{N-1})]^{2/H_N}.
\end{cases}
\end{align*}
Here, $[x]^p := x |x|^{p-1}$ for any $x \in \R$.
The Jacobian $J_l$ for this transformation is well-defined and $|\det J_l| = h^{Q-1} \varphi(\theta)$, where $\varphi$ is a bounded 
nonnegative function. Under this change of coordinates, we have
\begin{align*}
\int_{\R^N} \Bigg(\sum_{j=1}^N|t_j - t^l_j|^{H_j}\Bigg)^{-\beta} {\bf 1}_{\Gamma_l}(t) \, dt
= \int_A d\theta \, \varphi(\theta) \int_0^\infty h^{Q-1-\beta} \, {\bf 1}_{\Gamma_l}(t^l+h^E\Psi(\theta))\, dh.
\end{align*}
The anisotropic star shape property of $\Gamma_l$ from Lemma \ref{lem:Gamma} shows that for any fixed $\theta \in A$, 
\begin{align*}
t^l + h^E \Psi(\theta) \in \Gamma_l \quad \text{implies} \quad t^l + (\varepsilon h)^E \Psi(\theta) \in \Gamma_l \text{ for all } 
\varepsilon \in (0, 1).
\end{align*}
Here, we have also used the fact that $S$ is a closed interval.
It follows that there exists $h_l(\theta) > 0$ such that ${\bf 1}_{\Gamma_l}(t^l+h^E\Psi(\theta)) = {\bf 1}_{(0, h_l(\theta)]}(h)$ and hence
\begin{align*}
I & \le N^\beta \sum_{l=0}^{m-1} \int_A d\theta \, \varphi(\theta) \int_0^{h_l(\theta)} h^{Q-1-\beta} dh = \frac{N^\beta}{Q-\beta} 
\sum_{l=0}^{m-1} \int_A [h_l(\theta)]^{Q-\beta} \varphi(\theta)\,d\theta.
\end{align*}
By the same variables change, the Lebesgue measure of $\Gamma_l$ can be computed as follows:
\begin{align}
\begin{aligned}\label{Eq:Leb_Gamma}
\lambda_N(\Gamma_l) 
= \int_A d\theta \, \varphi(\theta) \int_0^{h_l(\theta)} h^{Q-1} dh
= \frac{c_{N, H}}{Q} \int_A [h_l(\theta)]^Q \sigma(d\theta),
\end{aligned}
\end{align}
where the constant $c_{N, H} = \int_A \varphi(\theta) d\theta$ depends only on $N$ and $H$, and $\sigma(d\theta) = c_{N, H}^{-1}
 \varphi(\theta) d\theta$ is a probability measure on $A$.
Since $0 < \beta < Q$, the function $x \mapsto x^{1-\beta/Q}$ is concave on the interval $[0, \infty)$.
Then, we can apply Jensen's inequality for the probability measure $\sigma$ and the relation \eqref{Eq:Leb_Gamma} to obtain
\begin{align*}
I & \le \frac{c_{N, H} N^{\beta}}{Q-\beta} \sum_{l=0}^{m-1} \int_A \left\{[h_l(\theta)]^Q\right\}^{1-\beta/Q} \sigma(d\theta)\\
& \le \frac{c_{N, H} N^{\beta_0}}{Q-\beta_0}\sum_{l=0}^{m-1} \left\{\int_A [h_l(\theta)]^Q  \sigma(d\theta)\right\}^{1-\beta/Q}\\
& = \frac{c_{N, H} N^{\beta_0}}{Q-\beta_0}\sum_{l=0}^{m-1} \left\{ \frac{Q}{c_{N, H}} \lambda(\Gamma_l) \right\}^{1-\beta/Q}\\
& = C \sum_{l=0}^{m-1}  {\lambda(\Gamma_l)}^{1-\beta/Q},
\end{align*}
where $C$ is a constant that depends only on $N, H, Q$ and $\beta_0$. Then, by applying Jensen's inequality for the uniform 
probability measure on $\{0, 1, \dots, m-1\}$ and part (i) of Lemma \ref{lem:Gamma}, we get that
\begin{align*}
I & \le C m \left\{ \frac{1}{m} \sum_{l=0}^{m-1} \lambda(\Gamma_l) \right\}^{1-\beta/Q} = C m^{\beta/Q} \lambda(S)^{1-\beta/Q}.
\end{align*}
This proves \eqref{Eq:Bd_int_Leb}.
Finally, observe from \eqref{Eq:equiv_metric} that $B_\rho(a, r) \subset B_{\tilde\rho}(a, r)$ and the latter is a closed interval.
Hence, \eqref{Eq:Bd_int} follows from \eqref{Eq:Bd_int_Leb} by taking $S =  B_{\tilde\rho}(a, r)$ and using the fact that 
$\lambda_N(B_{\tilde\rho}(a, r)) = 2^N r^Q$.
\end{proof}

For any Gaussian vector $(Z_1, \dots, Z_n)$, let $\mathrm{Cov}(Z_1, \dots, Z_n)$ denote its covariance matrix. Recall that 
the determinant of this matrix can be evaluated by using the following formula (see e.g., \cite[Corollary A.2]{KM20}):
\begin{equation}\label{Eq:Cov_formula}
\det \mathrm{Cov}(Z_1, \dots, Z_n) = \mathrm{Var}(Z_1) \prod_{m=1}^n \mathrm{Var}(Z_m|Z_1, \dots, Z_{m-1}).
\end{equation}
The following proposition provides moment estimates for the local time.  

\begin{proposition}\label{Prop:mom_LT}
Suppose $X$ satisfies condition \textup{(A)} on $T$ and $d < Q$.  Then there exists a finite constant $C$ such that for all 
intervals $S$ in $T$, for all $x \in \R^d$ 
and all integers $n \ge 1$, we have%\footnote{\textcolor{blue}{ The upper bound may not be sharp. The exponent 
%of $r$ may not match the Hausdorff dimension of the level set.}}
\[ \E[L(x, S)^n] \le C^n (n!)^{d/Q} \lambda_N(S)^{n(1-d/Q)}. \]
In particular, for all $a \in T$ and $r \in (0, 1)$ with $B_\rho(a, r) \subset T$, we have
\[ \E[L(x, B_\rho(a, r))^n] \le C^n (n!)^{d/Q} r^{n(Q-d)}. \]
\end{proposition}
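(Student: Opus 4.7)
The plan is to begin from the Fourier representation of the moments in \eqref{Eq:local_time_mom}. Using $|e^{-i\langle u^j,x\rangle}|\le 1$ and the fact that $X_1,\dots,X_d$ are i.i.d.\ copies of $Y$, the characteristic function $\E[\exp(i\sum_j\langle u^j,X(t^j)\rangle)]$ factors as a product of $d$ identical Gaussian factors governed by the covariance matrix $\Sigma(\bar t):=\mathrm{Cov}(Y(t^1),\dots,Y(t^n))$. Carrying out the Gaussian integral over $\bar u\in\R^{nd}$ then gives
\[
\E[L(x,S)^n]\ \le\ (2\pi)^{-nd/2}\int_{S^n}\bigl[\det\Sigma(\bar t)\bigr]^{-d/2}\,d\bar t.
\]

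Next I would expand the determinant via the conditional variance identity \eqref{Eq:Cov_formula} and apply the strong local nondeterminism hypothesis (A2), with the convention $t^0=0$, to bound each conditional variance from below by $C_3\min_{0\le k\le m-1}\rho^2(t^m,t^k)$. The leading factor $\mathrm{Var}(Y(t^1))$ is handled by the same kind of lower bound, reducing to $\rho^2(t^1,0)$ up to a constant via (A1) with a reference point. Raising to the $-d/2$ power reduces the problem to controlling
\[
\int_{S^n}\prod_{m=1}^n\Bigl[\min_{0\le k\le m-1}\rho(t^m,t^k)\Bigr]^{-d}\,d\bar t.
\]

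The final step is to integrate one variable at a time, from $t^n$ down to $t^1$. At stage $m$ the variables $t^1,\dots,t^{m-1}$ are held fixed and, together with $t^0=0$, play the role of the anchor points in Lemma \ref{Lem:Bd_int} with $\beta=d$, giving
\[
\int_S\Bigl[\min_{0\le k\le m-1}\rho(t^m,t^k)\Bigr]^{-d}\,dt^m\ \le\ C\,m^{d/Q}\lambda_N(S)^{1-d/Q}
\]
(the distinctness hypothesis is automatic off a Lebesgue-null set). Iterating and collecting the product $\prod_{m=1}^n m^{d/Q}=(n!)^{d/Q}$ yields the claimed bound, and the special case $S=B_\rho(a,r)$ then follows from $\lambda_N(B_\rho(a,r))\le 2^N r^Q$.

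The main obstacle has essentially been absorbed into Lemma \ref{Lem:Bd_int}, whose anisotropic Voronoi/star-shape input is what produces the sharp constant $m^{d/Q}\lambda_N(S)^{1-d/Q}$ rather than a cruder bound; once that lemma is available, the combinatorial factor $(n!)^{d/Q}$ drops out cleanly from the iterated integration. The only remaining bookkeeping point is the $m=1$ factor involving only the anchor $t^0=0$, which can be handled either by a direct anisotropic polar-coordinate estimate or by a minor extension of Lemma \ref{Lem:Bd_int} to the single-anchor case.
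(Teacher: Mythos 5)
Your proposal follows the paper's proof essentially step for step: the Fourier representation \eqref{Eq:local_time_mom}, the Gaussian integration yielding $[\det\mathrm{Cov}(Y(t^1),\dots,Y(t^n))]^{-d/2}$, the conditional-variance factorization \eqref{Eq:Cov_formula} combined with (A2), and the iterated application of Lemma \ref{Lem:Bd_int} with $\beta=d$ producing $(n!)^{d/Q}$. Your extra remark about the $m=1$ anchor factor is a point the paper silently absorbs into the $\min_{0\le k\le 0}\rho(t^1,t^0)=\rho(t^1,0)$ convention, so nothing further is needed.
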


\begin{proof}
By \eqref{Eq:local_time_mom}, we have
\begin{align*}
\E[L(x, S)^n] &= (2\pi)^{-nd} \int_{\R^{nd}} d\bar{u} \int_{S^n} d\bar{t} \, e^{-i\sum_{j=1}^n \langle u^j, x\rangle}\,
 \E\left[e^{i\sum_{j=1}^n\langle u^j, X(t^j) \rangle}\right],
\end{align*}
where $\bar{u} = (u^1, \dots, u^n)$ and $\bar{t} = (t^1, \dots, t^n)$.
Since $X_1, \dots, X_d$ are i.i.d.~copies of $Y$, we have
\begin{align*}
\E[L(x, S)^n] & \le (2\pi)^{-nd} \int_{S^n} d\bar{t} \prod_{k=1}^d \int_{\R^{n}} d\bar{u}_k  \, e^{-\frac{1}{2}
\mathrm{Var}(\sum_{j=1}^n u^j_k Y(t^j))}\\
& = (2\pi)^{-nd/2} \int_{S^n} \left[ \det \mathrm{Cov}(Y(t^1), \dots, Y(t^n)) \right]^{-d/2} d\bar{t},
\end{align*}
where $\bar{u}_k = (u^1_k, \dots, u^n_k)$ and the last equality follows from the connection between 
the characteristic function of a Gaussian random vector and the value of its density at the origin
(see Lemma \ref{Lem:CD82} below for a more general result). 
By \eqref{Eq:Cov_formula},
\[ \det\mathrm{Cov}(Y(t^1), \dots, Y(t^n)) = \mathrm{Var}(Y(t^1)) \prod_{m=2}^n \mathrm{Var}(Y(t^m)|Y(t^1), \dots, Y(t^{m-1})). 
\]
It follows from condition (A2) that
\begin{equation}\label{Eq:local_time_moment}
\E[L(x, S)^n]  \le C (2\pi)^{-nd/2} \int_{S^n} \prod_{m=1}^n \left[\min_{0 \le k \le m-1}\rho(t^m, t^k)\right]^{-d} d\bar{t}.
\end{equation}
If we integrate \eqref{Eq:local_time_moment} in the order of $dt^n, dt^{n-1}, \dots, dt^1$, and apply Lemma 
\ref{Lem:Bd_int} (with $\beta = d$) repeatedly, we deduce that
\begin{align*}
\E[L(x, S)^n] \le C^n (n!)^{d/Q} \lambda_N(S)^{n(1-d/Q)}.
\end{align*}
This completes the proof of the proposition.
\end{proof}

Next, we aim to prove moment estimates for the increments of the local time in both time and space variables. 
We are mostly following the approach of Lemma 2.5 in Xiao \cite{X97}, but let us point out that there is an 
error in the proof of (2.9) of that lemma: we observe that the second inequality in (2.20) of the proof may not 
be true if $N \ge 2$, because there may be multiple $j$'s such that $\min\{\xi(|t_{\pi(j)} - t_i|)^\gamma : 
i = 0 \textup{ or } i \ne \pi(j)\} 
= \xi(|t_{\pi(j)} - t_{\pi(1)}|)^\gamma$. [When $N=1$ and $\pi$ is the permutation such that $t_{\pi(1)}
 \le t_{\pi(2)}\le \ldots \le t_{\pi(n)}$, then (2.20) in \cite{X97} holds. When $N\ge 2$, the definition of the permutation $\pi$
 on page 139 is not sufficient for the second inequality in  (2.20) to hold.] Nevertheless, with the help of Lemma 
 \ref{Lem:K} provided below, we are able to give a complete proof for the moment estimates in Proposition 
 \ref{Prop:mom_LT_incre} below and therefore Lemma 2.5 in \cite{X97} can be corrected in a similar way.

The proof of Lemma \ref{Lem:K} below is based on Besicovitch's covering theorem.
The original theorem is stated for balls under the Euclidean metric (cf., e.g., \cite[p.30]{M}). 
In order to apply it in our anisotropic setting, we will use the following more general version of the covering 
theorem for comparable intervals in $\R^N$. 

\begin{lemma}\label{Lem:cov_thm}\cite[ Theorem 1.1 and Remark 5]{G75}
There exists a positive integer $M = M(N)$ depending only on $N$ with the following property.
For any bounded subset $A$ of $\R^N$ and any family $\mathscr{B} = \{Q(x) : x \in A \}$ of closed intervals 
such that $Q(x)$ is centered at $x$ for every $x \in A$ and, for every two points $x_1$ and $x_2$ of $A$, 
$Q(x_1)$ and $Q(x_2)$ can be translated to be concentric such  that one is contained in the other, 
there exists a sequence $\{Q_i\}$ in $\mathscr{B}$ such that:
\begin{enumerate}
\item[$(i)$] $A\subset \bigcup_i Q_i$;
%\item every point of $\R^N$ belongs to at most $L$ intervals of $\{Q_k\}$ i.e. $\sum_k \mathbf{1}_{Q_k}(t) \le L$ for all $t \in \R^N$;
\item[$(ii)$] the intervals of $\{Q_i\}$ can be divided into $M$ families of disjoint intervals.
\end{enumerate}
\end{lemma}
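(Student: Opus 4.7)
The plan is to adapt Besicovitch's classical greedy covering argument, originally devised for families of balls, to the anisotropic setting of pairwise \emph{comparable} intervals. The crucial structural hypothesis is that any two intervals $Q(x_1), Q(x_2)$ can be concentrically translated with one nested inside the other; this substitutes for the fact, used in the ball case, that two concentric balls are always comparable by inclusion. In particular ``size'' (say, volume, or any monotone functional of the side lengths) is totally preordered along the comparability relation, so the notion of ``largest remaining interval'' is well defined throughout the construction.

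First I would carry out the usual Besicovitch-type greedy selection. After truncating candidates whose size is comparable to $\operatorname{diam}(\overline{A})$ or larger (these contribute only countably many relevant scales and any one of them already covers $A$), I order the candidates by decreasing size and inductively pick $Q_{i_k}$ with center $x_{i_k}\notin Q_{i_1}\cup\cdots\cup Q_{i_{k-1}}$ whose size is at least $\tfrac{3}{4}$ of the supremum of sizes over intervals whose centers are still uncovered. A standard exhaustion argument then shows that $\{Q_i\}$ covers $A$: if some $x\in A$ were missed, the size of $Q(x)$ would eventually compete with the supremum rule and force $Q(x)$ (or an interval of comparable size through $x$) to be selected, a contradiction.

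The main obstacle, which is the true geometric content of the lemma, is the pointwise multiplicity bound: fix $p\in\R^N$, suppose $Q_{i_{k_1}},\dots,Q_{i_{k_s}}$ all contain $p$, and bound $s\le M_0(N)$ with $M_0$ depending only on $N$. The selection rule forces the centers $x_{i_{k_j}}$ to lie in each other's exteriors (in the size-ordered sense), and comparability upgrades this to a quantitative separation: for any two of these intervals, after concentric translation one sits strictly inside the other and the outer one misses the inner center. Evaluating these inclusion/exclusion relations at the common point $p$ and comparing side lengths coordinate by coordinate, one can extract an anisotropic analogue of the classical ``angle between center-vectors is bounded below'' estimate used for balls. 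This is the crux of the proof; Guzm\'an's argument in \cite{G75} exploits precisely the coordinatewise monotone ordering provided by comparability to reduce the count to a finite packing problem with a constant depending only on $N$.

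Once the pointwise multiplicity is bounded by $M_0(N)$, the partition into $M$ disjoint subfamilies is routine. Process the selected intervals in the order $i_1,i_2,\dots$ and greedily assign $Q_{i_k}$ the smallest color index $j\in\{1,\dots,M\}$ not already used by any earlier $Q_{i_{k'}}$ that meets $Q_{i_k}$; the multiplicity bound caps the number of forbidden colors at each step, so $M=M_0(N)$ suffices. Thus, modulo the packing estimate at a single point, the lemma follows the classical Besicovitch template, and the entire delicacy is concentrated in extracting the dimensional constant $M_0(N)$ from the comparability hypothesis alone.
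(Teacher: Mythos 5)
The paper does not actually prove this lemma: it is quoted directly from de Guzm\'an \cite{G75} (Theorem 1.1 and Remark 5), so there is no internal proof to compare against, and your write-up must be judged as a self-contained argument. As such it has two problems. First, you explicitly defer what you yourself call ``the crux of the proof'' --- the dimensional bound $M_0(N)$ on the multiplicity of the selected intervals, which is the only place the comparability hypothesis enters --- to ``Guzm\'an's argument in \cite{G75}.'' Since that is precisely the content of the result to be proved, the proposal is incomplete (indeed circular) exactly at its one nontrivial point; everything you do write out (greedy selection by decreasing size, the exhaustion argument for the covering property, the coloring) is the generic Besicovitch scaffolding that applies to any family of sets once the packing estimate is granted.

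Second, the step you call routine contains a genuine logical gap. A bound on the \emph{pointwise} multiplicity (no point of $\R^N$ lies in more than $M_0$ of the selected intervals) does not bound the number of \emph{earlier} selected intervals that a given $Q_{i_k}$ meets, and it is the latter quantity that controls the number of forbidden colors in the greedy coloring: a single interval can intersect arbitrarily many pairwise disjoint intervals, so bounded overlap does not imply bounded degree in the relevant intersection graph. The classical proof establishes the stronger backward-intersection bound directly, by splitting the earlier selected intervals meeting $Q_{i_k}$ into those of comparable size (whose number is bounded by a volume-packing argument, since their centers are pairwise uncovered and hence separated) and those of much larger size (whose number is bounded by a separation argument at the center $x_{i_k}$); it is this second count that genuinely uses the hypothesis that any two members of the family are comparable after concentric translation. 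If the intent is to supply a proof rather than a citation, both of these estimates need to be carried out for comparable intervals, not asserted.
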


We use the Besicovitch covering theorem above to prove the next lemma.

\begin{lemma}\label{Lem:K}
There exists a positive integer $K = K(N)$ depending only on $N$ such that for any integer $n \ge 1$ and any 
distinct points $s^0, s^1, \dots, s^n \in \R^N$, the cardinality of the set of all $j \in \{1, \dots, n\}$ such that
\begin{equation}\label{Eq:min_rho_tilde}
\tilde{\rho}(s^j, s^0) = \min\{ \tilde{\rho}(s^j, s^i) : 0 \le i \le n, i \ne j \}
\end{equation}
is at most $K$, where $\tilde\rho$ is the equivalent metric defined in \eqref{D:rho_tilde}.
\end{lemma}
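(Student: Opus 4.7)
The plan is to apply the Besicovitch-type covering theorem (Lemma \ref{Lem:cov_thm}) to a family of $\tilde{\rho}$-balls naturally associated with the candidate points. For each $j$ in the set $J$ under consideration, I set $r_j = \tilde{\rho}(s^j, s^0) > 0$ and $Q_j = B_{\tilde{\rho}}(s^j, r_j)$. Since $\tilde{\rho}$ is the max-metric with exponents $1/H_i$, each $Q_j$ is an axis-aligned box with side lengths $2 r_j^{1/H_i}$, so all $Q_j$'s have the same shape; any two of them can be translated to be concentric with one contained in the other, making $\{Q_j\}_{j \in J}$ a family of comparable intervals in the sense required by Lemma \ref{Lem:cov_thm}.

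Two features of this family will drive the argument. First, every $Q_j$ contains the common point $s^0$, since $\tilde{\rho}(s^0, s^j) = r_j$ places $s^0$ on $\partial Q_j$. Second, for distinct $j, j' \in J$, the defining minimality property of $J$ gives $\tilde{\rho}(s^{j'}, s^j) \ge r_j$, so $s^{j'} \notin \operatorname{int}(Q_j)$; in particular each $s^j$ lies only in its own ball $Q_j$, apart from possibly the boundary of some other $Q_{j'}$. Applying Lemma \ref{Lem:cov_thm} to the finite bounded set $A = \{s^j : j \in J\}$ with the family $\{Q_j\}_{j \in J}$ produces a subfamily $\{Q_{j_k}\}_{k=1}^L$ that covers $A$ and decomposes into $M = M(N)$ groups of pairwise disjoint intervals. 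Since each $Q_{j_k}$ contains the shared point $s^0$ and two disjoint closed intervals cannot share any point, each group contains at most one ball, so $L \le M(N)$.

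To conclude $|J| \le M(N)$ one argues that the subfamily essentially coincides with the full family $\{Q_j\}_{j \in J}$: by the second feature above, $s^j \in \operatorname{int}(Q_{j'})$ only when $j = j'$, so in a generic configuration each $Q_j$ is indispensable for covering $s^j$ and must appear in the subfamily. The main obstacle is handling tie configurations where $\tilde{\rho}(s^{j'}, s^j) = r_j$ exactly, placing $s^j$ on $\partial Q_{j'}$ and allowing the subfamily to omit $Q_j$. I expect to address this either by a small perturbation of the radii (e.g., replacing $Q_j$ with $B_{\tilde{\rho}}(s^j, r_j + \eta_j)$ for suitably chosen $\eta_j > 0$), or by appealing to a coloring strengthening of Besicovitch's theorem that applies directly to the full family via the center property in the second feature. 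Either route yields $|J| \le M(N)$, so one may take $K = K(N) = M(N)$.
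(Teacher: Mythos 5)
Your setup is the right one and matches the paper's starting point: put a $\tilde\rho$-ball of radius $r_j=\tilde\rho(s^j,s^0)$ around each candidate $s^j$, note that these are comparable intervals all containing $s^0$, and that \eqref{Eq:min_rho_tilde} makes each ball ``almost'' the only one covering its own center. But the tie configuration you flag at the end is not a removable technicality --- it is the crux, and neither of your proposed fixes closes it. The obstruction is structural: for a one-pass argument you need a radius $R_j$ with $\tilde\rho(s^j,s^0)\le R_j$ (so that $s^0\in Q_j$ and each disjoint group in the Besicovitch subfamily holds at most one ball) and simultaneously $R_j<\tilde\rho(s^j,s^{j'})$ for all $j'\ne j$ (so that $Q_j$ is indispensable and must appear in the subfamily). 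Since \eqref{Eq:min_rho_tilde} only gives $\tilde\rho(s^j,s^{j'})\ge\tilde\rho(s^j,s^0)$, equality can occur (e.g.\ $s^0=(0,0)$, $s^1=(1,0)$, $s^2=(0,1)$ with $H_1=H_2$), and then no radius $R_j$ exists at all. In particular your perturbation $r_j\mapsto r_j+\eta_j$ goes the wrong way: enlarging preserves $s^0\in Q_j$ but makes it \emph{more} likely that $Q_{j'}$ swallows $s^j$, while shrinking (the direction that restores indispensability) ejects $s^0$ from the balls and kills the ``one ball per disjoint group'' count. The unspecified ``coloring strengthening'' of Besicovitch would essentially be the lemma itself, so invoking it is circular.

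The paper resolves this with a genuinely two-stage argument that your proposal is missing. First it \emph{shrinks} the radii to $r_j=\tilde\rho(s^j,s^0)-\eps$, with $\eps$ calibrated through the ratios $\delta_0=\min_{i,j}\tilde\rho(s^i,s^0)/\tilde\rho(s^j,s^0)$ so that $(1-\eps_0)^{1/H_p}(1+\delta_0^{1/H_p})\ge1$. The shrunken balls are indispensable, so all $k$ of them enter the Besicovitch subfamily, which Lemma \ref{Lem:cov_thm} splits into $M$ families of pairwise \emph{disjoint} intervals; this gives $k\le J(1)+\cdots+J(M)$ but no bound yet. Then, within each disjoint family, it \emph{re-enlarges} the radii back to $r^*_{i,j}=\tilde\rho(s^{i,j},s^0)$: the enlarged balls now all contain $s^0$, and the disjointness of the shrunken balls together with the choice of $\eps$ shows coordinatewise that each enlarged ball still contains no other center. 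A second application of Lemma \ref{Lem:cov_thm} then bounds each $J(i)$ by $M$, giving $K=M^2$ rather than the $K=M$ your one-pass scheme aims for. The quantitative $\eps$-calculus and the second covering application are exactly the content your proposal would need to supply.
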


\begin{proof}
Without loss of generality, we may assume that \eqref{Eq:min_rho_tilde} is satisfied for $j = 1, \dots, k$.
Note that for $s \in \R^N$ and $r > 0$, the ball 
$B_{\tilde{\rho}}(s, r) := \{ t \in \R^N: \tilde{\rho}(t, s) \le r \}$ under the metric $\tilde{\rho}$ 
is the closed interval (hypercube) centered at $s$ with side lengths $2r^{1/H_1}, \dots, 2r^{1/H_N}$.
We will make use of Besicovitch's covering theorem (Lemma \ref{Lem:cov_thm}) to show that $k \le K$ for 
some positive integer $K = K(N)$ that depends only on the dimension $N$.

To this end, let 
\[ \delta_0 = \min\left\{\frac{\tilde{\rho}(s^i, s^0)}{\tilde{\rho}(s^j, s^0)} : i, j \in\{1, \dots, k\} \right\}. \]
Note that $0 < \delta_0 \le 1$.
Choose a small $0 < \varepsilon_0 < 1$ such that $(1- \varepsilon_0)^{1/H_p}(1+\delta_0^{1/H_p}) \ge 1$ for 
all $p \in \{1, \dots, N\}$.
Let $\varepsilon = \varepsilon_0 \min\{ \tilde{\rho}(s^i, s^0) : 1 \le i \le k \}$.
Let $A = \{s^1, \dots, s^k\}$ and
consider the family $\mathscr{B} = \{ B_{\tilde{\rho}}(s^1, r_1), \dots, B_{\tilde{\rho}}(s^k, r_k) \}$ of intervals, 
where $r_i = \tilde{\rho}(s^i, s^0)- \varepsilon$. 
By Besicovitch's covering theorem (Lemma \ref{Lem:cov_thm}), we can find sub-families $\mathscr{B}_1, 
\dots, \mathscr{B}_M \subset \mathscr{B}$, which we denote by $\mathscr{B}_i = \{ B_{\tilde{\rho}}(s^{i,1}, r_{i,1}), \dots, 
B_{\tilde{\rho}}(s^{i,J(i)}, r_{i,J(i)}) \}$, such that 
\[ A = \{ s^1, \dots, s^k \} \subset 
\bigcup_{i=1}^M \bigcup_{j=1}^{J(i)} B_{\tilde{\rho}}(s^{i,j}, r_{i,j}) \]
and for each sub-family $\mathscr{B}_i$, the intervals in $\mathscr{B}_i$ are pairwise disjoint, where $M = M(N)$ is 
a positive integer depending only on $N$. For each $1 \le j \le k$, by the assumption \eqref{Eq:min_rho_tilde}, 
if $\ell \ne j$, then $\tilde{\rho}(s^j, s^\ell) \ge \tilde{\rho}(s^j, s^0) > r_j$.
In other words, for each $1 \le j \le k$, the interval $B_{\tilde{\rho}}(s^j, r_j)$ does not contain any other $s^\ell$ with $\ell \ne j$. 
This means that at least $k$ intervals are needed to cover the set $A$, and hence $k \le J(1) + \dots + J(M)$.

Let us fix $i \in \{1, \ldots, M\}$ and estimate the cardinality $J(i)$ of the family $\mathscr{B}_i$. Consider the family $\mathscr{B}_i^* 
= \{ B_{\tilde{\rho}}(s^{i,1}, r^*_{i,1}), \dots, B_{\tilde{\rho}}(s^{i,J(i)}, r^*_{i, J(i)}) \}$, where $r^*_{i,j} = \tilde{\rho}(s^{i,j}, s^0)$.
Since the intervals in $\mathscr{B}_i$ are pairwise disjoint, this means that for any pair $s^{i,\ell} \ne s^{i,j}$ we 
can find some $p \in \{1, \dots, N\}$ such that 
\[ |s^{i,\ell}_p - s^{i,j}_p| > r_{i,\ell}^{1/H_p} + r_{i,j}^{1/H_p}. \]
Then, by the definitions of $r_i$, $\varepsilon$, and $\delta_0$, and by the choice of $\varepsilon_0$,
\begin{align*}
|s^{i,\ell}_p - s^{i,j}_p| &> (\tilde{\rho}(s^{i,\ell}, s^0)- \varepsilon)^{1/H_p} + 
(\tilde{\rho}(s^{i,j}, s^0)- \varepsilon)^{1/H_p}\\
& \ge (1-\varepsilon_0)^{1/H_p} \left(\tilde{\rho}(s^{i,\ell}, s^0)^{1/H_p} + 
\tilde{\rho}(s^{i,j}, s^0)^{1/H_p}\right)\\
& \ge (1-\varepsilon_0)^{1/H_p} (\delta_0^{1/H_p} + 1) \tilde{\rho}(s^{i,j}, s^0)^{1/H_p}\\
& \ge \tilde{\rho}(s^{i,j}, s^0)^{1/H_p}\\
& = (r_{i,j}^*)^{1/H_p}.
\end{align*}
It follows that $\tilde{\rho}(s^{i,\ell}, s^{i,j}) > r_{i,j}^*$, which means that the interval $B(s^{i,j}, r^*_{i,j})$ does not contain 
any other $s^{i,\ell}$ with $\ell \ne j$. On the other hand, every interval in $\mathscr{B}^*_i$ contains the point $s^0$, 
so these intervals are not pairwise disjoint. Then another application of Besicovitch's covering theorem (Lemma 
\ref{Lem:cov_thm}) to the set $\{ s^{i,1}, \dots, s^{i,J(i)} \}$ and the family $\mathscr{B}_i^*$ implies that $J(i) \le M$. 
Hence, we conclude that $k \le M^2$, and the proof is finished by taking $K = M^2$.
\end{proof}

Recall the following lemma in \cite[Lemma 2]{CD82}.

\begin{lemma}\label{Lem:CD82}
Let $Y_1, \dots, Y_n$ be mean zero Gaussian random variables that are linearly independent and assume that 
$\int_\R g(v) e^{-\varepsilon v^2} dv < \infty$ for all $\varepsilon > 0$. Then
\[ \int_{\R^n} g(v_1) \exp\bigg[ -\frac{1}{2} \mathrm{Var}\bigg( \sum_{l=1}^n v_l Y_l \bigg) \bigg] dv_1 \dots dv_n 
= \frac{(2\pi)^{(n-1)/2}}{\det\mathrm{Cov}(Y_1, \dots, Y_n)^{1/2}}
\int_\R g(v/\sigma_1) e^{-v^2/2} dv, \]
where $\sigma_1^2 = \mathrm{Var}(Y_1|Y_2, \dots, Y_n)$.
\end{lemma}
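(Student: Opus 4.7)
The plan is to recognize the integrand as essentially a multivariate Gaussian density and then reduce the computation to a one-dimensional integral. Set $\Sigma = \mathrm{Cov}(Y_1,\dots,Y_n)$, which is strictly positive definite by the linear independence hypothesis, so that $\mathrm{Var}(\sum_l v_l Y_l) = v^T \Sigma v$. The factor $\exp[-\tfrac{1}{2} v^T \Sigma v]$ is therefore, up to the normalization $(2\pi)^{n/2}\det(\Sigma)^{-1/2}$, the density of an $N(0,\Sigma^{-1})$ vector $V = (V_1,\dots,V_n)$, so that the left-hand side equals $(2\pi)^{n/2}\det(\Sigma)^{-1/2}\,\E[g(V_1)]$.

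It remains to identify the marginal distribution of $V_1$. I would apply the block matrix inversion formula (equivalently, the Schur complement identity) to the partition of $\Sigma$ with $(1,1)$-entry $\sigma_{11}$, off-diagonal row/column $\sigma=(\sigma_{12},\dots,\sigma_{1n})^T$, and bottom-right $(n-1)\times(n-1)$ block $\tilde\Sigma$. This yields $(\Sigma^{-1})_{11} = (\sigma_{11} - \sigma^T\tilde\Sigma^{-1}\sigma)^{-1}$. The quantity in parentheses is precisely the variance of $Y_1$ minus its $L^2$-projection onto $\mathrm{span}(Y_2,\dots,Y_n)$, i.e., $\sigma_1^2 = \mathrm{Var}(Y_1\mid Y_2,\dots,Y_n)$. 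Hence $V_1 \sim N(0, 1/\sigma_1^2)$, and writing its density explicitly and substituting $v = \sigma_1 v_1$ gives $\E[g(V_1)] = (2\pi)^{-1/2}\int_\R g(v/\sigma_1)e^{-v^2/2}\,dv$.

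Combining the two pieces produces the prefactor $(2\pi)^{(n-1)/2}\det(\Sigma)^{-1/2}$ and the claimed one-dimensional integral. The integrability assumption $\int_\R g(v)e^{-\varepsilon v^2}\,dv<\infty$ for every $\varepsilon>0$ ensures (via Fubini and with $\varepsilon = 1/(2\sigma_1^2)$) that all manipulations are legitimate and that both sides are finite, so there is no real technical obstacle; the only substantive point is identifying the conditional variance with the appropriate Schur complement, which is classical. An equivalent alternative, should a purely computational presentation be preferred, is to fix $v_1$, complete the square in the quadratic form $v^T\Sigma v$ with respect to $(v_2,\dots,v_n)$ to expose the summand $\sigma_1^2 v_1^2$, evaluate the resulting $(n-1)$-dimensional Gaussian integral as $(2\pi)^{(n-1)/2}\det(\tilde\Sigma)^{-1/2}$, and then apply the determinant identity $\det(\Sigma) = \sigma_1^2\det(\tilde\Sigma)$ (which is also a special case of \eqref{Eq:Cov_formula}).
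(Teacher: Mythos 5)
Your argument is correct and complete. Note that the paper itself offers no proof of this lemma --- it is quoted verbatim from Cuzick and DuPreez \cite{CD82}, Lemma 2 --- so there is no internal proof to compare against; what you have written is a legitimate self-contained derivation of the cited result. Both of your routes work: the probabilistic one (reading $\exp[-\tfrac12 v^T\Sigma v]$ as $(2\pi)^{n/2}\det(\Sigma)^{-1/2}$ times the $N(0,\Sigma^{-1})$ density and identifying $(\Sigma^{-1})_{11}=1/\sigma_1^2$ via the Schur complement, which for jointly Gaussian variables is exactly the conditional variance $\mathrm{Var}(Y_1\mid Y_2,\dots,Y_n)$), and the computational one (completing the square in $(v_2,\dots,v_n)$ to expose $\sigma_1^2v_1^2$ and using $\det\Sigma=\sigma_1^2\det\tilde\Sigma$, which is indeed the case of \eqref{Eq:Cov_formula} with the variables reordered so that $Y_1$ is conditioned on all the others). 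One immaterial slip: the value of $\varepsilon$ needed for integrability is $\varepsilon=\sigma_1^2/2$ (the Gaussian weight surviving after the inner integration is $e^{-\sigma_1^2 v_1^2/2}$), not $1/(2\sigma_1^2)$; since the hypothesis holds for every $\varepsilon>0$ this does not affect anything.
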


Recall also the equivalent metric $\tilde\rho$ defined in \eqref{D:rho_tilde}.
Note that condition (A2) implies
\begin{equation}\label{Eq:SLND_rho_tilde}
\mathrm{Var}(Y(t)| Y(t^1), \dots, Y(t^n)) \ge C_3 \min_{0 \le l \le n} \tilde{\rho}^2(t, t^l)
\end{equation}
for all $n \ge 1$ and all $t, t^1, \dots, t^n \in T$.

Now, we derive moment estimates for the increments of the local time.

\begin{proposition}\label{Prop:mom_LT_incre}
Suppose $X$ satisfies condition \textup{(A)} on $T$ and $d < Q$. 
Then there exist positive finite constants $C = C(T, N, d, H)$ and $K = K(N) > 1$ such that for all $\gamma \in (0, 1)$ 
small enough, for all intervals $S \subseteq T$,  $x, y \in \R^d$, and all even integers $n \ge 2$, we have
\[ \E[(L(x, S) - L(y, S))^n] \le C^n |x-y|^{n\gamma} (n!)^{d/Q + K\gamma/Q} \lambda_N(S)^{n(1-(d+\gamma)/Q)}. \]
In particular, for all $a \in T$, $0 < r < 1$ with $B_\rho(a, r) \subset T$, we have
\[ \E[(L(x, B_\rho(a, r)) - L(y, B_\rho(a, r)))^n] \le C^n |x-y|^{n\gamma} (n!)^{d/Q + K\gamma/Q} r^{n(Q-d-\gamma)}. \]
\end{proposition}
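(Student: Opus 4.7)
The plan is to adapt the strategy of \cite[Lemma 2.5]{X97} while filling the gap identified in the discussion preceding Lemma \ref{Lem:K}. Starting from the Fourier representation \eqref{Eq:local_time_incre_mom}, apply the elementary estimate $|e^{-i\langle u^j, x\rangle} - e^{-i\langle u^j, y\rangle}|\le 2^{1-\gamma}|u^j|^\gamma |x-y|^\gamma$ for $\gamma\in(0,1)$, and distribute $|u^j|^\gamma \le \sum_{k=1}^d|u^j_k|^\gamma$ (which follows from $(a+b)^\gamma\le a^\gamma+b^\gamma$ for $\gamma\in(0,1]$) across the $d$ coordinates. This expands the estimate into a sum of $d^n$ terms indexed by $(k_1,\dots,k_n)\in\{1,\dots,d\}^n$, in each of which the Gaussian integral factors over the i.i.d.\ components of $X$.

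For each coordinate $k$ and configuration $(k_1,\dots,k_n)$, let $J_k = \{j: k_j = k\}$. The change of variables $u\mapsto A^{-1/2}v$ with $A=\mathrm{Cov}(Y(t^1),\dots,Y(t^n))$ reduces the $\bar u_k$-integral to a weighted Gaussian moment, which H\"older's inequality together with the identity $(A^{-1})_{jj}=1/\mathrm{Var}(Y(t^j)\,|\,Y(t^l),\,l\ne j)$ bounds by
\begin{equation*}
C^n\, \det\mathrm{Cov}(Y(t^1),\dots,Y(t^n))^{-1/2}\prod_{j\in J_k}\mathrm{Var}(Y(t^j)\,|\,Y(t^l),\,l\ne j)^{-\gamma/2}.
\end{equation*}
Multiplying over $k=1,\dots,d$, summing the $d^n$ configurations, and invoking SLND \eqref{Eq:SLND_rho_tilde} dominates the $\bar t$-integrand by $\det\mathrm{Cov}(Y(t^1),\dots,Y(t^n))^{-d/2}\prod_{j=1}^n[\min_{0\le l\le n,\,l\ne j}\tilde\rho(t^j,t^l)]^{-\gamma}$. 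For any permutation $\pi$ of $\{1,\dots,n\}$ with $\pi(0)=0$, formula \eqref{Eq:Cov_formula} combined with (A2) gives $\det\mathrm{Cov}^{-d/2}\le C^n\prod_j[\min_{0\le l\le j-1}\rho(t^{\pi(j)},t^{\pi(l)})]^{-d}$, already in a ``backward-minimum'' form amenable to iterated integration via Lemma \ref{Lem:Bd_int}.

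The main obstacle, and precisely the gap in \cite{X97}, is reshaping the $\gamma$-factor into the same backward-minimum form. For each $j\in\{1,\dots,n\}$ choose $\ell(j)\in\{0,1,\dots,n\}\setminus\{j\}$ realizing $\tilde\rho(t^j,t^{\ell(j)})=\min_{l\ne j}\tilde\rho(t^j,t^l)$, and take $\pi$ to sort the points by $\tilde\rho$-distance to $t^0$. Assign the factor $\tilde\rho(t^j,t^{\ell(j)})^{-\gamma}$ to the index $a(j):=\max\{\pi^{-1}(j),\pi^{-1}(\ell(j))\}\ge 1$; by the symmetry of $\tilde\rho$, this factor is absorbed into $[\min_{0\le l\le a(j)-1}\tilde\rho(t^{\pi(a(j))},t^{\pi(l)})]^{-\gamma}$. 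For each index $a\in\{1,\dots,n\}$, the number of $j$'s with $a(j)=a$ is at most $1$ (from $j=\pi(a)$ itself) plus the number of $j\ne\pi(a)$ with $\ell(j)=\pi(a)$; Lemma \ref{Lem:K} bounds the latter by a dimensional constant $K_0=K(N)$. Hence
\begin{equation*}
\prod_{j=1}^n\bigl[\min_{l\ne j}\tilde\rho(t^j,t^l)\bigr]^{-\gamma}\le \prod_{a=1}^n\bigl[\min_{0\le l\le a-1}\tilde\rho(t^{\pi(a)},t^{\pi(l)})\bigr]^{-\alpha_a},
\end{equation*}
with nonnegative exponents $\alpha_a\le(K_0+1)\gamma$ and $\sum_a\alpha_a=n\gamma$.

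Finally, choose $\gamma$ small enough that $d+(K_0+1)\gamma<Q$. Iterated integration via Lemma \ref{Lem:Bd_int} (in the order $dt^{\pi(n)},\dots,dt^{\pi(1)}$) applied to the integrand $\prod_{a=1}^n[\min_{0\le l\le a-1}\rho(t^{\pi(a)},t^{\pi(l)})]^{-(d+\alpha_a)}$ yields, at the $j$-th step, a factor bounded by $C\, j^{(d+\alpha_{n-j+1})/Q}\lambda_N(S)^{1-(d+\alpha_{n-j+1})/Q}$. Taking the product, using $\sum_a\alpha_a=n\gamma$ and $\alpha_a\le(K_0+1)\gamma$, and setting $K:=K_0+1$, one obtains $C^n(n!)^{(d+K\gamma)/Q}\lambda_N(S)^{n(1-(d+\gamma)/Q)}$; combined with the $|x-y|^{n\gamma}$ prefactor this is the first bound, and the ball case follows from $\lambda_N(B_\rho(a,r))\le\lambda_N(B_{\tilde\rho}(a,r))=2^Nr^Q$.
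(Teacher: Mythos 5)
Your overall architecture coincides with the paper's: the Fourier representation, the $\gamma$-power trick, the Cuzick--DuPreez-type Gaussian computation producing $\det\mathrm{Cov}^{-d/2}\prod_j\sigma_j^{-\gamma}$, the use of Lemma \ref{Lem:K} to bound the multiplicity of nearest-neighbour assignments, and iterated integration via Lemma \ref{Lem:Bd_int}. The one step that does not survive scrutiny is your reshaping of the $\gamma$-factors via a permutation $\pi$ that \emph{sorts the points by $\tilde\rho$-distance to $t^0$}. This $\pi$ is a function of the integration variables $(t^1,\dots,t^n)$, so ``iterated integration in the order $dt^{\pi(n)},\dots,dt^{\pi(1)}$'' is not a well-defined iterated integral. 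The natural repair --- partition $S^n$ into the $n!$ regions $R_\pi$ on which $\pi$ is the sorting permutation, bound the integrand on each $R_\pi$ by the $\pi$-dependent product, and then enlarge $R_\pi$ to $S^n$ so that Lemma \ref{Lem:Bd_int} applies --- produces $n!$ identical contributions (each $\int_{R_\pi}G_{\pi,\alpha}$ equals $\int_{\tilde R}G_{\mathrm{id},\alpha}$ after relabelling, and you have no mechanism extracting the compensating factor $1/n!$ from the restriction to $\tilde R$). That extra $n!$ turns the bound into $(n!)^{1+d/Q+K\gamma/Q}$, which is useless for the Chebyshev/Borel--Cantelli applications that this proposition feeds.

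The fix is small and lands you exactly on the paper's proof: do not sort. Keep the original index order and assign the factor $\tilde\rho(t^j,t^{\ell(j)})^{-\gamma}$ to $a(j):=\max\{j,\ell(j)\}$ (with $\ell(j)=0$ giving $a(j)=j$). The absorption still works: if $a(j)=j$ then $\ell(j)\le j-1$ and $\tilde\rho(t^j,t^{\ell(j)})\ge\min_{0\le l\le j-1}\tilde\rho(t^j,t^l)$; if $a(j)=\ell(j)>j$ then by symmetry $\tilde\rho(t^{\ell(j)},t^j)\ge\min_{0\le l\le \ell(j)-1}\tilde\rho(t^{\ell(j)},t^l)$. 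Lemma \ref{Lem:K} still bounds by $K_0$ the number of $j\ne a$ with $\ell(j)=a$, so $\alpha_a\le(K_0+1)\gamma$ and $\sum_a\alpha_a=n\gamma$ as before. The only remaining configuration-dependence is in the tuple $(\alpha_1,\dots,\alpha_n)$ (and in which earlier point realizes each backward minimum); these are handled by summing over at most $(K_0+2)^n\le C^n$ admissible exponent tuples and by the exact Voronoi-cell partition of the domain, which is precisely the paper's decomposition. With the order of integration now fixed as $dt^n,\dots,dt^1$, Lemma \ref{Lem:Bd_int} applies at each step and yields the stated bound.
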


\begin{proof}
By \eqref{Eq:local_time_incre_mom}, for any even integer $n \ge 2$ and $x, y \in \R^d$,
\begin{align}\label{Eq:moment_LT_incre}
\begin{aligned}
\E[(L(x, S) - L(y, S))^n]
= (2\pi)^{-nd} \int_{S^n} d\bar{t} \int_{\R^{nd}} d\bar{u} 
\prod_{j=1}^n \left[e^{-i\langle u^j, x\rangle} - e^{-i\langle u^j, y\rangle}\right] \E\left[e^{i\sum_{l=1}^n\langle u^l, X(t^l)\rangle}\right],
\end{aligned}
\end{align}
where $\bar{u} = (u^1, \dots, u^n) \in \R^{nd}$ and $\bar{t} = (t^1, \dots, t^n) \in S^n$.
Note that
\begin{equation}\label{Eq:ch_f}
\E\left[e^{i\sum_{l=1}^n\langle u^l, X(t^l)\rangle}\right] = \exp\Bigg[{-\frac{1}{2}\mathrm{Var}\Bigg(\sum_{l=1}^n
\sum_{k=1}^d u_k^l X_k(t^l)\Bigg)}\Bigg]
\end{equation}
for all $u^1, \dots, u^n \in \R^d$ and $t^1, \dots, t^n \in S$.
For any $0 < \gamma < 1$, we have $|e^{iu}-1| \le 2^{1-\gamma} |u|^\gamma$
and $|u+v|^\gamma \le |u|^\gamma + |v|^\gamma$ for all $u, v \in \R$.
It follows that
\begin{equation}\label{Eq:prod_exp_diff}
\prod_{j=1}^n \left| e^{-i\langle u^j, x\rangle} - e^{-i\langle u^j, y\rangle} \right| 
\le 2^{(1-\gamma)n} |x-y|^{n\gamma} \sum_{\bar{k}} \prod_{j=1}^n {\big|u_{k_j}^j\big|}^\gamma
\end{equation}
for all $u^1, \dots, u^n, x, y \in \R^d$, where the summation runs over all 
$\bar{k} = (k_1, \dots, k_n) \in \{1, \dots, d\}^n$.
Then \eqref{Eq:moment_LT_incre}, \eqref{Eq:ch_f} and \eqref{Eq:prod_exp_diff} imply that
\begin{align}\label{Eq:Bd_LT_incre}
\E[(L(x, S) - L(y, S))^n] \le (2\pi)^{-nd}\, 2^n |x-y|^{n\gamma} \sum_{\bar{k}}\int_{S^n} J(\bar{t}, \bar{k})\,d\bar{t},
\end{align}
where 
\[ J(\bar{t}, \bar{k}) =  \int_{\R^{nd}} \Bigg( \prod_{j=1}^n {\big|u_{k_j}^j\big|}^\gamma \Bigg)
 \exp\Bigg[{-\frac{1}{2}\mathrm{Var}\Bigg(\sum_{l=1}^n\sum_{k=1}^d u_k^l X_k(t^l)\Bigg)}\Bigg] d\bar{u} \]
for $\bar{t} \in S^n$ and $\bar{k} \in \{1, \dots, d\}^n$. 
We may and will assume that $(t^1, \dots, t^n) \in S^n$ are distinct points because those which are not distinct form 
a set of Lebesgue measure 0. By the generalized H\"{o}lder inequality,
\begin{align*}
J(\bar{t}, \bar{k})
& \le \prod_{j=1}^n \Bigg\{ \int_{\R^{nd}} {\big|u_{k_j}^j\big|}^{n\gamma} 
\exp\Bigg[{-\frac{1}{2}\mathrm{Var}\Bigg(\sum_{l=1}^n\sum_{k=1}^d u_k^l X_k(t^l)\Bigg)}\Bigg] d\bar{u} \Bigg\}^{1/n}.
\end{align*}
Fix $\bar{t}$, $\bar{k}$, and $j$.
By condition (A2), the random variables $\{X_k(t^l) : 1 \le l \le n, 1 \le k \le d\}$ are linearly independent.
Then by Lemma \ref{Lem:CD82} and the fact that $X_1, \dots, X_d$ are i.i.d.~copies of $Y$,
\begin{align*}
\int_{\R^{nd}} {\big|u_{k_j}^j\big|}^{n\gamma} \exp\Bigg[{-\frac{1}{2}\mathrm{Var}\Bigg(\sum_{l=1}^n\sum_{k=1}^d u_k^l 
X_k(t^l)\Bigg)}\Bigg] d\bar{u}
= \frac{(2\pi)^{(nd-1)/2}}{\det \mathrm{Cov}(Y(t^1), \dots, Y(t^n))^{d/2}} 
\int_\R {\left|\frac{v}{\sigma_j}\right|}^{n\gamma} e^{-v^2/2} dv,
\end{align*}
where 
\begin{align*}
\sigma_j^2 = \mathrm{Var}\left(X_{k_j}(t^j) \,\big| \,X_k (t^l) : (k,l) \ne (k_j, j)\right) = \mathrm{Var}\left(Y(t^j)\, \big|\, Y(t^l) : l \ne j\right).
\end{align*}
By Jensen's inequality and Gaussian moment estimates,
\[ \int_\R |v|^{n\gamma} e^{-v^2/2} dv 
\le \sqrt{2\pi}\left( \int_\R  \frac{1}{\sqrt{2\pi}}|v|^n e^{-v^2/2} dv \right)^{\gamma} 
= \sqrt{2\pi} \left((n-1)!!\right)^\gamma \le \sqrt{2\pi} (n!)^\gamma. \]
It follows that
\begin{equation}\label{Eq:Bd_J}
 J(\bar{t}, \bar{k}) \le \frac{C^n (n!)^\gamma}{\det\mathrm{Cov}(Y(t^1), \dots, Y(t^n))^{d/2}} \prod_{j=1}^n \frac{1}{\sigma_j^\gamma}.
\end{equation}
By the covariance formula \eqref{Eq:Cov_formula} and condition (A2), or \eqref{Eq:SLND_rho_tilde},
\begin{align*}
\int_{S^n} J(\bar{t}, \bar{k}) \,d\bar{t} 
\le C^n (n!)^{\gamma} \int_{S^n} \prod_{j=1}^n \left[ \min_{0 \le l \le j-1} \tilde\rho(t^j, t^l)\right]^{-d} 
\prod_{j=1}^n \left[ \min_{0 \le l \le n, l \ne j} \tilde\rho(t^j, t^l)\right]^{-\gamma}  d\bar{t}.
\end{align*}
To estimate the integral over $S^n$, we consider, for any $j \in \{2, \dots, n\}$ and any given distinct points 
$t^0, t^1, \dots, t^{j-1}$, the Voronoi partition $\{ \Gamma_{j, i} \}_{i=0}^{j-1}$ of $S$ generated by the points 
$t^0, t^1, \dots, t^{j-1}$ with respect to the anisotropic metric $\tilde\rho$, namely,
\begin{align*}
\Gamma_{j, i} = \left\{ t^j \in S : \tilde\rho(t^j, t^i) = \min_{0 \le l \le j-1} \tilde\rho(t^j, t^l) \right\}, \quad 0 \le i \le j-1.
\end{align*}
Then
\begin{align*}
&\int_{S^n} \prod_{j=1}^n \left[ \min_{0 \le l \le j-1} \tilde\rho(t^j, t^l)\right]^{-d} \prod_{j=1}^n 
\left[ \min_{0 \le l \le n, l \ne j} \tilde\rho(t^j, t^l)\right]^{-\gamma}  d\bar{t}\\
& = \sum_{i_2=0}^1 \dots \sum_{i_n=0}^{n-1}\int_{S}  \int_{\Gamma_{2, i_2}} \dots  
\int_{\Gamma_{n, i_n}} d\bar{t} \, \prod_{j=1}^n \tilde\rho(t^j, t^{i_j})^{-d} \prod_{j=1}^{n}
 \left[ \min_{0 \le l \le n, l \ne j} \tilde\rho(t^j, t^l)\right]^{-\gamma},
%  \tilde\rho(t^n, t^{i_n})^{-\gamma},
\end{align*}
where $i_1 = 0$.

Let $(t^1, t^2, \dots, t^n) \in S \times \Gamma_{2, i_2} \times \cdots \times \Gamma_{n,i_n}$.
For each $1 \le j \le n$, let $\alpha_n(j)$ be the largest index in $\{0, 1, \dots, n\} \setminus \{j\}$ such that
\[ \tilde\rho(t^j, t^{\alpha_n(j)}) = \min_{0 \le l \le n, l \ne j} \tilde\rho(t^j, t^l). \]
Let $m_n$ be the number of $j$'s in $\{1, \dots, n\}$ such that $\alpha_n(j) = n$.
By Lemma \ref{Lem:K}, we have $0 \le m_n \le K$, where $K$ is a universal bound depending only on $N$.
Then
\begin{align*}
\prod_{j=1}^{n} \min_{0 \le l \le n, l \ne j} \tilde\rho(t^j, t^l) 
&= \prod_{\substack{1\le j < n\\ \alpha_n(j) = n}}\tilde\rho(t^j, t^n) \prod_{\substack{1\le j \le n\\ \alpha_n(j) \ne n}}
 \min_{0 \le l \le n, l \ne j}\tilde\rho(t^j, t^l)\\
& \ge \left[\tilde\rho(t^n, t^{i_n})\right]^{m_n} \prod_{\substack{1\le j \le n\\ \alpha_n(j) \ne n}}\min_{0 \le l \le n, l \ne j}\tilde\rho(t^j, t^l),
\end{align*}
where in the last inequality, we have used the fact that $t^n \in \Gamma_{n, i_n}$ implies $ \tilde\rho(t^j, t^n)\ge \tilde\rho(t^n, t^{i_n})$ 
for all $j$.
Next, for $1 \le j \le n$ with $\alpha_n(j) \ne n$, we consider the largest index $\alpha_{n-1}(j)$ in $\{0, 1, \dots, n\} \setminus\{j\}$ 
such that
\[ 
\tilde\rho(t^j, t^{\alpha_{n-1}(j)}) = \min_{0 \le l \le n, l \ne j} \tilde\rho(t^j, t^l) 
\]
and the number $m_{n-1}$ of $j$'s in $\{1, \dots, n\}$ such that $\alpha_n(j) \ne n$ and $\alpha_{n-1}(j) = n-1$.
Inductively, we can find integers $m_{n-1}, \dots, m_2, m_1$, with $0 \le m_j \le K$ for all $j$, such that
\[m_1 + \dots + m_n = n \quad \text{and} \quad
\prod_{j=1}^{n} \min_{0 \le l \le n, l \ne j} \tilde\rho(t^j, t^l) \ge \prod_{j=1}^n \left[\tilde\rho(t^j, t^{i_j})\right]^{m_j}. \]
It follows that
\begin{align*}
&\int_{S^n} \prod_{j=1}^n \left[ \min_{0 \le l \le j-1} \tilde\rho(t^j, t^l)\right]^{-d} \prod_{j=1}^n 
\left[ \min_{0 \le l \le n, l \ne j} \tilde\rho(t^j, t^l)\right]^{-\gamma}  d\bar{t}\\
& \le \sum_{(m_1, \dots, m_n) \in {\mathcal M}} \,
\sum_{i_2=0}^1 \dots \sum_{i_n=0}^{n-1}\int_{S}  \int_{\Gamma_{2, i_2}} \dots\int_{\Gamma_{n, i_n}} d\bar{t} \, 
\prod_{j=1}^{n} \left[\tilde\rho(t^j, t^{i_j})\right]^{-d-m_j \gamma},
\end{align*}
where 
\[
{\mathcal M} = \big\{ (m_1, \dots, m_n) \in \{0, 1, \dots, K\}^n : m_1 + \dots + m_n = n \big\}.
\]
The cardinality of ${\mathcal M}$ is bounded by $(K+1)^n$.
Fix any $\beta_0 \in (d, Q)$. Then for all $\gamma \in (0, 1)$ small enough, we have
\[ d < d + K\gamma < \beta_0 < Q. \]
If we fix $(t^1, t^2, \dots, t^{n-1}) \in S \times \Gamma_{2, i_2} \times \cdots \times \Gamma_{n-1, i_{n-1}}$, 
then by \eqref{Eq:equiv_metric} and Lemma \ref{Lem:Bd_int},
\begin{align*}
\sum_{i_n=0}^{n-1}\int_{\Gamma_{n, i_n}} \left[\rho(t^n, t^{i_n})\right]^{-d-m_n\gamma} \, dt^n
\le C n^{d/Q+m_n\gamma/Q} \lambda_N(S)^{1-d/Q-m_n\gamma/Q}.
\end{align*}
Inductively, we integrate in the order of $dt^{n-1}, \dots, dt^1$ to deduce that
\begin{align*}
&\int_{S^n} \prod_{j=1}^n \Big[ \min_{0 \le l \le j-1} \tilde\rho(t^j, t^l)\Big]^{-d} \prod_{j=1}^n 
\Big[ \min_{0 \le l \le n, l \ne j} \tilde\rho(t^j, t^l)\Big]^{-\gamma}  d\bar{t}\\
&\le C^n \sum_{(m_1, \dots, m_n) \in M}\, \prod_{j=1}^n \Big[ j\,^{d/Q + K\gamma/Q} \Big] 
\lambda_N(S)^{n(1-d/Q) - (m_1+\dots+m_n)\gamma/Q}\\
& \le C^n (K+1)^n (n!)^{d/Q+K\gamma/Q} \lambda_N(S)^{n(1-(d+\gamma)/Q)}.
\end{align*}
Therefore, 
\begin{equation}\label{Eq:int_D^n}
\int_{S^n} J(\bar{t}, \bar{k}) \, d\bar{t} \le C^n (n!)^{d/Q+K\gamma/Q} \lambda_N(S)^{n(1-(d+\gamma)/Q)}.
\end{equation}
Note that this bound does not depend on $\bar{k}$.
Combining \eqref{Eq:Bd_LT_incre} and \eqref{Eq:int_D^n}, we have
\[ \E[(L(x, S) - L(y, S))^n] \le C^n |x-y|^{n\gamma} (n!)^{d/Q + K\gamma/Q} \lambda_N(S)^{n(1-(d+\gamma)/Q)}. \]
This completes the proof of Proposition \ref{Prop:mom_LT_incre}.
\end{proof}

We end this section with some lemmas, which will be useful later.

\begin{lemma}\label{LT:large_dev}
Suppose $X$ satisfies condition \textup{(A)} on $T$ and $d < Q$. % where $Q = \sum_{j=1}^N (1/H_j)$.
For any $b > 0$, there exists a finite constant $c$ such that the following hold.
\begin{enumerate}
\item[(i)] For all $a \in T$ and $0 < r < 1$ with $D:=B_\rho(a, r) \subset T$, for all $x \in \R^d$ and $u > 1$,
\begin{equation}\label{Lem:Eq1}
\P\left\{ L(x, D) \ge c\, r^{Q-d} u^{d/Q} \right\} \le \exp(-bu).
\end{equation}
\item[(ii)] For all $\gamma \in (0, 1)$ small enough, for all $a \in T$ and $0 < r < 1$ with 
$D:=B_\rho(a, r) \subset T$, for all $x, y \in \R^d$ with $|x-y|\le 1$, for all $u > 1$,
\begin{align}
\begin{aligned}\label{Lem:Eq2}
\P\left\{|L(x, D) - L(y, D)| \ge c\,|x-y|^{\gamma} r^{Q-d-\gamma}u^{d/Q + K\gamma/Q}\right\} \le \exp(-bu),
\end{aligned}
\end{align}
where $K = K(N) > 1$ is a finite constant.
\end{enumerate}
\end{lemma}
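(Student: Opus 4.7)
The plan is to derive both parts by combining Markov's inequality with the moment estimates of Propositions \ref{Prop:mom_LT} and \ref{Prop:mom_LT_incre}, and then optimizing the free integer parameter $n$ in terms of the deviation level $u$. This is the classical route from $L^n$-bounds with controlled growth in $n$ to exponential tail bounds, and here the powers of $n!$ appearing in those two estimates dictate exactly the exponents of $u$ that appear in the statement.

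For part (i), I would start from Markov's inequality: for every integer $n \ge 1$,
\[
\P\{L(x,D) \ge \lambda\} \le \lambda^{-n}\,\E[L(x,D)^n] \le \lambda^{-n} C^n (n!)^{d/Q} r^{n(Q-d)},
\]
where the last inequality is Proposition \ref{Prop:mom_LT}. Setting $\lambda = c\,r^{Q-d} u^{d/Q}$ cancels the factor $r^{n(Q-d)}$ and leaves $(C/c)^n (n!)^{d/Q} u^{-nd/Q}$. The natural choice is $n = \lceil u \rceil$; since $u > 1$ we have $u \le n \le 2u$, and the crude bound $n! \le n^n$ gives $(n!)^{d/Q} \le (2u)^{nd/Q}$. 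The right-hand side therefore collapses to $(2^{d/Q} C/c)^n$. Choosing $c$ so large that $2^{d/Q} C/c \le e^{-b}$ produces $\P\{\cdots\} \le e^{-bn} \le e^{-bu}$, as required.

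For part (ii), the argument is analogous, with Proposition \ref{Prop:mom_LT_incre} replacing Proposition \ref{Prop:mom_LT}. The only wrinkle is that that moment bound is formulated for even $n \ge 2$, so I would take $n$ to be the smallest even integer with $n \ge u$, which still satisfies $n \le u + 2 \le 3u$ when $u > 1$ (and automatically $n \ge 2$). With $\lambda = c\,|x-y|^{\gamma} r^{Q-d-\gamma} u^{d/Q + K\gamma/Q}$, Markov combined with Proposition \ref{Prop:mom_LT_incre} gives a bound of the form $(C/c)^n (n!)^{d/Q + K\gamma/Q} u^{-n(d/Q + K\gamma/Q)}$. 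Again $n! \le n^n \le (3u)^n$ converts this into $(3^{d/Q + K\gamma/Q} C/c)^n$, which is forced below $e^{-bu}$ by a suitably large choice of $c$ depending on $b$. The restriction that $\gamma \in (0,1)$ be small enough is inherited directly from Proposition \ref{Prop:mom_LT_incre}, and the assumption $|x-y| \le 1$ is used only to keep the constant $c$ independent of $x$ and $y$.

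No individual step is genuinely delicate; the substance of the argument is the alignment between the combinatorial factor in the moment estimate and the polynomial growth in $u$ that we want on the tail. The main point requiring care is to track precisely this matching, namely that $(n!)^{d/Q}$ is exactly compensated by $u^{nd/Q}$ (and similarly with $K\gamma/Q$ added in the second case) when one takes $n \asymp u$, so that the surplus reduces to a pure geometric factor $(C/c)^n$ which can be made as small as desired by enlarging $c$ in function of $b$.
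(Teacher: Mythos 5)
Your proposal is correct and follows essentially the same route as the paper: Markov's inequality applied to the $n$-th moment bounds of Propositions \ref{Prop:mom_LT} and \ref{Prop:mom_LT_incre}, with $n \asymp u$ chosen so that the $(n!)^{d/Q}$ (resp.\ $(n!)^{d/Q+K\gamma/Q}$) factor is absorbed by the corresponding power of $u$, leaving a geometric factor controlled by enlarging $c$. The paper phrases this by fixing the threshold at $A r^{Q-d} n^{d/Q}$ and invoking Stirling to get $\exp(-2bn)$ for all $n$, but the substance is identical to your argument.
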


\begin{proof}
Let $A > 0$ be a constant. By Chebyshev's inequality and Proposition \ref{Prop:mom_LT},
\begin{align*}
\P\left\{ L(x, D) \ge A r^{Q-d} n^{d/Q} \right\} 
\le \frac{\E[L(x, D)^n]}{(A r^{Q-d} n^{d/Q})^n}
\le \left(\frac{C}{A}\right)^n \left( \frac{n!}{n^n}\right)^{d/Q}.
\end{align*}
By Stirling's formula, given $b > 0$, we can choose $A$ large enough such that for all $n \ge 1$,
\[ \P\left\{ L(x, D) \ge A r^{Q-d} n^{d/Q} \right\} \le \exp(-2b n). \]
This implies \eqref{Lem:Eq1}.
Similarly, \eqref{Lem:Eq2} can be proved by using Proposition \ref{Prop:mom_LT_incre}.
\end{proof}

\begin{lemma}\label{Lem1}
Suppose $X$ satisfies condition \textup{(A)} on $T$ and $d < Q$.
Then, there exists a positive finite constant $C$ such that the following statements hold.
\begin{enumerate}
\item[(i)] For all $a \in T$ and $0 < r < 1$ with $D:=B_\rho(a, r) \subset T$, for all $x \in \R^d$, for all integers $n \ge 1$, 
\begin{equation}\label{Lem1:Eq1}
\E[L(x + X(a), D)^n] \le C^n (n!)^{d/Q} r^{n(Q-d)}.
\end{equation} 
\item[(ii)] For all $\gamma \in (0, 1)$ small enough, for all $a \in T$ and $0 < r < 1$ with $D:=B_\rho(a, r) \subset T$, 
for all $x, y \in \R^d$ with $|x-y|\le 1$, for all even integers $n \ge 2$, 
\begin{align}
\begin{aligned}\label{Lem1:Eq2}
\E[(L(x + X(a), D) - L(y + X(a), D))^n]
\le C^n |x-y|^{n\gamma} (n!)^{d/Q + K\gamma/Q} r^{n(Q-d-\gamma)},
\end{aligned}
\end{align}
where $K = K(N) > 1$ is a finite constant.
\end{enumerate}
\end{lemma}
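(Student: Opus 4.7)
The plan is to mimic the proofs of Propositions \ref{Prop:mom_LT} and \ref{Prop:mom_LT_incre}, replacing the Gaussian vectors $(Y(t^1), \dots, Y(t^n))$ throughout by the shifts $(Y(t^1) - Y(a), \dots, Y(t^n) - Y(a))$. Substituting $x \mapsto x + X(a)$ in \eqref{Eq:local_time_rep} and absorbing the factor $e^{-i\langle u, X(a)\rangle}$ into the spatial integral yields
\begin{equation*}
L(x + X(a), D) = (2\pi)^{-d} \int_{\R^d} du\, e^{-i\langle u, x\rangle} \int_D dt\, e^{i\langle u, X(t) - X(a)\rangle}.
\end{equation*}
Computing moments in exact analogy with \eqref{Eq:local_time_mom} and \eqref{Eq:local_time_incre_mom}, the determinant $\det \mathrm{Cov}(Y(t^1), \dots, Y(t^n))$ is replaced by $\det \mathrm{Cov}(Y(t^1) - Y(a), \dots, Y(t^n) - Y(a))$, and the moment estimates reduce to integral estimates involving this shifted determinant.

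The essential new input is a lower bound for the conditional variances $\sigma_m^2 := \mathrm{Var}(Y(t^m) - Y(a)\,|\,Y(t^1) - Y(a), \dots, Y(t^{m-1}) - Y(a))$ that arise from \eqref{Eq:Cov_formula}. The $\sigma$-algebra on the right is contained in the one generated by $\{Y(a), Y(t^1), \dots, Y(t^{m-1})\}$; since conditioning on a larger family decreases variance and $Y(a)$ becomes deterministic under the enlarged conditioning,
\begin{equation*}
\sigma_m^2 \,\ge\, \mathrm{Var}\bigl(Y(t^m)\,\big|\,Y(a), Y(t^1), \dots, Y(t^{m-1})\bigr) \,\ge\, C_3\, \min\bigl\{\rho^2(t^m, 0),\, \rho^2(t^m, a),\, \min_{1 \le k \le m-1} \rho^2(t^m, t^k)\bigr\}
\end{equation*}
by condition \textup{(A2)} applied to the augmented conditioning family. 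The same bound applies to the conditional variances $\mathrm{Var}(Y(t^j) - Y(a)\,|\,Y(t^l) - Y(a) : l \ne j)$ arising in the analogue of \eqref{Eq:Bd_J} for part (ii).

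For part (i), substituting this bound into the moment formula produces an integral over $D^n$ of $\prod_{m=1}^n [\min\{\rho(t^m, 0), \rho(t^m, a), \min_{1 \le k \le m-1}\rho(t^m, t^k)\}]^{-d}$. Inspection of the proof of Lemma \ref{Lem:Bd_int} reveals that the argument uses no special property of the distinguished point $t^0 = 0$; applying it to the $m+1$ points $\{0, a, t^1, \dots, t^{m-1}\}$ bounds each integration in $t^m$ by $C(m+1)^{d/Q} r^{Q-d}$. Iterating in the order $dt^n, \dots, dt^1$ yields $C^n ((n+1)!)^{d/Q} r^{n(Q-d)}$, and the polynomial factor $(n+1)^{d/Q}$ is absorbed into a new $C^n$ to give \eqref{Lem1:Eq1}. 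Part (ii) is obtained by running the proof of Proposition \ref{Prop:mom_LT_incre} verbatim---including the H\"older-type splitting of exponential differences, Lemma \ref{Lem:CD82}, and the multiplicity count via Lemma \ref{Lem:K}---with the Voronoi partitions of $D$ at the $j$-th stage now generated by $\{0, a, t^1, \dots, t^{j-1}\}$ rather than $\{0, t^1, \dots, t^{j-1}\}$; the universal constant $K = K(N)$ of Lemma \ref{Lem:K} is unaffected by the extra point $a$, and the additional combinatorial factors are absorbed into $C^n$.

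The only genuinely new step relative to Section \ref{sect:cont} is the chain of inequalities producing the lower bound for $\sigma_m^2$, so I expect the main technical care to go into verifying that conditioning on the increments $Y(t^l) - Y(a)$ gives a variance at least as large as conditioning on $\{Y(a)\} \cup \{Y(t^l)\}_{l < m}$ and that the additional point $a$ in the Voronoi partitions only affects constants; once these are in place, all other estimates carry over mechanically from Propositions \ref{Prop:mom_LT} and \ref{Prop:mom_LT_incre}.
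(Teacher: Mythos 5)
Your proposal is correct and follows essentially the same route as the paper: the paper's proof likewise introduces the shifted field $\tilde X(t) = X(t) - X(a)$, notes that its local time is $L(x+X(a),\cdot)$, verifies (A1) and a modified (A2) with the extra point $a$ adjoined to the minimum, and then reruns Propositions \ref{Prop:mom_LT} and \ref{Prop:mom_LT_incre} with a correspondingly modified Lemma \ref{Lem:Bd_int}. Your write-up in fact supplies more detail than the paper's three-sentence sketch, in particular the conditional-variance chain justifying the modified (A2).
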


\begin{proof}
For any fixed $a \in T$, consider the Gaussian random field $\tilde{X}(t) = X(t) - X(a)$.
If $X$ has a local time $L(x, S)$ on $S$, then $\tilde{X}$ also has a local time $\tilde{L}(x, S)$ on $S$, and it follows 
from \eqref{Eq:ODF} that $\tilde{L}(x, S) = L(x + X(a), S)$. Moreover, $\tilde{X}$ satisfies 
Condition (A1) and a slightly different version of condition (A2) with the inequality
\[ \mathrm{Var}(\tilde{Y}(t)|\tilde{Y}(t^1), \dots, \tilde{Y}(t^n)) \ge C_3 \min\{ \rho^2(t, t^k): t \in \{ a, t^0, t^1, \dots, t^n \} \}. \]
With little modification for Lemma \ref{Lem:Bd_int}, the proofs of Proposition \ref{Prop:mom_LT} and \ref{Prop:mom_LT_incre} 
can be carried over to the Gaussian field $\tilde{X}$ to obtain \eqref{Lem1:Eq1} and \eqref{Lem1:Eq2}.
\end{proof}

\begin{lemma}\label{Lem2}
Suppose $X$ satisfies condition \textup{(A)} on $T$ and $d < Q$. % where $Q = \sum_{j=1}^N (1/H_j)$.
For any $b > 0$, there exists a finite constant $c$ such that the following hold.
\begin{enumerate}
\item[(i)] For all $a \in T$ and $0 < r < 1$ with $D:=B_\rho(a, r) \subset T$, for all $x \in \R^d$ and $u > 1$,
\begin{equation}\label{Lem2:Eq1}
\P\left\{ L(x + X(a), D) \ge c\, r^{Q-d} u^{d/Q} \right\} \le \exp(-bu).
\end{equation}
\item[(ii)] For all $\gamma \in (0, 1)$ small enough, for all $a \in T$ and $0 < r < 1$ with $D:=B_\rho(a, r) \subset T$, 
for all $x, y \in \R^d$ with $|x-y|\le 1$, for all $u > 1$,
\begin{align}
\begin{aligned}\label{Lem2:Eq2}
\P\Big\{|L(x + X(a)&, D) - L(y + X(a), D)|
\ge c\, |x-y|^{\gamma} r^{Q-d-\gamma}u^{d/Q + K\gamma/Q}\Big\} \le \exp(-bu),
\end{aligned}
\end{align}
where $K = K(N) > 1$ is a finite constant.
\end{enumerate}
\end{lemma}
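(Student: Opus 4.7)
The plan is to obtain Lemma~\ref{Lem2} from the moment estimates in Lemma~\ref{Lem1} by the same Chebyshev-plus-Stirling argument that already yielded Lemma~\ref{LT:large_dev} from Propositions~\ref{Prop:mom_LT} and~\ref{Prop:mom_LT_incre}. The statements of Lemma~\ref{Lem1} and Lemma~\ref{Lem2} have exactly the same relationship as the pairs (Proposition~\ref{Prop:mom_LT}, part~(i) of Lemma~\ref{LT:large_dev}) and (Proposition~\ref{Prop:mom_LT_incre}, part~(ii) of Lemma~\ref{LT:large_dev}), with the occupation field shifted by $X(a)$, so essentially nothing new is required.

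For part~(i), I would fix $b>0$ and an integer $n\ge 1$, apply Markov's inequality to the nonnegative random variable $L(x+X(a),D)$ at level $Ar^{Q-d}n^{d/Q}$, and invoke \eqref{Lem1:Eq1}: this gives
\[
\P\bigl\{L(x+X(a),D)\ge Ar^{Q-d}n^{d/Q}\bigr\}\le \Bigl(\frac{C}{A}\Bigr)^{n}\Bigl(\frac{n!}{n^{n}}\Bigr)^{d/Q}.
\]
Stirling's formula shows $n!/n^{n}\le C'e^{-n}\sqrt{n}$, so after choosing $A=A(b)$ large enough (depending on $C$, $d$, $Q$ and $b$) the right-hand side is bounded by $\exp(-2bn)$ for every $n\ge 1$. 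Given $u>1$, I would then pick $n=\lceil u\rceil$, so that $n^{d/Q}\le (2u)^{d/Q}$ and $\exp(-2bn)\le\exp(-bu)$ (after enlarging the constant in front of $r^{Q-d}u^{d/Q}$); this delivers \eqref{Lem2:Eq1} with $c=2^{d/Q}A$.

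For part~(ii), the argument is structurally identical but uses the increment estimate \eqref{Lem1:Eq2} with an even integer $n\ge 2$. Markov's inequality gives
\[
\P\Bigl\{|L(x+X(a),D)-L(y+X(a),D)|\ge A|x-y|^{\gamma}r^{Q-d-\gamma}n^{d/Q+K\gamma/Q}\Bigr\}\le \Bigl(\frac{C}{A}\Bigr)^{n}\Bigl(\frac{n!}{n^{n}}\Bigr)^{d/Q+K\gamma/Q},
\]
and Stirling again absorbs $(n!/n^{n})^{d/Q+K\gamma/Q}$ into $\exp(-2bn)$ once $A$ is taken large enough (uniformly in the small parameter $\gamma$). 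Choosing $n$ to be the smallest even integer with $n\ge u$ and adjusting the multiplicative constant yields \eqref{Lem2:Eq2}.

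I do not expect any genuine obstacle: the only minor points are that one must check that the Stirling factor can be absorbed uniformly in the exponent $d/Q+K\gamma/Q$ (which is bounded above, say by $d/Q+K$, for the admissible range of $\gamma$), and that the passage from integer $n$ to the continuous parameter $u>1$ costs only a harmless constant. Everything else is a verbatim repetition of the proof of Lemma~\ref{LT:large_dev}, with Lemma~\ref{Lem1} playing the role of Propositions~\ref{Prop:mom_LT} and~\ref{Prop:mom_LT_incre}.
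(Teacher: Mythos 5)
Your proposal is correct and is essentially the paper's own argument: the paper proves Lemma \ref{Lem2} by exactly the same Chebyshev--Stirling reduction from the moment bounds of Lemma \ref{Lem1}, just as Lemma \ref{LT:large_dev} is deduced from Propositions \ref{Prop:mom_LT} and \ref{Prop:mom_LT_incre}. Your extra remarks on choosing $n\approx u$, handling even $n$ in part (ii), and the uniformity in $\gamma$ of the Stirling absorption are the details the paper leaves implicit, and they check out.
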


\begin{proof}
As in Lemma \ref{LT:large_dev}, the proof is based on Lemma \ref{Lem1} and Chebyshev's inequality.
\end{proof}

\section{H\"{o}lder conditions for the local times}
\label{sect:Holder_cond}

In this section, we study local and global H\"older conditions in the set variable for the local times, and discuss related 
sample path properties including Chung's law of the iterated logarithm and the modulus of non-differentiability.

From Lemma \ref{LT:large_dev} and the Borel--Cantelli lemma, one can easily derive the following law of the iterated 
logarithm for the local time $L(x, \cdot)$: there exists a finite constant $C$ such that for any $x \in \R^d$ and $t \in T$, 
\begin{equation}\label{LIL:LT}
\limsup_{r \to 0} \frac{L(x, B_\rho(t, r))}{\varphi(r)} \le C \quad \text{a.s.,} \quad \text{where} \quad \varphi(r) = 
r^{Q-d}(\log\log(1/r))^{d/Q}.
\end{equation}
Recall that $Q = \sum_{j=1}^N (1/H_j)$.

It follows from Fubini's theorem that with probability one, \eqref{LIL:LT} holds for $\lambda_N$-almost every $t \in T$.
Next, we are going to prove a stronger version of this result, which will be useful later for determining the exact 
Hausdorff measure of the level sets of $X$.

It follows from Theorem \ref{Th:joint_cont_LT} that if $X$ satisfies condition (A) on $T$ and $d < Q$, 
then $X$ has a jointly continuous local time on $T$. 
From now on, we always use the jointly continuous version for the local time whenever it exists, and still denote 
it by $L(x, \cdot)$.

\begin{theorem}\label{Thm:Holder_LT}
Suppose $X$ satisfies condition \textup{(A)} on $T$ and $d < Q$.
Then there exists a finite constant $C$ such that for any $x \in \R^d$,
with probability 1, 
\begin{equation}\label{Eq:Holder_LT}
\limsup_{r \to 0} \frac{L(x, B_\rho(t, r))}{\varphi(r)} \le C
\end{equation}
for $L(x, \cdot)$-almost every $t \in T$, where $\varphi(r) = r^{Q-d}(\log\log(1/r))^{d/Q}$.
\end{theorem}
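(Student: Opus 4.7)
\smallskip

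The plan is to establish the bound along the geometric sequence $r_n = 2^{-n}$ by a Borel--Cantelli argument applied to the random measure $L(x,\cdot)$, and then to extend the conclusion to all $r \to 0$ by monotonicity. Fix a constant $A > 0$ (to be chosen large later) and set
\[
G_n := \bigl\{t \in T : L(x, B_\rho(t, r_n)) > A\varphi(r_n)\bigr\}.
\]
It suffices to prove $\sum_n \E[L(x, G_n)] < \infty$, for then $\sum_n L(x, G_n) < \infty$ almost surely, and hence $L(x, \limsup_n G_n) = 0$ almost surely, which yields \eqref{Eq:Holder_LT} along $r_n$ with $C$ proportional to $A$. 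The passage to arbitrary $r \to 0$ is standard: for $r \in [r_{n+1}, r_n]$ one has $L(x, B_\rho(t, r)) \le L(x, B_\rho(t, r_n))$ and $\varphi(r_n)/\varphi(r_{n+1})$ is bounded uniformly in $n$.

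To bound $\E[L(x, G_n)]$, I would apply Chebyshev's inequality pointwise in $t$ and integrate against $L(x,\cdot)$: for any integer $m \ge 1$,
\[
L(x, G_n) \le (A\varphi(r_n))^{-m}\int_T L(x, B_\rho(t, r_n))^m \, L(x, dt).
\]
Writing $L(x, B_\rho(t,r_n))^m L(x, dt) = \int_{B_\rho(t,r_n)^m} L(x, ds^1)\cdots L(x, ds^m) L(x, dt)$ and applying the standard joint moment identity for a product of $m+1$ copies of $L(x,\cdot)$ (together with the bound on the joint Gaussian density $p_{X(s^1),\dots,X(s^m),X(t)}(x,\dots,x) \le (2\pi)^{-(m+1)d/2}[\det \mathrm{Cov}(Y(s^1),\dots,Y(s^m),Y(t))]^{-d/2}$, condition (A2), and \eqref{Eq:Cov_formula}), one obtains
\[
\E\!\left[\int_T L(x, B_\rho(t, r_n))^m L(x, dt)\right] \le C^{m+1} \int_T dt \int_{B_\rho(t,r_n)^m} \prod_{j=1}^{m+1}\Bigl[\min_{0 \le l \le j-1}\rho(s^j, s^l)\Bigr]^{-d} d\bar s,
\]
with the conventions $s^{m+1} = t$ and $s^0 = 0$.

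The core step is to estimate this multi-integral while exploiting the clustering constraint $s^j \in B_\rho(t, r_n)$ for $j \le m$. My plan is to change the order of integration so that $t$ is integrated out first: the admissible region for $t$ is contained in $B_\rho(s^1, r_n)$, and Lemma \ref{Lem:Bd_int} applied to the $m+1$ points $s^0, s^1,\dots,s^m$ yields a factor $C(m+1)^{d/Q} r_n^{Q-d}$. The remaining constraint on $\bar s$ is then $\mathrm{diam}_\rho(s^1,\dots,s^m) \le 2 r_n$, so fixing $s^1 \in T$ and integrating $s^m, s^{m-1}, \dots, s^2$ successively over $B_\rho(s^1, 2r_n)$ via iterated applications of Lemma \ref{Lem:Bd_int} produces $C^{m-1}(m!)^{d/Q} r_n^{(m-1)(Q-d)}$, and the outer integral $\int_T \rho(s^1, 0)^{-d} ds^1$ is a finite constant (since $d < Q$). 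Altogether,
\[
\E\!\left[\int_T L(x, B_\rho(t, r_n))^m L(x, dt)\right] \le C^{m+1}((m+1)!)^{d/Q} r_n^{m(Q-d)},
\]
and hence $\E[L(x, G_n)] \le C(C/A)^m ((m+1)!)^{d/Q}(\log\log(1/r_n))^{-md/Q}$. Choosing $m = \lfloor \log\log(1/r_n)\rfloor$ and using Stirling's formula, this quantity behaves like $(Ce^{-d/Q}/A)^m$ up to a polynomial correction in $m$; for $A > Ce^{2-d/Q}$ it is $O(n^{-2})$, which is summable.

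The main obstacle I anticipate is precisely this multi-integral estimate: a naive bound of the $\bar s$-integration over $T^m$ (as in Proposition \ref{Prop:mom_LT}) retains only a single factor of $r_n^{Q-d}$ and would be fatal. The essential device is to integrate $t$ out first, using the clustering to convert the $\bar s$-integrations into genuine powers of $r_n$ rather than of $\lambda_N(T)$. A secondary, more routine, technicality is the rigorous passage from the pointwise Chebyshev bound to the joint moment identity when the integrand itself depends on the local time, which is handled by the product representation noted above and the jointly continuous version of $L$.
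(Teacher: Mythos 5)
Your proposal is correct and follows essentially the same route as the paper's proof: both estimate $L\bigl(x,\{t\in T: L(x,B_\rho(t,2^{-n}))>A\varphi(2^{-n})\}\bigr)$ by Chebyshev's inequality for the measure $L(x,\cdot)$ with moment order of size $\log\log 2^{n}$, establish the key bound $\E\int_T L(x,B_\rho(t,r))^{m}\,L(x,dt)\le C^{m+1}((m+1)!)^{d/Q}r^{m(Q-d)}$ by combining condition (A2), \eqref{Eq:Cov_formula}, and iterated applications of Lemma \ref{Lem:Bd_int} over the small balls, and conclude with Borel--Cantelli plus a monotonicity argument in $r$. The only differences are minor: the paper integrates the clustered variables over $B_\rho(t,r)$ before integrating $t$ out (rather than eliminating $t$ first as you do), and it justifies the joint moment identity that you invoke as ``standard'' by passing through the approximating measures $L_k(x,\cdot)$ of \eqref{Def:L_k} and their a.s.\ weak convergence to $L(x,\cdot)$.
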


\begin{proof}
The proof is similar to that of Proposition 4.1 in \cite{X97} and Theorem 8.10 in \cite{X09}.
For any $x \in \R^d$ and any integer $k \ge 1$, consider the random measure $L_k(x, \cdot)$ on Borel subsets 
$B$ of $T$ defined by
\begin{align}
\begin{aligned}\label{Def:L_k}
L_k(x, B) &= \int_B \left(\frac{k}{2\pi}\right)^{d/2} \exp\left({-\frac{k|X(t) - x|^2}{2}}\right) \, dt\\
& = \int_B \int_{\R^d} \frac{1}{(2\pi)^d} \exp\left( -\frac{|u|^2}{2k} + i\langle u, X(t) - x\rangle \right) du\, dt.
\end{aligned}
\end{align}
By equation \eqref{Eq:local_time_rep}, we see that a.s., for all $B$, $L_k(x, B) \to L(x, B)$ as $k \to \infty$.
It follows that a.s., $L_k(x, \cdot)$ converges weakly to $L(x, \cdot)$.

For each $m \ge 1$, define $f_m(t) = L(x, B_\rho(t, 2^{-m}))$.
By Propositions \ref{Prop:mom_LT} and \ref{Prop:mom_LT_incre}, and the multiparameter version of Kolmogorov's 
continuity theorem \cite{K}, $f_m(t)$ is a.s.~bounded and continuous on $T$.
Then by the a.s.~weak convergence of $L_k(x, \cdot)$, for all $m, n \ge 1$,
\[ \int_T\, [f_m(t)]^n L(x, dt) = \lim_{k \to \infty} \int_T \, [f_m(t)]^n L_{k}(x, dt) \quad \text{a.s.} \]
Hence, by the dominated convergence theorem, \eqref{Def:L_k} and \eqref{Eq:local_time_mom}, we have
\begin{align*}
&\quad \E \int_T [L(x, B_\rho(t, 2^{-m}))]^n L(x, dt)\\
& =\frac{1}{(2\pi)^d} \lim_{k \to \infty} \E \int_T dt \int_{\R^d} du \, \exp\left( -\frac{|u|^2}{2k} + i \langle u, 
X(t) - x\rangle \right) \left[L(x, B(t, 2^{-m})) \right]^n\\
& = \frac{1}{(2\pi)^d}\int_T dt \int_{\R^d} du \, \E \big[e^{i \langle u, X(t) - x\rangle} L(x, B(t, 2^{-m}))^n \big]\\
& = \frac{1}{(2\pi)^{(n+1)d}} \int_T \int_{{B_\rho(t, 2^{-m})}^n}d\bar{s} \int_{\R^{(n+1)d}} d\bar{u}\, e^{-i\sum_{\ell=1}^{n+1} 
\langle x, u^\ell \rangle} \E \left(e^{i\sum_{\ell=1}^{n+1} \langle u^\ell, X(s^\ell)\rangle}\right),
\end{align*}
where $\bar{u} = (u^1, \dots, u^{n+1}) \in \R^{(n+1)d}$, $\bar{s} = (t, s^2,\dots, s^{n+1})$ and $s^1 = t$.
Similar to the proof of Proposition \ref{Prop:mom_LT}, we can deduce that
\begin{align}\begin{aligned}\label{Eq:E_integral_f}
&\quad \E\int_T [L(x, B_\rho(t, 2^{-m}))]^n L(x, dt)\\
& \le C^n \int_{T \times {B_\rho(t, 2^{-m})}^n} \frac{d\bar{s}}{[\det \mathrm{Cov}(Y(t), Y(s^2), \dots, 
Y(s^{n+1}))]^{d/2}}\\
& \le C^n (n!)^{d/Q} 2^{-nm (Q-d)}.
\end{aligned}\end{align}

Let $A > 0$ be a constant whose value will be determined. Consider the random set
\[ B_m = \{ t \in T : L(x, B_\rho(t, 2^{-m})) > A \varphi(2^{-m}) \}. \]
Consider the random measure $\mu$ on $T$ defined by 
$\mu(B) = L(x, B)$ for any $B \in \mathscr{B}(T)$.
Take $n = \lfloor \log m \rfloor$, the integer part of $\log m$.
Then by \eqref{Eq:E_integral_f} and Stirling's formula,
\begin{align*}
\E\,\mu(B_m) &\le \frac{\E\int_T [L(x, B_\rho(t, 2^{-m}))]^n L(x, dt)}{[A\varphi(2^{-m})]^n}\\
& \le \frac{C^n(n!)^{d/Q}2^{-nm(Q-d)}}{A^n 2^{-nm(Q-d)}(\log m)^{nd/Q}} \le m^{-2}
\end{align*}
provided $A > 0$ is chosen large enough. This implies that
\[ \E \sum_{m=1}^\infty \mu(B_m) < \infty. \]
By the Borel--Cantelli lemma, with probability 1, for $\mu$-a.e.~$t \in T$, we have
\begin{equation}\label{Eq:Holder_LT_m}
\limsup_{m \to \infty} \frac{L(x, B_\rho(t, 2^{-m}))}{\varphi(2^{-m})} \le A.
\end{equation}
For any $r > 0$ small enough, there exists an integer $m$ such that $2^{-m} \le r < 2^{-m+1}$ and 
\eqref{Eq:Holder_LT_m}  can be applied. Since $\varphi(r)$ is increasing near $r = 0$, we can use 
a monotonicity argument to obtain \eqref{Eq:Holder_LT}.
\end{proof}

Next, we study the local and global H\"older conditions for $L^*$, the supremum of the local time 
defined by
\[ L^*(B) = \sup_{x \in \R^d} L(x, B), \quad B \in \mathscr{B}(\R^N). \]

\begin{theorem}\label{Th:Holder}
Suppose $X$ satisfies condition \textup{(A)} on $T$ and $d < Q$. % where $Q = \sum_{j=1}^N (1/H_j)$. 
Then there exist finite constants $C$ and $C'$ such that for any $t \in T$,
\begin{equation}\label{Eq:Holder1}
  \limsup_{r \to 0}  \frac{L^*(B_\rho(t, r))}{\varphi(r)} \le C \quad \text{a.s.,} \quad \text{where} \quad \varphi(r) = r^{Q-d}(\log\log(1/r))^{d/Q},
\end{equation}
and 
\begin{equation}\label{Eq:Holder2}
\limsup_{r \to 0} \sup_{t \in T} \frac{L^*(B_\rho(t, r))}{\Phi(r)} \le C' \quad \text{a.s.,} \quad \text{where} \quad \Phi(r) = r^{Q-d}(\log(1/r))^{d/Q}.
\end{equation}
\end{theorem}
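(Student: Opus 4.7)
The plan is to reduce both bounds to the large-deviation estimates of Lemma \ref{Lem2} for $L(y + X(a), D)$, combined with a multi-scale chaining argument over a mesh in the $y$-variable. For a ball $D = B_\rho(a, r) \subset T$, the local time $L(x, D)$ vanishes outside the closure of $X(D)$, so writing $y = x - X(a)$,
\[
L^*(D) \;=\; \sup_{y \in \R^d} L(y + X(a), D) \;=\; \sup_{|y| \le R} L(y + X(a), D),
\]
where $R$ may be taken as any upper bound of $\sup_{s \in D}|X(s) - X(a)|$. From condition (A1) and standard Gaussian maximal inequalities, one has $\sup_{s \in D}|X(s) - X(a)| \le C r \sqrt{\log(1/r)}$ with probability exceeding $1 - \exp(-c \log(1/r))$, which is enough for \eqref{Eq:Holder2}; for \eqref{Eq:Holder1} an LIL-type estimate along $r_n = 2^{-n}$ gives the sharper bound $\sup_{s \in D_n}|X(s) - X(a)| \le C r_n \sqrt{\log\log(1/r_n)}$ almost surely for all large $n$ by Borel--Cantelli.

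To prove \eqref{Eq:Holder1}, fix $t \in T$, let $r_n = 2^{-n}$ and $D_n = B_\rho(t, r_n)$, and cover $\{|y| \le R_n\}$ by a mesh $\mathcal{N}_n$ of spacing $\delta_n$, so $\#\mathcal{N}_n \le C(R_n/\delta_n)^d$. Apply Lemma \ref{Lem2}(i) with $u = A \log\log(1/r_n)$ at each mesh point: for $A$ large enough, a union bound over $\mathcal{N}_n$ shows that the event
\[
\max_{y \in \mathcal{N}_n} L(y + X(t), D_n) > c_1\, \varphi(r_n)
\]
has probability summable in $n$. For the oscillation, apply Lemma \ref{Lem2}(ii) with the same $u$ along chaining pairs; choosing $\delta_n = r_n^{1/\gamma}$ for sufficiently small $\gamma > 0$ makes the increment bound $|y - y'|^\gamma r_n^{Q-d-\gamma} u^{d/Q + K\gamma/Q}$ of order at most $c_2\, \varphi(r_n)$ even after summing over the polylog-many pairs. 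Borel--Cantelli then yields $L^*(D_n) \le C\, \varphi(r_n)$ for all large $n$ a.s., and monotonicity of $\varphi$ near $r = 0$ fills in radii $r \in [r_{n+1}, r_n]$.

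For \eqref{Eq:Holder2} the same chaining is used, but $T$ is also discretized: cover $T$ by $\lesssim r_n^{-Q}$ points $\{t_i\}$ such that every $t \in T$ lies within $\rho$-distance $r_n$ of some $t_i$, so $B_\rho(t, r_n) \subset B_\rho(t_i, 2r_n)$ and hence $\sup_{t \in T} L^*(B_\rho(t, r_n)) \le \sup_i L^*(B_\rho(t_i, 2r_n))$. Now run the previous argument at each center $t_i$ with $u = A \log(1/r_n)$; the number of balls and the number of mesh points are both polynomial in $1/r_n$, so a union bound and Borel--Cantelli yield \eqref{Eq:Holder2} with $\Phi(r_n)$ replacing $\varphi(r_n)$.

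The main obstacle is the parameter balancing in the chaining: the increment estimate in Lemma \ref{Lem2}(ii) loses a factor $u^{K\gamma/Q}$ with a universal but possibly large $K$, while the mesh cardinality grows polynomially in $1/\delta_n$. One must choose $\gamma$ small (and $\delta_n$ correspondingly fine) so that both the union bound of point-value estimates and the sum of increment estimates remain $O(\varphi(r_n))$ in the local case and $O(\Phi(r_n))$ in the global case. This is exactly the tension that made the sharp estimates in Proposition \ref{Prop:mom_LT_incre}, together with the universal bound in Lemma \ref{Lem:K}, indispensable.
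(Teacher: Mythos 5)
Your overall strategy is the one the paper uses: reduce $L^*(B_\rho(a,r))$ to $\sup_{|y|\le R}L(y+X(a),B_\rho(a,r))$ via a Gaussian maximal bound for $\sup_{s}|X(s)-X(a)|$, discretize the $y$-variable, control mesh points by Lemma \ref{Lem2}(i) and oscillations by chaining with Lemma \ref{Lem2}(ii), and finish with Borel--Cantelli (plus a discretization of $T$ for the global bound). But there is a genuine quantitative gap in the local case \eqref{Eq:Holder1}: your mesh spacing $\delta_n=r_n^{1/\gamma}$ is far too fine. To get the right power of $\log\log$ you must take $u\asymp\log\log(1/r_n)\approx\log n$ in Lemma \ref{Lem2}(i), and then the per-point tail $\exp(-bu)$ is only \emph{polynomially} small in $n$ (of order $n^{-bA}$), no matter how large $b$ and $A$ are. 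Your mesh has cardinality of order $(R_n/\delta_n)^d\asymp 2^{nd(1/\gamma-1)}$, which is exponential in $n$, so the union bound for $\max_{y\in\mathcal N_n}L(y+X(t),D_n)$ diverges; the same problem kills the claim that there are only ``polylog-many pairs'' in the chaining. (Your global argument \eqref{Eq:Holder2} survives because there $u\asymp\log(1/r_n)=n\log 2$ gives exponentially small tails that beat a polynomial-in-$2^n$ union bound.)

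The correct balancing, which is the actual content of the paper's proof, is to make the mesh only \emph{polylogarithmically} finer than $r_n$: take spacing $\theta_n=2^{-n}(\log\log 2^n)^{-K}$ with $K$ the universal constant from Lemma \ref{Lem:K}, so that $G_n$ has at most $C(\log n)^{(K+1)d}$ points and the union bound with tails $n^{-2}$ is summable. The residual gap between an arbitrary $y$ and $G_n$ is then closed by a dyadic chaining \emph{inside} each cell of side $\theta_n$: at level $k$ one applies Lemma \ref{Lem2}(ii) with $u=k\log n$, so the failure probability $\exp(-2k\log n)=n^{-2k}$ dominates the $2^{kd}$ linked pairs, and the increment bound
\[
c\,2^{-n(Q-d-\gamma)}(\theta_n 2^{-k})^{\gamma}(k\log n)^{d/Q+K\gamma/Q}
\]
sums over $k$ to $O(\varphi(2^{-n}))$ precisely because the factor $\theta_n^{\gamma}=2^{-n\gamma}(\log\log 2^n)^{-K\gamma}$ cancels both the $2^{n\gamma}$ and the $u^{K\gamma/Q}$ loss. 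So the resolution of the tension you identify in your last paragraph is not ``$\delta_n$ polynomially fine,'' but ``$\theta_n$ polylogarithmically fine with exponent tied to $K$, and $u$ growing linearly in the chaining level.'' With that substitution your outline becomes the paper's proof.
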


%\begin{remark}\label{Re:optimal}
%Under Assumption 2.1 in \cite{DMX17} and Condition (A2) in the present paper,  Lee and Xiao
%\cite{LeeX21} proved Chung's LIL 
%and determine the exact modulus of non-differentiability for $X$, 
%see \cite{LeeX21, WangX21}. These results, 
%This result, together with the following inequality:  For any $ t \in T$,
%\begin{equation}\label{Eq:occupationS}
%\begin{split} 
% \lambda_N\big(B_\rho(t, r) \big) &= \int_{\overline{X\big(B_\rho(t, r)\big)}} L(x, B_\rho(t, r))\ dx \\
%&\le L^*(B_\rho(t, r)) \cdot  \Big(\sup_{s, t \in Q}  | X(s) - X(t)|\Big)^d, 
%\end{split}
%\end{equation}
%imply that the inequality (\ref{Eq:Holder1}) %and (\ref{Eq:Holder2}) 
%is optimal. Namely, the limits in 
%(\ref{Eq:Holder1}) and (\ref{Eq:Holder2}) are bounded from below by positive constants.
%\end{remark}

\begin{proof}
As in \cite{E81, X97}, the proof of Theorem \ref{Th:Holder} is based on Lemma \ref{Lem2} and a 
chaining argument. We will give a sketch of the proof with necessary modifications.

In order to prove \eqref{Eq:Holder1}, it suffices to show that for any $a \in T$, 
\begin{equation}\label{Holder1}
\limsup_{n \to \infty} \frac{L^*(B_n)}{\varphi(2^{-n})} \le C \quad \textup{a.s.}
\end{equation}
where $B_n = B_\rho(a, 2^{-n})$.
We divide the proof of \eqref{Holder1} into four steps.

(1). By Lemma 2.1 of Talagrand \cite{T95}, there exist positive constants $c_1$ and $c_2$ such that 
for any $r \in (0, 1)$ and $u > c_1 r$,
\[ \P\left\{ \sup_{t \in B_\rho(a, r)} |X(t) - X(a)| \ge u \right\} \le \exp\left(- c_2 (u/r)^2\right). \]
Taking $u_n = 2^{-n}\sqrt{2c_2^{-1} \log n}$, we have
\[ \P\left\{ \sup_{t \in B_n} |X(t) - X(a)| \ge u_n \right\} \le n^{-2}. \]
It follows from the Borel--Cantelli lemma that almost surely, for all $n$ large,
\begin{equation}\label{ineq:X}
\sup_{t \in B_n} |X(t) - X(a)| \le u_n.
\end{equation}

(2). Let $\theta_n = 2^{-n}(\log\log 2^n)^{-K}$, where $K > 1$ is the constant in \eqref{Lem2:Eq1}. Define
\begin{equation*}
G_n = \left\{ x \in \R^d : |x| \le  u_n \textup{ with } x = \theta_n p \textup{ for some } p \in \Z^d \right\}.
\end{equation*}
Then, when $n$ is large enough, the cardinality of $G_n$ satisfies
\[ \# (G_n) \le C (\log n)^{(K+1)d}. \]
It follows from \eqref{Lem2:Eq1} of Lemma \ref{Lem2} (with $b = 2$) that we can find a finite constant $c$ 
such that for all $n$ large, 
\[ \P\left\{ \max_{x \in G_n} L(x + X(a), B_n) \ge c\, \varphi(2^{-n}) \right\} \le C(\log n)^{(K+1)d}\, n^{-2}. \]
By the Borel--Cantelli lemma, almost surely, for all $n$ large, 
\begin{equation}\label{max_LT}
\max_{x \in G_n} L(x + X(a), B_n) \le c\, \varphi(2^{-n}).
\end{equation}

(3). For integers $n, k \ge 1$ and $x \in G_n$, define
\begin{equation*}
F(n,k,x) = \bigg\{ y \in \R^d : y = x + \theta_n \sum_{j=1}^k \eps_j 2^{-j}, \eps_j \in \{0, 1\}^d \textup{ for } 1 \le j \le k \bigg\}.
\end{equation*}
A pair of points $y_1, y_2 \in F(n,k,x)$ is said to be linked if $y_2 - y_1 = \theta_n \eps 2^{-k}$ for some $\eps \in \{0, 1\}^d$.
Consider the event $F_n$ defined by
\begin{align*}
F_n = \bigcup_{x \in G_n} \bigcup_{k\ge 1} \bigcup_{y_1, y_2} \Big\{ |L(y_1 + X(a), B_n) - L(y_2 + X(a), B_n)|
 \ge c\,2^{-n(Q-d-\gamma)} |y_1 - y_2|^{\gamma}(k\log n)^{d/Q + K\gamma/Q}\Big\}
\end{align*}
where $\bigcup_{y_1, y_2}$ denotes the union over all linked pairs $y_1, y_2 \in F(n,k,x)$, the constant $c$ is given 
by Lemma \ref{Lem2} with $b = 2$ and a small $\gamma \in (0, 1)$ is chosen so that \eqref{Lem2:Eq2} holds. Note 
that there are at most $2^{kd} 3^d$ linked pairs in $F(n,k,x)$.
It follows that for $n$ large,
\begin{align}
\begin{aligned}\label{P(F_n)}
\P(F_n) &\le C (\log n)^{(K+1)d} \sum_{k = 1}^\infty 2^{kd} \exp(-2k \log n)\\
& = C (\log n)^{(K+1)d} \frac{2^d n^{-2}}{1 - 2^d n^{-2}}.
\end{aligned}
\end{align}
Since $\sum_{n=1}^\infty \P(F_n) < \infty$, it follows from the Borel--Cantelli lemma that a.s.~$F_n$ occurs only finitely many times.

(4). For $y \in \R^d$ with $|y| \le u_n$, $n \ge 1$, we can represent $y$ in the form $y = \lim_{k \to \infty} y_k$ with
\begin{equation}\label{rep:y_k}
y_k = x + \theta_n \sum_{j=1}^k \eps_j 2^{-j},
\end{equation}
where $y_0 = x \in G_n$ and $\eps_j \in \{0, 1\}^d$ for $j = 1, \dots, k$. Since the local time $L$ is jointly continuous, by 
\eqref{rep:y_k} and the triangle inequality, we see that on the event $F_n^c$,
\begin{align}
\begin{aligned}\label{LT:tri_ineq}
|L(y + X(a), B_n) - L(x + X(a), B_n)|
& \le \sum_{k=1}^\infty |L(y_k + X(a), B_n) - L(y_{k-1}+ X(a), B_n)|\\
& \le \sum_{k=1}^\infty c\,2^{-n(Q-d-\gamma)} |y_k - y_{k-1}|^{\gamma} (k \log n)^{d/Q + K\gamma/Q}\\
&\le C \varphi(2^{-n}).
\end{aligned}
\end{align}
Combining \eqref{max_LT} and \eqref{LT:tri_ineq}, we get that a.s.~for all $n$ large, 
\begin{equation}\label{sup_LT}
\sup_{|y| \le u_n} L(y+ X(a), B_n) \le C \varphi(2^{-n}).
\end{equation}
Since $L^*(B_n) = \sup\{ L(x, B_n) : x \in \overline{X(B_n)} \}$, we can deduce \eqref{Holder1} from \eqref{sup_LT} 
and \eqref{ineq:X}. This proves \eqref{Eq:Holder1}.

The proof of \eqref{Eq:Holder2} is similar. It suffices to prove that
\begin{equation}\label{Holder2}
\limsup_{n \to \infty} \sup_{B \in \mathscr{B}_n}\frac{L^*(B)}{\varphi(2^{-n})} \le C,
\end{equation}
where, for each $n \ge 1$, $\mathscr{B}_n$ is a covering of $T$ consisting of disjoint anisotropic cubes of side 
lengths $2^{-n/H_1}, \dots, 2^{-n/H_N}$. Note that $\#(\mathscr{B}_n) \le C 2^{nQ}$.
Define
\[ G_n = \left\{ x \in \R^d : |x| \le  n \textup{ with } x = \theta_n p \textup{ for some } p \in \Z^d \right\}. \]
Then we can use Lemma \ref{LT:large_dev} to find a constant $c$ such that a.s.~for all $n$ large, 
\begin{equation}\label{maxmaxLT}
\max_{B \in \mathscr{B}_n} \max_{x \in G_n} L(x, B) \le c\, \Phi(2^{-n}).
\end{equation}
Define $F(n,k,x)$ as before and 
\begin{align*}
F_n = \bigcup_{B \in \mathscr{B}_n} \bigcup_{x \in G_n} \bigcup_{k \ge 1} \bigcup_{y_1, y_2} \Big\{ |L(y_1, B) - L(y_2, B)|
\ge c\, 2^{-n(Q-d-\gamma)} |y_1 - y_2|^{\gamma} (k\log 2^n)^{d/Q + K\gamma/Q} \Big\}.
\end{align*}
As in \eqref{P(F_n)}, we can use \eqref{Lem:Eq2} to show that a.s.~$F_n$ occurs only finitely many times. 
Since $X(t)$ is continuous, there exists $n_0 = n_0(\omega)$ such that
$\sup_{t \in T} |X(t)| \le n_0$ a.s.
If $|y| \le n$, then by the chaining argument as in \eqref{rep:y_k} and \eqref{LT:tri_ineq}, we can prove that on $F_n^c$,
\[ |L(y, B) - L(x, B)| \le C \Phi(2^{-n}) \]
for some $x \in G_n$.
This and \eqref{maxmaxLT} imply that a.s.~for all $n$ large,
\begin{equation}\label{supsupLT}
\sup_{B \in \mathscr{B}_n} \sup_{|y| \le n} L(y, B) \le C \Phi(2^{-n}) \quad \textup{a.s.}
\end{equation}
Since $L(y, T) = 0$ for $|y| > n_0$, \eqref{Holder2} follows from \eqref{supsupLT}. This completes the proof.
\end{proof}

As pointed out by Berman \cite{B73} (see also Ehm \cite{E81}), the H\"older conditions of the local times are 
closely related to the degree of oscillations of the sample paths of $X(t)$. As a consequence of Theorem \ref{Th:Holder} 
and the inequality (\ref{LT_ineq}) below, we obtain lower bounds for Chung's law of iterated logarithm and the 
modulus of non-differentiability for $X(t)$. 
%As noted in Remark \ref{Re:optimal}, corresponding upper bounds can be established under Assumption 2.1 
%in \cite{DMX17} and Condition (A2) in the present paper, see \cite{X21}.

\begin{theorem}\label{T:sample}
Suppose $X$ satisfies condition \textup{(A)} on $T$. % and let $Q = \sum_{j=1}^N (1/H_j)$. 
Then there exist positive constants $C$ and $C'$ such that for any $t \in T$, 
\begin{equation}\label{Eq:Chung_LIL}
\liminf_{r \to 0} \sup_{s \in B_\rho(t, r)} \frac{|X(s) - X(t)|}{r (\log\log (1/r))^{-1/Q}} \ge C \quad \text{a.s.}
\end{equation}
and
\begin{equation}\label{Eq:mod_nondiff}
\liminf_{r \to 0} \inf_{t \in T} \sup_{s \in B_\rho(t, r)} \frac{|X(s) - X(t)|}{r (\log(1/r))^{-1/Q}} \ge C' \quad \text{a.s.}
\end{equation}
In particular, the sample paths of $X$ are a.s.~nowhere differentiable in $T$.
\end{theorem}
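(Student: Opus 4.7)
The plan is to derive both oscillation estimates from the H\"older upper bounds on $L^*$ established in Theorem \ref{Th:Holder}, linked to $X$ by the standard occupation-density inequality that the authors label \eqref{LT_ineq}. By \eqref{Eq:ODF} with $f = \mathbf{1}_{X(B_\rho(t,r))}$, and since $X$ is continuous so that $X(B_\rho(t,r))$ is contained in the Euclidean ball of radius $\sup_{s \in B_\rho(t,r)} |X(s) - X(t)|$ around $X(t)$,
\begin{equation*}
\lambda_N(B_\rho(t,r)) \;=\; \int_{X(B_\rho(t,r))} L(x, B_\rho(t,r))\,dx \;\le\; \kappa_d\, L^*(B_\rho(t,r)) \Bigl(\sup_{s \in B_\rho(t,r)} |X(s) - X(t)|\Bigr)^{\!d},
\end{equation*}
where $\kappa_d$ is the volume of the unit Euclidean ball in $\R^d$. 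Using \eqref{Eq:equiv_metric}, the ball $B_{\tilde\rho}(t, r/N) \subset B_\rho(t,r)$ is a closed interval of side lengths $2(r/N)^{1/H_j}$, so $\lambda_N(B_\rho(t,r)) \ge c_0\, r^Q$ for a positive constant $c_0$ depending only on $N, H$.

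To prove \eqref{Eq:Chung_LIL}, I fix $t \in T$ and apply \eqref{Eq:Holder1}: almost surely $L^*(B_\rho(t,r)) \le C\varphi(r) = C r^{Q-d}(\log\log(1/r))^{d/Q}$ for all sufficiently small $r > 0$. Plugging both bounds into the displayed inequality gives
\begin{equation*}
\Bigl(\sup_{s \in B_\rho(t,r)} |X(s) - X(t)|\Bigr)^{\!d} \;\ge\; \frac{c_0\, r^Q}{\kappa_d\, C\, r^{Q-d}(\log\log(1/r))^{d/Q}} \;=\; C_1\, r^d (\log\log(1/r))^{-d/Q},
\end{equation*}
and extracting the $d$-th root and taking $\liminf_{r \to 0}$ yields \eqref{Eq:Chung_LIL}. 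The global estimate \eqref{Eq:mod_nondiff} follows by the same computation with \eqref{Eq:Holder2} replacing \eqref{Eq:Holder1}, which swaps $\varphi(r)$ for $\Phi(r) = r^{Q-d}(\log(1/r))^{d/Q}$ and hence yields $(\log(1/r))^{-1/Q}$ in place of $(\log\log(1/r))^{-1/Q}$, uniformly over $t \in T$.

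For the nowhere differentiability claim, suppose for contradiction that $X$ is differentiable at some $t \in T$. Then there exists $C_2 < \infty$ such that $|X(s) - X(t)| \le C_2 |s-t|$ for $s$ near $t$. For $s \in B_\rho(t,r)$ with $r$ small, $|s_j - t_j| \le r^{1/H_j}$ for each $j$, hence $|s - t| \le \sqrt{N}\, r^{1/H_{\max}}$ where $H_{\max} = \max_{1 \le j \le N} H_j < 1$. Combined with \eqref{Eq:Chung_LIL}, this forces $C\, r(\log\log(1/r))^{-1/Q} \le C_2\sqrt{N}\,r^{1/H_{\max}}$ for arbitrarily small $r$; but since $1/H_{\max} > 1$, the ratio $r^{1/H_{\max}-1}(\log\log(1/r))^{1/Q} \to 0$, a contradiction.

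The only real obstacle is identifying the correct form of the inequality \eqref{LT_ineq} and verifying the volume bound $\lambda_N(B_\rho(t,r)) \asymp r^Q$; the remainder is direct substitution. One should also check that $B_\rho(t,r) \subset T$ for all sufficiently small $r$, which holds for $t$ in the interior of $T$ and can be arranged in general by shrinking $T$ slightly, since both statements are qualitative and local.
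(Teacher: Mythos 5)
Your approach is the same as the paper's: combine the occupation density formula (the inequality the paper labels \eqref{LT_ineq}) with the upper bounds \eqref{Eq:Holder1} and \eqref{Eq:Holder2} on $L^*$, plus the volume estimate $\lambda_N(B_\rho(t,r))\asymp r^Q$. Working in dimension $d$ and extracting a $d$-th root, as you do, is a harmless variant of the paper's one-dimensional interval bound. However, there are two genuine gaps.

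First, Theorem \ref{T:sample} does \emph{not} assume $d<Q$ (compare its hypothesis with that of Theorem \ref{Th:Holder}), whereas your argument invokes the local time of the full $\R^d$-valued field $X$ and Theorem \ref{Th:Holder}, both of which require $d<Q$. If $d\ge Q$ the local time of $X$ does not exist and your inequality is vacuous. The paper's first line, ``It is enough to consider the case $d=1$,'' is exactly the device that removes this restriction: since $|X(s)-X(t)|\ge |X_1(s)-X_1(t)|$, it suffices to prove the lower bounds for the scalar field $X_1$, and $1<Q$ holds automatically under condition (A) because $Q=\sum_j 1/H_j>N\ge 1$. Without this reduction your proof only covers $d<Q$.

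Second, your nowhere-differentiability argument applies \eqref{Eq:Chung_LIL} at the (random) point $t$ where differentiability is assumed to fail. But \eqref{Eq:Chung_LIL} is an almost-sure statement for each \emph{fixed} $t$; the exceptional null set depends on $t$, so it cannot be evaluated at a sample-path-dependent point. The correct source for nowhere differentiability is the uniform statement \eqref{Eq:mod_nondiff}, whose $\inf_{t\in T}$ gives a single null set outside of which the oscillation lower bound $C'r(\log(1/r))^{-1/Q}$ holds simultaneously for all $t\in T$; your comparison with $r^{1/H_{\max}}$ then goes through verbatim (the extra $\log$ is still negligible against the power gap $1/H_{\max}-1>0$). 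Both repairs are short, but as written the proof does not establish the theorem in the stated generality.
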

%{\blue These H\"older conditions are optimal by Chung's LIL and the exact modulus of non-differentiability.}

\begin{proof}
It is enough to consider the case $d = 1$.
Let $I$ denote the smallest closed interval containing the range of $X$ on $B_\rho(t, r)$. It follows from the 
occupation density formula (\ref{Eq:ODF}) that
\begin{align}
\begin{aligned}\label{LT_ineq}
\lambda_N(B_\rho(t, r)) = \int_I L(x, B_\rho(t, r)) \, dx
\le L^*(B_\rho(t, r)) \times \sup_{s, s' \in B_\rho(t, r)} |X(s) - X(s')|.
\end{aligned}
\end{align}
Since $\lambda_N(B_\rho(t, r)) = C r^Q$, \eqref{Eq:Chung_LIL} follows from \eqref{LT_ineq} and \eqref{Eq:Holder1}. 
Similarly, \eqref{Eq:mod_nondiff} follows from \eqref{LT_ineq} and \eqref{Eq:Holder2}.
\end{proof}

We end this section with the following remark on the optimality of the inequalities in \eqref{LT_ineq}, \eqref{Eq:Holder2},
 \eqref{Eq:Chung_LIL}, and \eqref{Eq:mod_nondiff}.
 
\begin{remark}\label{Re:optimal}
As pointed out in Ehm \cite{E81}, if the left-hand side of \eqref{Eq:Chung_LIL} [or \eqref{Eq:mod_nondiff}, resp.]~is 
also bounded above by a finite constant a.s., then \eqref{LT_ineq} implies that \eqref{Eq:Holder1} [or 
\eqref{Eq:Holder2}, resp.]~is also bounded below by a positive constant a.s.~and hence the H\"older conditions for 
the local time will be optimal. Indeed, if $X$, in addition to satisfying Condition (A), has stationary increments or satisfies 
Assumption 2.1 in \cite{DMX17}, then \eqref{Eq:Chung_LIL} is a.s.~equal to some positive finite constant, see 
 \cite{LX10} and \cite{LeeX21}, respectively. On the other hand, Wang, Su and Xiao \cite{WangSX20} determined 
the exact modulus of non-differentiability for a class of Gaussian random fields with stationary and isotropic increments.
Hence \eqref{Eq:Holder2} is also optimal for these Gaussian random fields.
%\begin{remark}\label{Re:optimal}
%Under Assumption 2.1 in \cite{DMX17} and Condition (A2) in the present paper,  Lee and Xiao
%\cite{LeeX21} proved Chung's LIL 
%and determine the exact modulus of non-differentiability for $X$, 
%see \cite{LeeX21, WangX21}. These results, 
%This result, together with the following inequality:  For any $ t \in T$,
%\begin{equation}\label{Eq:occupationS}
%\begin{split} 
% \lambda_N\big(B_\rho(t, r) \big) &= \int_{\overline{X\big(B_\rho(t, r)\big)}} L(x, B_\rho(t, r))\ dx \\
%&\le L^*(B_\rho(t, r)) \cdot  \Big(\sup_{s, t \in Q}  | X(s) - X(t)|\Big)^d, 
%\end{split}
%\end{equation}
%imply that the inequality (\ref{Eq:Holder1}) %and (\ref{Eq:Holder2}) 
%is optimal. Namely, the limits in 
%(\ref{Eq:Holder1}) and (\ref{Eq:Holder2}) are bounded from below by positive constants.
\end{remark}

\section{The exact Hausdorff measure of level sets}
\label{sect:level_set}

Let us consider the class $\mathscr{C}$ of functions $\varphi: [0, \delta_0] \to \R_+$ such that $\varphi$ is 
nondecreasing, continuous, $\varphi(0) = 0$, and satisfies the doubling condition, i.e. there exists a finite constant 
$c_0 > 0$ such that
\begin{equation}\label{Eq:gauge_function_doubling_cond}
\frac{\varphi(2s)}{\varphi(s)} \le c_0
\end{equation}
for all $s \in (0, \delta_0/2)$.

Let $\varphi \in \mathscr{C}$ and let $\rho$ be a metric on $\R^N$. For any Borel set $A$ in $\R^N$, the 
\emph{Hausdorff measure} of $A$ with respect to the function $\varphi$, in metric $\rho$ is defined by
\begin{align*}
\mathcal{H}^\varphi_\rho(A) = \lim_{\eps \to 0} \inf\Bigg\{ \sum_{n=1}^\infty \varphi(2r_n) : A \subseteq 
\bigcup_{n=1}^\infty B_\rho(t^n, r_n) \text{ where } t^n \in \R^N \text{ and } r_n \le \eps \text{ for all } n \Bigg\}.
\end{align*}
We use the notation $\mathcal{H}^\varphi(A)$ if $\rho$ is the Euclidean metric.

When $\varphi(s) = s^\alpha$, where $\alpha > 0$ is a real number, $\mathcal{H}^\alpha_\rho(A) = \mathcal{H}^\varphi_\rho(A)$ 
is called the $\alpha$-dimensional Hausdorff measure of $A$ in metric $\rho$, and the Hausdorff dimension of $A$ in $\rho$ is 
defined as
\[ \dim_H^\rho(A) = \inf\{ \alpha > 0 : \mathcal{H}^\alpha_\rho(A) = 0 \}. \]
Hausdorff dimension in metric $\rho$ is useful in studying the fractal properties of anisotropic Gaussian random fields; see 
Wu and Xiao \cite{WX07} and Xiao \cite{X09}.

Suppose $X$ satisfies condition (A) on a compact interval $T \subset \R^N$. 
%Let $Q = \sum_{j=1}^N (1/H_j)$ and 
%\[\rho(t, s) = \sum_{j=1}^N |t_j - s_j|^{H_j}.\]
By Theorem 7.1 of Xiao \cite{X09}, for any $x \in \R^d$, if $Q < d$, then $X^{-1}(x) \cap T = \varnothing$ a.s.; if $d < Q$, 
then with positive probability, the Hausdorff dimension of $X^{-1}(x) \cap T$ in the Euclidean metric is 
(assuming that $0< H_1 \le H_2 \le \dots \le H_N < 1$)
\begin{align}
\begin{aligned}\label{Eq:Haus_meas_level_set_Euclidean}
\dim_H(X^{-1}(x) \cap T) &= \min_{1 \le k \le N}\bigg\{ \sum_{j=1}^k \frac{H_k}{H_j} + N - k - H_k d \bigg\}\\
& = \sum_{j=1}^\tau \frac{H_\tau}{H_j} + N - \tau - H_\tau d,
\end{aligned}
\end{align}
where $\tau$ is the unique integer between $1$ and $N$ such that $\sum_{j=1}^{\tau-1} (1/H_j) \le d < \sum_{j=1}^\tau (1/H_j)$.
More generally, Bierm\'e, Lacaux and Xiao \cite{BLX09} determined the Hausdorff dimension of the inverse image $X^{-1}(F)$ for 
Borel sets $F$ in $\R^N$. 

The Hausdorff dimension of the level set may be different when the underlying metric is not the Euclidean metric. 
Theorem 4.2 of Wu and Xiao \cite{WX11} shows that if $d < Q$, then almost surely, the Hausdorff dimension of 
$X^{-1}(x) \cap T$ in the metric $\rho$ defined by \eqref{D:rho} is 
\[ \dim_H^\rho(X^{-1}(x) \cap T) = Q - d \]
for all $x \in \R^d$ such that $L(x, T) > 0$.

For the special case where $H_1 = \dots = H_N = H$, we have $\dim_H(X^{-1}(x) \cap T) = N - Hd$ and 
$\dim_H^\rho(X^{-1}(x) \cap T) = \frac{N}{H}  - d$. In this case, since $\rho(t, s) \asymp |t - s|^H$, it is easy to 
see that for any function $\varphi \in \mathscr{C}$, there exist positive finite constants $C_1$ and $C_2$ such that 
\begin{equation}\label{compare_Haus_meas}
C_1\mathcal{H}^\psi(A) \le \mathcal{H}^\varphi_\rho(A) \le C_2 \mathcal{H}^\psi(A)
\end{equation}
for all $A \in \mathscr{B}(\R^N)$, where $\psi$ is defined by $\psi(r) = \varphi(r^H)$.

The Hausdorff measure with respect to a suitable function provides a way to measure the size of the level sets, 
especially when the level sets have trivial Lebesgue measure or $\alpha$-dimensional Hausdorff measure. 
It would be interesting to determine the exact Hausdorff measure function (or gauge function) of the level set, 
that is, to find a function $\varphi$ such that 
\[ 0 < \mathcal{H}^\varphi_\rho(X^{-1}(x) \cap T) < \infty \]
almost surely or with positive probability.

Recall that the $\rho$-upper $\varphi$-density of a finite Borel measure $\mu$ on $\R^N$ at the point $t \in \R^N$ is defined by
\[ \overline{D}^{\varphi, \rho}_\mu(t) := \limsup_{r \to 0} \frac{\mu(B_\rho(t, r))}{\varphi(r)}. \]
There exists a positive constant $c \ge 1$ depending only on $c_0$ in \eqref{Eq:gauge_function_doubling_cond} such that 
\begin{equation}\label{Eq:upper_density}
c^{-1} \mathcal{H}^\varphi_\rho(E) \inf_{t \in E}\overline{D}^{\varphi, \rho}_\mu(t)
\le \mu(E) \le c\, \mathcal{H}^\varphi_\rho(E) \sup_{t \in E}\overline{D}^{\varphi, \rho}_\mu(t)
\end{equation}
for any finite Borel measure $\mu$ on $\R^N$ and any Borel set $E$ in $\R^N$ (see Theorem 4.1 of \cite{WX11}).

The following is a partial result giving a lower bound for the Hausdorff measure. As shown by Theorem 5.3 below, it is
possible to show $\mathcal{H}^\varphi_\rho(X^{-1}(x) \cap T) < \infty$ under certain extra conditions.   

\begin{theorem}\label{Thm:Haus_meas_level_set}
Suppose $X$ satisfies condition \textup{(A)} on $T$ and $d < Q$, where $Q = \sum_{j=1}^N (1/H_j)$.
Then there is a constant $C > 0$ such that for any $x \in \R^d$, 
\[ C L(x, T) \le \mathcal{H}^\varphi_\rho(X^{-1}(x) \cap T) \quad \text{a.s.,} \quad \text{where }\quad \varphi(r) = r^{Q-d} (\log\log(1/r))^{d/Q}. \]
In particular, if $H_1 = \dots = H_N = H$, then
\[ C L(x, T) \le \mathcal{H}^\psi(X^{-1}(x) \cap T) \quad \text{a.s.,} \quad \text{where }\quad \psi(r) = r^{N-Hd} (\log\log(1/r))^{Hd/N}. \]
\end{theorem}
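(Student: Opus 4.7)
The plan is to deduce the inequality from the upper-density principle \eqref{Eq:upper_density}, applied to the measure $\mu = L(x, \cdot)$ and the set $E = X^{-1}(x) \cap T$. Under condition \textup{(A)} and $d < Q$, Theorem \ref{Th:joint_cont_LT} supplies a jointly continuous local time, and the recalled fact from \cite{GH80, Adler} then gives that $L(x, \cdot)$ is a Borel measure supported on $X^{-1}(x) \cap T$; in particular $L(x, T) = L(x, X^{-1}(x) \cap T)$. A direct check shows that the gauge $\varphi(r) = r^{Q-d}(\log\log(1/r))^{d/Q}$ belongs to the class $\mathscr{C}$: it is nondecreasing and continuous near $0$, vanishes at $0$, and satisfies the doubling condition \eqref{Eq:gauge_function_doubling_cond} since $\log\log(1/(2r)) \sim \log\log(1/r)$ as $r \to 0$.

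For fixed $x \in \R^d$, let $C_0$ be the constant from Theorem \ref{Thm:Holder_LT} and set
\[ E^* = \Bigl\{ t \in X^{-1}(x) \cap T : \overline{D}^{\varphi, \rho}_{L(x, \cdot)}(t) \le C_0 \Bigr\}. \]
By Theorem \ref{Thm:Holder_LT}, almost surely $L(x, \cdot)$ assigns zero mass to $(X^{-1}(x) \cap T)\setminus E^*$, so $L(x, T) = L(x, E^*)$. The right-hand inequality in \eqref{Eq:upper_density}, applied with $\mu = L(x, \cdot)$ and $E = E^*$, then yields
\[ L(x, T) = L(x, E^*) \le c\, \mathcal{H}^\varphi_\rho(E^*) \sup_{t \in E^*} \overline{D}^{\varphi, \rho}_{L(x, \cdot)}(t) \le c\, C_0\, \mathcal{H}^\varphi_\rho\bigl( X^{-1}(x) \cap T \bigr), \]
which is the claimed inequality with $C = (c C_0)^{-1}$.

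The conceptual point on which the argument turns — and the reason the refined Theorem \ref{Thm:Holder_LT} is needed rather than the pointwise version \eqref{LIL:LT} — is that the supremum in \eqref{Eq:upper_density} must be controlled on a subset of $E$ that still carries all the mass of $\mu$, which is exactly what Theorem \ref{Thm:Holder_LT} supplies. The remaining verifications are routine. Finally, in the isotropic case $H_1 = \cdots = H_N = H$, one has $Q = N/H$, $d/Q = Hd/N$, and $\varphi(r^H) = r^{N-Hd}\bigl(\log H + \log\log(1/r)\bigr)^{Hd/N}$, which is comparable to $\psi(r)$ near $0$, so the special statement follows from the general one via the comparison \eqref{compare_Haus_meas}.
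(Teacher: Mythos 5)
Your proof is correct and follows essentially the same route as the paper: combine Theorem \ref{Thm:Holder_LT} with the upper bound in \eqref{Eq:upper_density} applied to $\mu = L(x,\cdot)$ and $E = X^{-1}(x)\cap T$, then invoke \eqref{compare_Haus_meas} for the isotropic case. Your explicit restriction to the full-$\mu$-measure subset $E^*$ correctly handles the ``$L(x,\cdot)$-almost every $t$'' caveat in Theorem \ref{Thm:Holder_LT}, a point the paper's one-line application of the density theorem glosses over.
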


\begin{proof}
Take $\mu = L(x, \cdot \cap T)$, which is a.s.~a finite Borel measure on $\R^N$ whose support is $X^{-1}(x) \cap T$. 
By Theorem \ref{Thm:Holder_LT}, there exists a finite constant $C$ such that 
\[ \sup_{t \in E}\overline{D}^{\varphi, \rho}_\mu(t) \le C \quad \text{a.s.} \]
Then we can use the upper bound of \eqref{Eq:upper_density} with $E = X^{-1}(x) \cap T$ to obtain the result. 
The special case where $H_1 = \dots = H_N = H$ follows from \eqref{compare_Haus_meas}
\end{proof}

\smallskip

In view of Theorem \ref{Thm:Haus_meas_level_set}, a natural question is whether  $X^{-1}(x) =\varnothing $ a.s. 
when $d \ge Q$.  As shown by Dalang et al.~\cite[Theorem 2.6]{DMX17}, this is indeed the case if, in addition to satisfying 
conditions of Theorem \ref{Thm:Haus_meas_level_set}, $X$ also satisfies Assumptions 2.1 and 2.4 in \cite{DMX17}.

\section{Systems of stochastic heat equations}
\label{sect:SHE}

As an example, we consider the following system of stochastic heat equations:
\begin{align}\label{Eq:SHE}
\begin{cases}
\displaystyle\frac{\partial}{\partial t} u_j(t, x) = \Delta u_j(t, x) + \dot{W}_j(t, x), & t \ge 0, x \in \R^N,\\
u_j(0, x) = 0 & j = 1, \dots, d,
\end{cases}
\end{align}
where $\dot{W} = (\dot{W}_1, \dots, \dot{W}_d)$ is a $d$-dimensional Gaussian noise. 
In this section, we will apply our main results to study the local times and level sets of the solution $u$ of \eqref{Eq:SHE}. 
At the end of this section, we will also discuss a more general system \eqref{Eq:CSHE} where each component 
of the solution may depend on all the $\dot{W}_j$'s ($j = 1, \dots, d$).

We assume that $\dot{W}_1, \dots, \dot{W}_d$ are i.i.d.~and $\dot{W}_j(t, x)$ is either 
(i) white in time and colored in space with covariance
\[ \E[\dot{W}_j(t, x) \dot{W}_j(s, y)] = \delta_0(t-s) |x-y|^{-\beta} \]
for $N \ge 1$ and $0 < \beta < 2 \wedge N$, or (ii) the space-time white noise for $N = 1$ (take $\beta=1$ in this case).

The solution of \eqref{Eq:SHE} is the Gaussian random field $u = \{u(t, x) : t \ge 0, x \in \R^N\}$ with i.i.d.~components 
$u_1, \dots, u_d$, given by
\[ u_j(t, x) = \int_0^t \int_{\R^N} G(t-s, x-y) W_j(ds\, dy), \]
where $G$ is the fundamental solution of the heat equation:
\[ G(t, x) = \frac{1}{(4\pi t)^{N/2}} \exp\Big(-\frac{|x|^2}{4t}\Big) {\bf 1}_{\{t > 0\}}. \]

Recall that for any $0 < a < b < \infty$, there exist positive finite constants $C_1, C_2$ such that
\begin{equation}\label{Eq:SHE_incre}
C_1 \rho((t, x), (s, y)) \le (\E|u(t, x) - u(s, y)|^2)^{1/2} \le C_2 \rho((t, x), (s, y))
\end{equation}
for all $(t, x), (s, y) \in [a, b] \times [-b,b]^N$, where
\begin{align}\label{rho}
\rho((t, x), (s, y)) = |t-s|^{\frac{2-\beta}{4}} + |x-y|^{\frac{2-\beta}{2}}.
\end{align}
See \cite{DKN07}, Lemma 4.2.
Hence $u$ satisfies (A1) on any compact interval in $(0, \infty) \times \R^N$ and, in this case, the quantity $Q$ becomes
$Q = \frac{2(2+N)}{2 -\beta}$.

We are going to prove that $u$ also satisfies condition (A2), the strong local nondeterminism in variables $(t, x)$ jointly 
with respect to the metric $\rho$. In fact, using the idea of string process in Mueller and Tribe \cite{MT02}, it is shown 
in Herrell et al.~\cite{HSWX} that $u$ admits the decomposition
\begin{equation} \label{Eq:String}
u(t, x) = U(t, x) - Y(t, x),
\end{equation}
where $U(t, x)$ has stationary increments and satisfies strong LND in metric $\rho$, whereas $Y(t, x)$ is a.s.
~continuously differentiable in $(t, x) \in [0, \infty)\times \R^N$. We mention that, by applying the decomposition 
(\ref{Eq:String}) and the stationarity of the increments of $U(t, x)$, Herrell et al.~\cite{HSWX} proved the regularity 
properties such as the exact uniform and local moduli of continuity and Chung's law of the iterated logarithm for 
$u(t, x)$. Lee and Xiao \cite{LeeX21} showed that the regularity properties such as those studied in \cite{HSWX}
can be established under the more general framework of Dalang et al \cite{DMX17} for Gaussian random fields whose 
increments may not be stationary.

In Proposition \ref{Prop:SHE_SLND} below, we will prove directly that $u$ itself satisfies the strong 
LND property in (A2). 

Let $n \ge 1$, $(t^1, x^1), \dots, (t^n, x^n) \in \R_+ \times \R^N$ and $a_1, \dots, a_n \in \R$. Let
\[g(s, y) = \sum_{j=1}^n a_j G(t^j - s, x^j - y) {\bf 1}_{[0, t^j]}.\]
Then by Plancherel's theorem, we have
\begin{align}
\begin{aligned}\label{Eq:SHE_var}
\E\Bigg[ \bigg( \sum_{j=1}^n a_j u_1(t^j, x^j) \bigg)^2 \Bigg]
= C \int_{\R} d\tau \int_{\R^N} d\xi \, |\mathscr{F} g(\tau, \xi)|^2\, |\xi|^{\beta - N}.
\end{aligned}
\end{align}
In the above, $\mathscr{F}g$ denotes the Fourier transform of $g$, that is,
\[ \mathscr{F} g(\tau, \xi) = \int_{\R}\int_{\R^N} e^{-i\tau s - i \langle \xi, y \rangle} g(t, x) \,ds\, dy. \]
One can directly verify that
\begin{equation}\label{FT_G}
\mathscr{F}(G(t - \cdot, x- \cdot) {\bf 1}_{[0, t]})(\tau, \xi) 
= e^{-i\langle \xi, x\rangle} \frac{e^{-i\tau t} - e^{-t|\xi|^2}}{|\xi|^2 - i\tau}.
\end{equation}

\begin{proposition}\label{Prop:SHE_SLND}
For any $0 < a < b < \infty$, there exists a constant $C > 0$ such that
for all integers $n \ge 1$ and all $(t, x), (t^1, x^1), \dots, (t^n, x^n) \in [a, b] \times [-b, b]^N$, 
\begin{equation}\label{Eq:SHE_SLND}
\mathrm{Var}\left(u_1(t, x) | u_1(t^1, x^1), \dots, u_1(t^n, x^n)\right) \ge C \min_{1 \le i \le n}\rho((t, x), (t^i, x^i))^2,
\end{equation}
where $\rho$ is the metric in \eqref{rho}.
\end{proposition}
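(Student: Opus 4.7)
The plan is to deduce \eqref{Eq:SHE_SLND} from the spectral representation of $u_1$ by Parseval duality against a parabolically rescaled bump function. By linear prediction, the conditional variance equals
\[
V := \inf_{a_1, \ldots, a_n \in \R} \E\bigg[\bigg(u_1(t,x) - \sum_{i=1}^n a_i u_1(t^i, x^i)\bigg)^2\bigg],
\]
and combining \eqref{Eq:SHE_var} with \eqref{FT_G} yields the closed form
\[
V = C \inf_a \int_{\R \times \R^N} \frac{|N_a(\tau,\xi)|^2}{|\xi|^4 + \tau^2}\,|\xi|^{\beta - N}\,d\tau\,d\xi,
\]
where $N_a(\tau,\xi) := (e^{-i\tau t} - e^{-t|\xi|^2}) e^{-i\langle \xi, x\rangle} - \sum_i a_i (e^{-i\tau t^i} - e^{-t^i|\xi|^2}) e^{-i\langle \xi, x^i\rangle}$. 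Set $r := \min_i \rho((t,x),(t^i,x^i))$; the range $r \ge 4 a^{(2-\beta)/4}$ is reduced to small $r$ by running the argument at the fixed scale $r_* = 2 a^{(2-\beta)/4}$ and invoking the bounded $\rho$-diameter of $[a,b] \times [-b,b]^N$, so we may assume $r$ is small. Fix a non-negative $\phi \in C_c^\infty(\R \times \R^N)$ with $\phi(0,0) = 1$ and $\operatorname{supp}\phi \subset \{(s,y): |s|^{(2-\beta)/4}+|y|^{(2-\beta)/2} \le 1/4\}$, and define
\[
\phi_r(s,y) := \phi\bigg(\frac{s-t}{r^{4/(2-\beta)}},\, \frac{y-x}{r^{2/(2-\beta)}}\bigg).
\]
Then $\phi_r$ is supported in $B_\rho((t,x), r/4)$, $\phi_r(t,x)=1$, $\phi_r(t^i,x^i)=0$ for every $i$ by the definition of $r$, and $\phi_r(0,y) \equiv 0$ provided $r < 4 a^{(2-\beta)/4}$, since then the time-support of $\phi_r$ lies strictly in $\{s > 0\}$.

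The crux is the identity, valid uniformly in $a_1, \ldots, a_n$,
\[
\int_{\R \times \R^N} N_a(\tau, \xi)\, \overline{\mathscr F \phi_r(\tau, \xi)}\, d\tau\, d\xi = (2\pi)^{N+1}.
\]
Indeed, the oscillatory exponentials $e^{-i\tau t - i\langle \xi, x\rangle}$ and $e^{-i\tau t^i - i\langle \xi, x^i\rangle}$ are Fourier transforms of $\delta_{(t,x)}$ and $\delta_{(t^i,x^i)}$, so by Plancherel they contribute $(2\pi)^{N+1} \phi_r(t,x) = (2\pi)^{N+1}$ and $(2\pi)^{N+1}\phi_r(t^i,x^i) = 0$, respectively; while the heat-kernel pieces $e^{-t|\xi|^2 - i\langle \xi, x\rangle}$ and $e^{-t^i|\xi|^2 - i\langle \xi, x^i\rangle}$ are independent of $\tau$, so integrating in $\tau$ first produces the factor $\int_\R \overline{\mathscr F \phi_r(\tau, \xi)}\, d\tau = 2\pi \int_{\R^N} \phi_r(0, y)\, e^{i\langle \xi, y\rangle}\, dy = 0$. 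Applying Cauchy--Schwarz with weight $|\xi|^{\beta-N}/(|\xi|^4+\tau^2)$ on the first factor and its reciprocal on the second yields
\[
(2\pi)^{2(N+1)} \le \bigg(\int \frac{|N_a|^2\,|\xi|^{\beta-N}}{|\xi|^4+\tau^2}\,d\tau\,d\xi\bigg) \bigg(\int |\mathscr F \phi_r|^2\,(|\xi|^4+\tau^2)\,|\xi|^{N-\beta}\,d\tau\,d\xi\bigg),
\]
and the parabolic change of variables $\tilde\tau = \tau r^{4/(2-\beta)}$, $\tilde\xi = \xi r^{2/(2-\beta)}$ collapses the second integral into $C_\phi r^{-2}$, where $C_\phi := \int |\hat\phi(\tilde\tau, \tilde\xi)|^2(|\tilde\xi|^4 + \tilde\tau^2)|\tilde\xi|^{N - \beta}\,d\tilde\tau\,d\tilde\xi$ is finite because $\hat\phi$ is Schwartz and $\beta < 2 \wedge N$ ensures integrability near $\tilde\xi = 0$. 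Rearranging gives $V \ge C r^2$, which is \eqref{Eq:SHE_SLND}.

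The main obstacle is the ``spurious'' heat-kernel term $e^{-t|\xi|^2}$ in \eqref{FT_G}, which arises from the initial condition $u(0,\cdot)=0$ and obstructs a clean stationary-type spectral decomposition of $u_1$. Rather than estimating it as an error (which seems delicate, since one would have to control cancellations between heat-kernel pieces weighted by arbitrary $a_i$), the trick is to design $\phi_r$ so that this term contributes \emph{exactly} zero to the Parseval pairing: the absence of $\tau$-dependence forces the pairing to reduce to the $y$-Fourier transform of $\phi_r(0,\cdot)$, which vanishes precisely when $\phi_r$ is supported in $\{s > 0\}$. This is the sole place where the hypothesis $t \ge a > 0$ enters.
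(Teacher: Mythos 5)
Your proposal is correct and follows essentially the same route as the paper's proof: both express the conditional variance spectrally via \eqref{Eq:SHE_var}--\eqref{FT_G}, pair the numerator against the Fourier transform of a parabolically rescaled bump centered at $(t,x)$ whose support avoids the conditioning points and the hyperplane $\{s=0\}$ (so the delta terms contribute $(2\pi)^{N+1}$ up to normalization while the heat-kernel terms vanish exactly), and conclude by Cauchy--Schwarz plus scaling. The only differences are cosmetic: a joint space-time bump versus a product of one-dimensional bumps, and an explicit case split on large $r$ versus absorbing that case into the choice of the time-support parameter $h$.
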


\begin{proof}
Since $u$ is Gaussian, the conditional variance in \eqref{Eq:SHE_SLND} is the squared $L^2(\P)$-distance 
from $u_1(t, x)$ to the subspace generated by 
$u_1(t^1, x^1), \dots, u_1(t^n, x^n)$ in $L^2(\P)$, that is,
\[ \mathrm{Var}\left(u_1(t, x) | u_1(t^1, x^1), \dots, u_1(t^n, x^n)\right) 
= \inf_{a_1, \dots, a_n \in \R} \E\Bigg[ \bigg( u_1(t, x) - \sum_{j=1}^n a_j u_1(t^j, x^j)\bigg)^2 \Bigg]. 
\]
Therefore, it suffices to show that there exists a positive constant $C$ such that for all $n \ge 1$, for all $(t, x)$, $(t^1, x^1)$, $\dots$, $(t^n, x^n) \in [a, a'] \times [-b, b]^N$, 
and for all $a_1, \dots, a_n \in \R$, we have
\[\E\Bigg[ \bigg( u_1(t, x) - \sum_{j=1}^n a_j u_1(t^j, x^j) \bigg)^2 \Bigg] \ge Cr^{2-\beta},\]
where
\[r = \min_{1 \le j \le n}(|t - t^j|^{1/2} \vee |x - x^j|). \]
From \eqref{Eq:SHE_var} and \eqref{FT_G}, we have
\begin{align}
\label{Eq:var_integral}
&\quad \E\Bigg[ \bigg( u_1(t, x) - \sum_{j=1}^n a_j u_1(t^j, x^j) \bigg)^2 \Bigg]\\
& = C \int_{\R} d\tau \int_{\R^N} d\xi \, 
\bigg|e^{-i\langle \xi, x\rangle}(e^{-i\tau t} - e^{-t |\xi|^2}) 
- \sum_{j=1}^n a_j e^{-i\langle \xi, x^j\rangle}(e^{-i\tau t^j} - e^{-t^j |\xi|^2})\bigg|^2 \frac{|\xi|^{\beta-N}}{|\xi|^{4} + |\tau|^2}.\notag
\end{align}

Let $M$ be such that $|t - s|^{1/2} \vee |x-y| \le M$ for all $(t, x), (s, y) \in [a,a'] \times [-b, b]^N$.
Let $h = \min\{a/M^2, 1\}$.
Let $\varphi: \R \to \R$ and $\psi: \R^N \to \R$ be nonnegative smooth test functions  
that vanish outside the interval $(-h, h)$ and the unit ball respectively and 
satisfy $\varphi(0) = \psi(0) = 1$.
Let $\varphi_r(\tau) = r^{-2} \varphi(r^{-2}\tau)$ 
and $\phi_r(\xi) = r^{-N} \psi(r^{-1} \xi)$.
Consider the integral
\begin{align*}
I := 
\int_{\R} d\tau \int_{\R^N} d\xi \bigg[e^{-i\langle \xi, x\rangle}(e^{-i\tau t} - e^{-t |\xi|^2}) 
- \sum_{j=1}^n a_j e^{-i\langle \xi, x^j\rangle}(e^{-i\tau t^j} - e^{-t^j |\xi|^2})\bigg]
e^{i\langle \xi, x\rangle} e^{i\tau t} \widehat{\varphi}_r(\tau) \widehat{\psi}_r(\xi).
\end{align*}
By inverse Fourier transform, we have
\begin{align*}
I = (2\pi)^{1+N}\bigg[\varphi_r(0) \psi_r(0) - \varphi_r(t) (p_t \ast \psi_r)(0)
- \sum_{j=1}^n a_j \Big( \varphi_r(t-t^j) \psi_r(x-x^j) - 
\varphi_r(t) (p_{t^j} \ast \psi_r)(x-x^j)\Big)\bigg],
\end{align*}
where $p_t(x) = G(t, x)$ is the heat kernel.
By the definition of $r$, $|t-t^j| \ge r^2$ or $|x-x^j| \ge r$ for each $j \in \{1, \dots, n\}$, 
thus $\varphi_r(t-t^j) \psi_r(x-x^j) = 0$. Moreover, since $t/r^2 \ge a/M^2 \ge h$, 
we have $\varphi_r(t) = 0$ and hence
\begin{equation}\label{Eq:I}
I = (2\pi)^{1+N} r^{-2-N}.
\end{equation}
On the other hand, by the Cauchy--Schwarz inequality and \eqref{Eq:var_integral},
\begin{align*}
I^2 \le C\, \E\Bigg[ \bigg( u_1(t, x) - \sum_{j=1}^n a_j u_1(t^j, x^j) \bigg)^2 \Bigg] 
\int_{\R} \int_{\R^N} \big|\widehat{\varphi}_r(\tau) \widehat{\psi}_r(\xi)\big|^2 
\big(|\xi|^{4}+|\tau|^2\big) |\xi|^{N-\beta} d\tau \, d\xi.
\end{align*}
Note that $\widehat{\varphi}_r(\tau) = \widehat{\varphi}(r^2 \tau)$ and 
$\widehat{\psi}_r(\xi) = \widehat{\psi}(r\xi)$. Then by scaling,
\begin{align*}
\int_{\R} \int_{\R^N} \big|\widehat{\varphi}_r(\tau) \widehat{\psi}_r(\xi)\big|^2 
\big(|\xi|^{4}+|\tau|^2\big) |\xi|^{N-\beta} d\tau \, d\xi
 = r^{-6+\beta-2N} \int_{\R} \int_{\R^N} \big|\widehat{\varphi}(\tau) \widehat{\psi}(\xi)\big|^2 
\big(|\xi|^{4}+|\tau|^2\big)|\xi|^{N-\beta} d\tau \, d\xi.
\end{align*}
The last integral is finite since $\widehat{\varphi}$ and $\widehat{\psi}$ are 
rapidly decreasing functions. It follows that
\begin{equation}\label{Eq:I^2}
I^2 \le C_0 r^{-6 +\beta -2N} \,
\E\Bigg[ \bigg( u_1(t, x) - \sum_{j=1}^n a_j u_1(t^j, x^j) \bigg)^2 \Bigg]
\end{equation}
for some finite constant $C_0$.
Combining \eqref{Eq:I} and \eqref{Eq:I^2}, we get that
\[ \E\Bigg[ \bigg( u_1(t, x) - \sum_{j=1}^n a_j u_1(t^j, x^j) \bigg)^2 \Bigg] \ge (2\pi)^{2+2N}C_0^{-1} r^{2-\beta}. \]
The proof is complete.
\end{proof}

We have shown that $u$ satisfies condition (A) on any compact interval $T$ in $(0, \infty) \times \R^N$.
Therefore, the following result is a direct consequence of Theorems \ref{Th:joint_cont_LT}, \ref{Thm:Holder_LT},
and \ref{Th:Holder}.

\begin{corollary}\label{cor:LT}
Suppose $d < Q := \frac{2(2+N)}{2-\beta}$ and $T$ is any compact interval in $(0, \infty) \times \R^N$.
Then $u(t, x)$ has a jointly continuous local time $L(z, T)$ on $T$ satisfying the H\"older conditions 
\eqref{Eq:Holder_LT}, \eqref{Eq:Holder1} and \eqref{Eq:Holder2}.
\end{corollary}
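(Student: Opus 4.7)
The plan is essentially to verify that the solution $u$ satisfies condition (A) on $T$ with the appropriate Hurst vector $H$ and then invoke the general results of Sections \ref{sect:cont} and \ref{sect:Holder_cond} as black boxes. Since each component $u_j$ is an i.i.d.\ copy of the scalar Gaussian random field $u_1$ indexed by $(t,x)\in(0,\infty)\times\R^N$, the field $u$ has the required product structure \eqref{def:X}, with the parameter space being $\R_+\times\R^N$ of dimension $N+1$.

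First, I would identify the Hurst exponents. Reading off the anisotropic metric \eqref{rho}, the effective Hurst indices are $H_0=(2-\beta)/4$ in the time variable and $H_1=\cdots=H_N=(2-\beta)/2$ in each spatial variable, so that $\rho((t,x),(s,y))=|t-s|^{H_0}+\sum_{j=1}^N|x_j-y_j|^{H_j}$ is exactly of the form \eqref{D:rho}. Then $Q=\sum (1/H_j)=\frac{4}{2-\beta}+N\cdot\frac{2}{2-\beta}=\frac{2(2+N)}{2-\beta}$, matching the quantity in the statement. Condition (A1) on $T$ follows directly from the two-sided bound \eqref{Eq:SHE_incre} (which holds uniformly on any compact interval in $(0,\infty)\times\R^N$), and condition (A2) is precisely the content of Proposition \ref{Prop:SHE_SLND}.

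With (A) verified on $T$ and the hypothesis $d<Q$ in force, the three conclusions follow by immediate invocation of earlier results. Existence of a jointly continuous local time $L(z,T)$ on $T$ comes from Theorem \ref{Th:joint_cont_LT}. The local H\"older condition \eqref{Eq:Holder_LT} (holding for $L(x,\cdot)$-a.e.\ $(t,x)\in T$) is the content of Theorem \ref{Thm:Holder_LT}, and the local and uniform H\"older conditions \eqref{Eq:Holder1} and \eqref{Eq:Holder2} on the supremum $L^*$ are given by Theorem \ref{Th:Holder}. All three theorems apply verbatim because their hypotheses --- condition (A) on $T$ and $d<Q$ --- are now satisfied.

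Since the verification of (A1) and (A2) has been carried out above (in \eqref{Eq:SHE_incre} and Proposition \ref{Prop:SHE_SLND}), there is no genuine obstacle remaining; the only point worth a brief sentence in the final write-up is noting that the dimension of the parameter space is $N+1$ rather than $N$, so that the $Q$ appearing in Theorems \ref{Thm:Holder_LT} and \ref{Th:Holder} is indeed $\frac{2(2+N)}{2-\beta}$ and not $\frac{2(1+N)}{2-\beta}$ or something similar. Everything else is bookkeeping.
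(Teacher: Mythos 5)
Your proposal is correct and follows exactly the route the paper takes: verify condition (A) on $T$ with $H_0=(2-\beta)/4$ in time and $H_j=(2-\beta)/2$ in space (so that $Q=\frac{2(2+N)}{2-\beta}$), using \eqref{Eq:SHE_incre} for (A1) and Proposition \ref{Prop:SHE_SLND} for (A2), and then cite Theorems \ref{Th:joint_cont_LT}, \ref{Thm:Holder_LT} and \ref{Th:Holder}. Nothing is missing.
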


Finally, we consider the level sets $u^{-1}(z) \cap T = \{ (t, x) \in T : u(t, x) = z \}$, where $z \in \R^d$.
Recall from Section \ref{sect:level_set} that $u^{-1}(z) \cap T = \varnothing$ a.s.~if $Q < d$. The same 
is true when $Q = d$, which was proved by Dalang, Mueller and Xiao \cite{DMX17}. 

If $d < Q$, we are able to obtain a precise result for the level sets of $u$.
The following theorem determines the exact Hausdorff measure function for the level sets.

\begin{theorem}\label{Thm:SHE_Haus_meas}
Suppose $d < Q$. Let $T$ be a compact interval in $(0, \infty) \times \R^N$. 
Then there exists a constant $C > 0$ such that for any $z \in \R^d$,
\begin{equation}\label{Eq:SHE_Haus_meas}
CL(z, T) \le \mathcal{H}_\rho^\varphi(u^{-1}(z) \cap T) < \infty \quad \text{a.s.}
\end{equation}
where $\varphi(r) = r^{Q-d} (\log\log(1/r))^{d/Q}$.
\end{theorem}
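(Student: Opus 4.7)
The lower bound $CL(z,T)\le\mathcal{H}^{\varphi}_{\rho}(u^{-1}(z)\cap T)$ is an immediate consequence of Theorem~\ref{Thm:Haus_meas_level_set}. The bound \eqref{Eq:SHE_incre} is exactly condition~(A1), and Proposition~\ref{Prop:SHE_SLND} verifies condition~(A2), for $u$ on any compact interval $T\subset(0,\infty)\times\R^N$. The anisotropy exponents read off from the parabolic metric \eqref{rho} are $H_0=(2-\beta)/4$ for the time coordinate and $H_j=(2-\beta)/2$ for each spatial coordinate, so $Q=\sum_{j=0}^{N}1/H_j=2(N+2)/(2-\beta)$, as asserted. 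Applying Theorem~\ref{Thm:Haus_meas_level_set} with this $\rho$ and $\varphi$ yields the lower bound directly.

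For the finiteness $\mathcal{H}^{\varphi}_{\rho}(u^{-1}(z)\cap T)<\infty$ a.s., the plan is a random-covering argument in the spirit of Talagrand \cite{T95}, Xiao \cite{X97}, and the proof of Theorem~\ref{Th:Holder}. For each $n\ge 1$ let $\mathscr{B}_n$ be a disjoint cover of $T$ by $\rho$-cubes of radius $2^{-n}$, so that $\#(\mathscr{B}_n)\le C\,2^{nQ}$. A cube $B\in\mathscr{B}_n$ centered at $t^B$ can meet $u^{-1}(z)$ only when $|u(t^B)-z|\le\omega_n(B):=\sup_{s\in B}|u(s)-u(t^B)|$. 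Talagrand's Lemma~2.1, combined with a Borel--Cantelli argument as in Step~(1) of the proof of Theorem~\ref{Th:Holder}, bounds $\omega_n(B)$ pointwise by $C\,2^{-n}(\log\log 2^n)^{1/2}$ almost surely; the uniform boundedness of the Gaussian density of $u(t^B)$ then gives $\P\{|u(t^B)-z|\le r\}\le Cr^d$, and summing over $\mathscr{B}_n$ produces an expected number of ``active'' cubes of order $2^{n(Q-d)}(\log\log 2^n)^{d/2}$.

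The main obstacle is precisely the gap between the exponents $d/2$ and $d/Q$: the product $2^{n(Q-d)}(\log\log 2^n)^{d/2}\cdot\varphi(2^{-n})=(\log\log 2^n)^{d/2+d/Q}$ is unbounded in $n$, so the naive covering falls short of the sharp gauge $\varphi$. The resolution is to bypass the deterministic covering and instead work with the occupation measure $\mu:=L(z,\cdot\cap T)$, which by Corollary~\ref{cor:LT} is a finite Borel measure supported on $u^{-1}(z)\cap T$. The plan is to establish the uniform lower density bound $\limsup_{r\to 0}\mu(B_\rho(t,r))/\varphi(r)\ge c_0>0$ for every $t\in u^{-1}(z)\cap T$ a.s.; once it holds, the density comparison \eqref{Eq:upper_density} with $E=u^{-1}(z)\cap T$ yields $\mathcal{H}^{\varphi}_{\rho}(u^{-1}(z)\cap T)\le c\,c_0^{-1}\mu(T)=c\,c_0^{-1}L(z,T)<\infty$. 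This uniform Chung-type lower bound for $L(z,B_\rho(t,r))$ at \emph{every} point of the random level set is the technical heart of the proof; it is obtained by combining the moment estimates of Propositions~\ref{Prop:mom_LT} and~\ref{Prop:mom_LT_incre} with the small-ball probabilities for $u$ supplied by Assumptions~2.1 and~2.4 of Dalang--Mueller--Xiao~\cite{DMX17} (which $u$ satisfies in the SHE setting), together with a chaining argument parallel to Steps~(2)--(4) of the proof of Theorem~\ref{Th:Holder}.
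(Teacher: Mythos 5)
The lower-bound half of your argument is fine and coincides with the paper's: \eqref{Eq:SHE_incre} gives (A1), Proposition \ref{Prop:SHE_SLND} gives (A2), and Theorem \ref{Thm:Haus_meas_level_set} applies. Your diagnosis of why the naive covering fails (the $(\log\log)^{d/2+d/Q}$ blow-up) is also correct. The gap is in your proposed fix for the finiteness. You reduce everything to the uniform lower density bound $\inf_{t\in u^{-1}(z)\cap T}\overline{D}^{\varphi,\rho}_\mu(t)\ge c_0>0$ a.s., which via \eqref{Eq:upper_density} would indeed give $\mathcal{H}^\varphi_\rho\le c\,c_0^{-1}L(z,T)$ --- but this bound must hold at \emph{every} point of the random level set (a $\mu$-null exceptional set cannot be discarded here, unlike in the lower-bound direction), and the ingredients you cite cannot deliver it. The small-ball estimate \eqref{Eq:SHE_main_est} from \cite{DMX17} fails at each fixed point with probability $\exp(-(\log(1/r_0))^{1/2})$; a union bound over the $\sim 2^{pQ}$ dyadic points of $T$ at scale $2^{-p}$ diverges, so one cannot conclude that the $\log\log$-oscillation (and hence the lower bound $L^*(B_\rho(t,r))\ge c\varphi(r)$ obtained from the occupation density formula) holds at every $t$. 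What \emph{is} available uniformly in $t$ is only the $\log$-scale estimate \eqref{Eq:mod_nondiff}, which would produce the larger gauge $\Phi(r)=r^{Q-d}(\log(1/r))^{d/Q}$, not $\varphi$. So the ``technical heart'' you defer to is precisely the step that does not go through.

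The paper resolves this by staying with a covering argument but making it adaptive. It decomposes $u=u^1+u^2$ with $u^2(t,x)=\E(u(t,x)\mid u(t_0,x_0))$, so that $u^1$ and $u^2$ are independent; it covers the set $R_p'$ of points where $u^1$ has a good scale $r\in[2^{-2p},2^{-p}]$ with $\log\log$-small oscillation by disjoint ``good'' generalized dyadic cubes, and covers the exceptional set $T\setminus R_p'$ (of Lebesgue measure $\le\exp(-\sqrt p/4)\lambda_{N+1}(T)$ on an event of probability tending to $1$) by at most $C2^{2pQ}e^{-\sqrt p/4}$ cubes of the finest order $2p$ carrying the crude $\sqrt{\log}$ oscillation. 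For good cubes the $(\log\log 2^q)^{d/Q}$ in $\varphi$ exactly cancels against $r_{p,I}^d$, giving a contribution $\le C\lambda_{N+1}(I)$, while the bad cubes contribute a vanishing amount because there are so few of them. Crucially, the covering depends only on $u^1$, so conditioning on $\Sigma_1=\sigma(u^1)$ and using the nondegeneracy of $u^2$ yields $\P(\Omega_{p,I}\mid\Sigma_1)\le Cr_{p,I}^d$; this independence device is what legitimizes multiplying the ``cube is selected'' and ``cube meets the level set'' probabilities, a point your sketch also does not address. Fatou's lemma then gives $\E[\mathcal{H}^\varphi_\rho(u^{-1}(z)\cap T)]<\infty$.
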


\begin{remark}
We conjecture that (\ref{Eq:SHE_Haus_meas}) can be strengthened to: There exist positive finite constants 
$C_1$ and $C_2$ such that
\[ C_1L(z, T) \le \mathcal{H}_\rho^\varphi(u^{-1}(z) \cap T) \le C_2 L(z, T) \quad \text{a.s.} \]
\end{remark}

\begin{remark}
Let $\delta((t, x),(s,y)) = |t-s|^{1/2} + |x-y|$ be the parabolic metric in $\R^{1+N}$. Since 
$\rho((t, x), (s, y)) \asymp [\delta((t, x),(s,y))]^{(2-\beta)/2}$, it follows from Theorem \ref{Thm:SHE_Haus_meas} 
that
\[ CL(z, T) \le \mathcal{H}^\psi_\delta(u^{-1}(z)\cap T) < \infty \quad \text{a.s.}
\]
where $\psi(r) = r^{2+N-d(2-\beta)/2} (\log\log(1/r))^{d/Q}$. In particular, the parabolic Hausdorff 
dimension of the level set is $2+N - d(2-\beta)/2$.
\end{remark}

\begin{proof}[Proof of Theorem \ref{Thm:SHE_Haus_meas}]
The lower bound in \eqref{Eq:SHE_Haus_meas} follows immediately from Theorem \ref{Thm:Haus_meas_level_set}.
To prove that the Hausdorff measure is finite, we use the method in Xiao \cite{X97}, which is similar to 
Talagrand's covering argument in \cite{T98}. 
First, we may assume $T = B((t_0, x_0), \eta_0)$, where $\eta_0 > 0$ is small and $(t_0, x_0) \in T$ are fixed. Let
\begin{align*}
u^1(t, x) = u(t, x) - u^2(t, x) \quad \text{and} \quad
u^2(t, x) = \E(u(t, x) | u(t_0, x_0)).
\end{align*}
Then $u^1$ and $u^2$ are independent processes.

For our current proof, it would be easier for us to work with ``cubes'' that are comparable to balls in metric $\rho$ and, 
at the same time, have the nested property of the ordinary dyadic cubes.
For this reason, we are going to use a family of generalized dyadic cubes $\mathscr{Q}$, which can be obtained by 
Theorem 2.1 and Remark 2.2 of \cite{KRS12} applied to the metric space $(T, \rho)$.
More specifically, $\mathscr{Q} = \bigcup_{q=1}^\infty \mathscr{Q}_q$, where $\mathscr{Q}_q = \{ I_{q, \ell} : 
\ell =1, \dots, n_q \}$ are families of Borel subsets of $T$, and there exist constants $c_1, c_2$ such that the 
following properties hold:
\begin{enumerate}
\item[(i)] $T = \bigcup_{\ell=1}^{n_q} I_{q, \ell}$ for each $q \ge 1$;
\item[(ii)] Either $I_{q, \ell} \cap I_{q', \ell'} = \varnothing$ or $I_{q, \ell}
\subset I_{q', \ell'}$ whenever $q \ge q'$, $1 \le \ell \le n_q$, $1 \le \ell' \le n_{q'}$;
\item[(iii)] For each $q, \ell$, there exists $x_{q, \ell} \in T$ such that
$ B_\rho(x_{q,\ell}, c_1 2^{-q}) \subset I_{q,\ell} \subset B_\rho(x_{q,\ell}, c_2 2^{-q})$
and $\{ x_{q,\ell} : 1, \dots, n_q \} \subset \{x_{q+1, \ell} : \ell = 1, \dots, n_{q+1} \}$ for
all $q \ge 1$.
\end{enumerate}
For simplicity, any member of $\mathscr{Q}_q$ will be called a dyadic cube of order $q$.

The main ingredient for the covering argument is the following estimate: 
there exist a finite constant $K_1$ and $\eta_1 > 0$ small such that for all $0 < r_0 < \eta_1$, and all $(t, x) \in T$, we have
\begin{align}
\begin{aligned}\label{Eq:SHE_main_est}
\P\left\{ \exists\, r \in [r_0^2, r_0], \sup_{(s, y) \in B_\rho((t, x), 2c_2r)} |u(t, x) - u(s, y)| \le K_1 r\Big( \log\log \frac{1}{r}\Big)^{-1/Q} \right\}
\ge 1 - \exp\bigg(-\Big(\log\frac{1}{r_0}\Big)^{1/2} \bigg).
\end{aligned}
\end{align}
This is proved for a more general class of Gaussian random fields in Dalang et al.~\cite{DMX17}.

Moreover, by Lemma 5.3 and Lemma 7.5 of \cite{DMX17}, there exists a finite constant $K_2$ such that for all $(t, x), (s, y) \in T$, 
\begin{equation}\label{Eq:SHE_lemma4}
|u^2(t, x) - u^2(s, y)| \le K_2 \big(|t-s| + \sum_{j=1}^N|x_j-y_j|\big) |u(t_0, x_0)|.
\end{equation}
Let 
\begin{align*}
R_p = \Bigg\{ (t, x) \in T : \,&\exists\, r \in [2^{-2p}, 2^{-p}] \text{ such that }
\sup_{(s, y) \in B_\rho((t, x), 2c_2r)} |u(t, x) - u(s, y)| \le K_1r \left( \log\log\frac{1}{r}\right)^{-1/Q} \Bigg\}.
\end{align*}
Consider the events
\begin{align*}
\Omega_{p, 1} &= \Big\{ \omega : \lambda_{N+1}(R_p) \ge \lambda_{N+1}(T) (1 - \exp(-\sqrt{p}/4)) \Big\},\\
\Omega_{p, 2} &= \Big\{ \omega : |u(t_0, x_0)| \le 2^{pb} \Big\},
\end{align*}
where $b > 0$ is chosen and fixed such that $\frac{2}{2-\beta} - b > 1$.
By \eqref{Eq:SHE_main_est}, 
\begin{align}\label{E:P:(t,x)inRp}
\P\{(t, x) \in R_p\} \ge 1 - \exp(-\sqrt{p/2}).
\end{align}
Then by Markov's inequality, Fubini's theorem and \eqref{E:P:(t,x)inRp}, we have
\begin{align*}
\P(\Omega_{p,1}^c) &= \P\{ \lambda_{N+1}(T\setminus R_p) \ge \lambda_{N+1}(T) \exp(-\sqrt{p}/4) \}\\
&\le \frac{1}{\lambda_{N+1}(T) \exp(-\sqrt{p}/4)} \E[\lambda_{N+1}(T\setminus R_p)]\\
&= \frac{1}{\lambda_{N+1}(T) \exp(-\sqrt{p}/4)} \int_T \P\{(t,x) \not\in R_p\} \lambda_{N+1}(dt\,dx) \\
&\le \frac{1}{\lambda_{N+1}(T) \exp(-\sqrt{p}/4)} \exp(-\sqrt{p/2}).
\end{align*}
Hence, it follows that $\sum_{p=1}^\infty \P(\Omega_{p,1}^c) < \infty$.
Moreover, it is easy to see that $\sum_{p=1}^\infty \P(\Omega_{p,2}^c) < \infty$.
Consider the event
\begin{align*}
\Omega_{p, 3} = \bigg\{ \omega : \forall \, I \in \mathscr{Q}_{2p}, \sup_{(t, x), (s, y) \in I} |u(t, x) - u(s, y)|
 \le K_2 2^{-2p} (\log 2^{2p})^{1/2} \bigg\}.
\end{align*}
By Lemma 2.1 of Talagrand \cite{T95} (see also Lemma 3.1 in \cite{DMX17}), we see that for $K_2$ and $p$ large, 
\[ \P\left\{\sup_{(t, x), (s, y) \in I} |u(t, x) - u(s, y)| \le K_2 2^{-2p} (\log 2^{2p})^{1/2} \right\}
\le \exp\left( -\left(\frac{K_2}{c_2} \right)^2 p \right). \]
Since the cardinality of the family $\mathscr{Q}_{2p}$ is at most $C 2^{2pQ}$, we have
$\sum_{p=1}^\infty \P(\Omega_{p,3}^c) < \infty$ provided $K_2$ is chosen to be a sufficiently large constant.
Let $\Omega_p = \Omega_{p,1} \cap \Omega_{p,2} \cap \Omega_{p,3}$.
Then
\begin{align}\label{E:P_Omega}
\P(\Omega^*) = 1, \quad \text{where } \Omega^* := \bigcup_{\ell \ge 1} \bigcap_{p \ge \ell} \Omega_p.
\end{align}
Moreover, we define
\begin{align*}
R_p' = \Bigg\{ (t, x) \in T : \,\exists\, r \in [2^{-2p}, 2^{-p}] \text{ such that }
 \sup_{(s, y) \in B_\rho((t, x), 2c_2r)} |u^1(t, x) - u^1(s, y)| \le 2K_1r \left( \log\log\frac{1}{r}\right)^{-1/Q} \Bigg\}
\end{align*}
and the event
\begin{align*}
\Omega_{p, 4} = \left\{ \omega : \lambda_{N+1}(R_p') \ge \lambda_{N+1}(T)(1-\exp(-\sqrt{p}/4)) \right\}.
\end{align*}
Note that \eqref{Eq:SHE_lemma4} implies that $R_p \subset R_p'$ on $\Omega_{p, 3}$ for $p$ large enough and hence
\begin{align}\label{E:Omega_p4}
\Omega_{p, 1} \cap \Omega_{p, 3} \subset \Omega_{p, 4}.
\end{align}

We are going to construct a random covering for the level set $u^{-1}(z) \cap T$. For any $p \ge 1$ and $(t, x) \in T$, 
let $I_p(t, x) \in \mathscr{Q}_p$ be the unique dyadic cube of order $p$ containing $(t, x)$. 
We say that $I_q(t, x)$ is a good dyadic cube of order $q$ if it satisfies the following property:
\begin{equation}\label{Eq:good_dyadic_cube}
\sup_{(s, y), (s',y') \in I_q(t, x)} |u^1(s, y) - u^1(s',y')| \le 8 K_1 2^{-q}(\log\log 2^q)^{-1/Q}. 
\end{equation}
For each $(t, x) \in R_p'$, there is some $r \in [2^{-q}, 2^{-q+1}]$ with $p+1 \le q \le 2p$ such that
\begin{align*}
\sup_{(s, y) \in B_\rho((t, x), 2c_2r)}|u^1(t, x) - u^1(s, y)| \le 2K_1 r\left(\log\log\frac{1}{r}\right)^{-1/Q}.
\end{align*}
By property (iii) above, $I_q(t, x)$ is contained in some ball $B_\rho((t^*,x^*),c_2 2^{-q})$, and thus by triangle 
inequality, we have
$I_q(t,x) \subset B_\rho((t,x),2c_2 2^{-q})$ and
\begin{align*}
&\sup_{(s, y), (s', y') \in I_q(t, x)}|u^1(s, y) - u^1(s', y')|\\
&\le \sup_{(s, y) \in B_\rho((t, x), 2c_2r)}|u^1(s, y) - u^1(t, x)| + \sup_{(s', y') \in B_\rho((t, x), 2c_2r)}|u^1(t, x) - u^1(s', y')|\\
& \le 4K_1 r\left(\log\log\frac{1}{r}\right)^{-1/Q} \le 8K_1 2^{-q}(\log\log 2^q)^{-1/Q}.
\end{align*}
Hence, $I_q(t, x)$ is a good dyadic cube of order $q$.
By property (ii), we obtain in this way a family $\mathscr{G}_p^1$ of disjoint dyadic cubes that cover $R_p'$.
On the other hand, we let $\mathscr{G}_p^2$ be the family of dyadic cubes in $T$ of order $2p$ that are not 
contained in any cube of $\mathscr{G}_p^1$. In particular, the cubes in $\mathscr{G}_p^2$ are contained in $T \setminus R_p'$. 
Let $\mathscr{G}_p = \mathscr{G}_p^1 \cup 
\mathscr{G}_p^2$. Note that $\mathscr{G}_p$ depends only on the random field $\{ u^1(t, x), (t, x) \in T\}$.

For each dyadic cube $I \in \mathscr{Q}$, choose a fixed point in $I \cap T$ and label it by $(t_I, x_I)$.
For any $I \in \mathscr{Q}_q$ of order $q$, where $p \le q \le 2p$, consider the event
\[ \Omega_{p,I} = \{ \omega :  |u(t_I, x_I) - z| \le 2 r_{p,I} \} \]
where
\[ r_{p,I} = \begin{cases}
8 K_1 2^{-q}(\log\log2^q)^{-1/Q} & \text{if } I \in \mathscr{G}_p^1 \text{ and } I \text{ is of order } q,\\
K_2 2^{-2p} (\log 2^{2p})^{1/2} & \text{if } I \in \mathscr{G}_p^2.
\end{cases} \]
Let $\mathscr{F}_p$ be the subcover of $\mathscr{G}_p$ (depending on $\omega$) defined by
\[ \mathscr{F}_p(\omega) = \{ I \in \mathscr{G}_p(\omega) : \omega \in \Omega_{p,I} \}. \]

We claim that for $p$ large, on the event $\Omega_p$, $\mathscr{F}_p$ covers the set $u^{-1}(z) \cap T$.
Suppose $\Omega_p$ occurs and $(t, x) \in u^{-1}(z) \cap T$. 
Since $\mathscr{G}_p$ covers $T$, the point $(t, x)$ is 
contained in some dyadic cube $I$ and either $I \in \mathscr{G}_p^1$ or $I \in \mathscr{G}_p^2$. 

Case 1: if $I \in \mathscr{G}_p^1$, then $I = I_q(t, x)$ is a good dyadic cube of order $q$, where $p \le q \le 2p$, 
and \eqref{Eq:good_dyadic_cube} holds. 
Recall that $I$ is contained in some ball $B_\rho(c_2 2^{-q})$.
Since $\Omega_{p,2}$ occurs, it follows that from \eqref{Eq:SHE_lemma4} and \eqref{Eq:good_dyadic_cube} that
\begin{align*}
|u(t_I, x_I) - z| &\le |u^1(t_I, x_I) - u^1(t, x)| + |u^2(t_I, x_I) - u^2(t, x)|\\
& \le 8 K_12^{-q}(\log\log2^q)^{-1/Q} + K_2 \Big( (2 c_2^{\frac{4}{2-\beta}}+2N c_2^{\frac{2}{2-\beta}}) 2^{-q\frac{2}{2-\beta}} \Big) 2^{pb}.
\end{align*}
This is $\le 2 r_{p, I}$ for $p$ large because $b$ is chosen such that $\frac{2}{2-\beta} - b > 1$.
Hence $I \in \mathscr{F}_p$.

Case 2: if $I \in \mathscr{G}_p^2$, since $\Omega_{p,3}$ occurs, we have
\[ |u(t_I, x_I) - z| = |u(t_I, x_I) - u(t, x)| \le K_2 2^{-2p}p^{1/2}. \] 
In this case, $I \in \mathscr{F}_p$. Hence the claim is proved.

Let $\Sigma_1$ be the $\sigma$-algebra generated by $\{ u^1(t, x) : (t, x) \in T\}$.
To estimate the conditional probability $\P(\Omega_{p,I}|\Sigma_1)$, we use the conditional variance formula and 
\eqref{Eq:SHE_incre} to get 
that for all $(t, x) \in T = B_\rho((t_0, x_0), \eta_0)$, 
\begin{align*}
\mathrm{Var}(u^2(t, x)) &=
\mathrm{Var}(\E(u(t, x)|u(t_0, x_0))) = \mathrm{Var}(u(t, x)) - \E[\mathrm{Var}(u(t, x)|u(t_0, x_0))]\\
& \ge \inf_{(t, x) \in T} \mathrm{Var}(u(t, x)) - C_2 \sup_{(t, x) \in T} \rho^2((t, x),(t_0, x_0))
\end{align*}
which is bounded from below by a positive constant provided $\eta_0 > 0$ is small enough.
This variance lower bound and the fact that the process $u^2$ is Gaussian imply that there is a 
constant $C < \infty$ such that for all $(t, x) \in T$, $v \in \R^d$ and $r > 0$, $\P\{|u^2(t, x) - v| \le r\} \le C r^d$.
Since $u^1$ and $u^2$ are independent, we have
\begin{equation}\label{Eq:cond_prob}
\P(\Omega_{p,I}|\Sigma_1) \le C r_{p,I}^d.
\end{equation}

Now, we estimate the expected value of $\mathcal{H}_\rho^\varphi(u^{-1}(z)\cap T)$.
Let $q\{I\}$ denote the order of $I \in \mathscr{F}_p$. 
By conditioning, \eqref{E:Omega_p4} and \eqref{Eq:cond_prob},
\begin{align*}
\E\left[ {\bf 1}_{\Omega_p} \sum_{I \in \mathscr{F}_p} \varphi(2c_2 2^{-q\{I\}}) \right]
&\le\E\left[ {\bf 1}_{{\Omega}_{p, 4}} \sum_{q=p}^{2p}\sum_{I \in \mathscr{Q}_q} \varphi(2c_22^{-q}) 
{\bf 1}_{\{ I \in \mathscr{G}_p \}} {\bf 1}_{\Omega_{p,I}} \right]\\
& =\E\left[ {\bf 1}_{{\Omega}_{p, 4}} \sum_{q=p}^{2p}\sum_{I \in \mathscr{Q}_q} \varphi(2c_22^{-q}) 
{\bf 1}_{\{ I \in \mathscr{G}_p \}} \E\left({\bf 1}_{\Omega_{p,I}}|\Sigma_1\right)\right]\\
& \le C\, \E \left[ {\bf 1}_{{\Omega}_{p, 4}}\sum_{q=p}^{2p} \sum_{I \in \mathscr{Q}_q}
  \varphi(2c_22^{-q}) r_{p,I}^d {\bf 1}_{\{I \in \mathscr{G}_p\}} \right].
\end{align*}
If $I \in \mathscr{G}_p^1$ is of order $q$, then 
\[ \varphi(2c_22^{-q}) r_{p,I}^d \le C 2^{-q(Q-d)}(\log\log2^q)^{d/Q} 2^{-qd}(\log\log2^q)^{-d/Q} \le C \lambda_{N+1}(I), \]
and these $I$'s are disjoint sets contained in $T$.
If $I \in \mathscr{G}_p^2$, then 
\[ \varphi(2c_22^{-2p}) r_{p, I}^d \le C 2^{-2pQ} (\log\log2^{2p})^{d/Q} p^{d/2}. \]
Note that there are at most $C 2^{2pQ}\exp(-\sqrt{p}/4)$ many such $I$'s on the event $\Omega_{p, 4}$ since 
$T \setminus R_p'$ has Lebesgue measure $\le \exp(-\sqrt{p}/4)$ and each $I \in \mathscr{G}_p^2$ has 
Lebesgue measure $\sim C2^{-2pQ}$.
It follows that
\begin{align*}
\E\left[ {\bf 1}_{\Omega_p} \sum_{I \in \mathscr{F}_p} \varphi(2^{-q\{I\}}) \right]
& \le C\, \E\left[ \sum_{I \in \mathscr{G}_p^1} \lambda_{N+1}(I) + {\bf 1}_{\Omega_{p, 4}} \sum_{I \in \mathscr{G}_p^2} 
2^{-pQ}(\log\log2^{2p})^{d/Q} p^{d/2}  \right]\\
& \le C\left(\lambda_{N+1}(T) + (\log 2p)^{d/Q} p^{d/2}\exp(-\sqrt{p}/4)\right)
\end{align*}
provided $p$ is large. 
Recall that $\mathscr{F}_p$ is a cover for $u^{-1}(z) \cap T$ on $\Omega_p$ for large $p$ and 
each $I$ is contained in a ball of radius $c_2 2^{-q\{I\}}$ in metric $\rho$.
Therefore, by \eqref{E:P_Omega} and Fatou's lemma, 
\begin{align*}
\E\left[ \mathcal{H}^\varphi_\rho(u^{-1}(z) \cap T) \right]
&= \E\left[ {\bf 1}_{\Omega^*}\, \mathcal{H}^\varphi_\rho(u^{-1}(z) \cap T) \right]\\
& \le \liminf_{p \to \infty} \E\left[  {\bf 1}_{\Omega_p} \sum_{I \in \mathscr{F}_p} \varphi(2c_22^{-q\{I\}}) \right]
\le C\lambda_{N+1}(T) < \infty.
\end{align*}
This completes the proof of Theorem \ref{Thm:SHE_Haus_meas}.
\end{proof}

Finally, we consider the solution $\{ v(t, x) = (v_1(t, x), \dots, v_d(t, x))^T, t \ge 0, x \in \R^N \}$ of the system
\begin{align}\label{Eq:CSHE}
\begin{cases}
\displaystyle\frac{\partial}{\partial t} v_i(t, x) = \Delta v_i(t, x) + \sum_{j=1}^d A_{ij} \dot{W}_j(t, x), & t \ge 0, x \in \R^N,\\
v_i(0, x) = 0, & i = 1, \dots, d,
\end{cases}
\end{align}
where $A$ is a non-random $d \times d$ invertible matrix and $(\dot{W}_1, \dots, \dot{W}_d)$ is the $d$-dimensional 
Gaussian noise as defined in \eqref{Eq:SHE}. 
The solution is given by the Gaussian random field
\begin{align*}
v_i(t, x) = \int_0^t \int_{\R^N} G(t-s, x, y) \sum_{j=1}^d A_{ij} W_j(ds, dy), \quad i = 1, \dots, d.
\end{align*}
Hence, $v = Au$, where $u(t, x) = (u_1(t, x), \dots, u_d(t, x))^T$ is the solution of \eqref{Eq:SHE}.
In general, $v(t, x)$ has non-i.i.d.~components. Although our main results, which are based on the i.i.d.~setting, 
cannot be applied directly, we can still make use of the relation $v = Au$ to obtain properties for the local times of 
$v$ from those of the local times of $u$.

Let $T$ be a compact interval in $(0, \infty) \times \R^N$. 
Suppose that $d < Q = \frac{2(2+N)}{2-\beta}$.
Since $u(t, x)$ has a local time $L_u$ on $T$ (by Corollary \ref{cor:LT}) and $A$ is an invertible matrix, it follows 
from the relation $v = Au$ that the occupation measure $\mu^v_T(\cdot) = \lambda_{1+N}\{(t, x) \in T : v(t, x) \in \cdot\, \}$ 
is absolutely continuous with respect to the Lebesgue measure $\lambda_d$ on $\R^d$. 
Therefore, $v(t, x)$ also has a local time $L_v$ on $T$.
By the occupation density formula \eqref{Eq:ODF} and the relation $v = Au$, we obtain the following expression 
for the local time of $v$:
\begin{align}\label{Lv-Lu}
L_v(z, I) = |\det A|^{-1} L_u(A^{-1}z, I),
\end{align}
where $I$ is any interval in $T$.
As a result, all the properties of $L_v(z, I)$ including joint continuity and H\"older conditions can be deduced 
from those of $L_u(z, I)$. Moreover, since $v^{-1}(z) = u^{-1}(A^{-1}z)$, Theorem \ref{Thm:SHE_Haus_meas} and 
\eqref{Lv-Lu} imply that
\begin{align*}
C |\det A| \cdot L_v(z, T) \le \mathcal{H}^\varphi_\rho(v^{-1}(z) \cap T) < \infty \quad \text{a.s.}
\end{align*}
Hence, the exact Hausdorff measure function for the level sets of $v$ is the same as that of $u$.

\medskip

{\bf Acknowledgements.} 
The authors wish to thank Professor Davar Khoshnevisan for stimulating discussions and encouraging the authors to publish this paper.
Y. Xiao was supported in part by the NSF grant DMS-2153846.

\bigskip

\textsc{Cheuk Yin Lee}: School of Science and Engineering, The Chinese University of Hong Kong, Shenzhen, China.\\
E-mail address: \texttt{leecheukyin@cuhk.edu.cn}\\

\textsc{Yimin Xiao}: Department of Statistics and Probability, C413 Wells Hall, Michigan State University, East Lansing, MI 48824, United States.\\
E-mail address: \texttt{xiao@stt.msu.edu}\\

\end{document}